\numberwithin{equation}{section}
\title{Testing versus estimation of graph properties, revisited}
\author{
Lior Gishboliner\thanks{ETH Z\"urich. Email: lior.gishboliner@math.ethz.ch. Supported by SNSF grant 200021\_196965.} 
\and 
Nick Kushnir\thanks{School of Mathematics, Tel Aviv University, Tel Aviv 69978, Israel. Email: nickkushnir$@$mail.tau.ac.il.} 
\and 	
Asaf Shapira \thanks{School of Mathematics, Tel Aviv University, Tel Aviv 69978, Israel. Email: asafico$@$tau.ac.il. Supported in part by ERC Consolidator Grant 863438 and NSF-BSF Grant 20196.} 
}
\date{\today}
\theoremstyle{plain}
\newtheorem{theorem}{Theorem}[section]
\newtheorem{lemma}[theorem]{Lemma}
\newtheorem{claim}[theorem]{Claim}
\newtheorem{proposition}[theorem]{Proposition}
\newtheorem{corollary}[theorem]{Corollary}
\newtheorem{problem}[theorem]{Problem}
\newtheorem{remark}[theorem]{Remark}
\newtheorem{definition}[theorem]{Definition}
\def\moverlay{\mathpalette\mov@rlay}
\def\mov@rlay#1#2{\leavevmode\vtop{%
   \baselineskip\z@skip \lineskiplimit-\maxdimen
   \ialign{\hfil$\m@th#1##$\hfil\cr#2\crcr}}}
\newcommand{\charfusion}[3][\mathord]{
    #1{\ifx#1\mathop\vphantom{#2}\fi
        \mathpalette\mov@rlay{#2\cr#3}
      }
    \ifx#1\mathop\expandafter\displaylimits\fi}
\renewenvironment{proof}[1][\proofname]
{\par\pushQED{\qed}
	\normalfont\topsep6\p@\@plus6\p@\relax\trivlist
	\item[\hskip\labelsep\bfseries#1\@addpunct{.}]
	\ignorespaces}
{\popQED\endtrivlist\@endpefalse}
\newcommand{\eps}{\varepsilon}
\newcommand{\g}{\gamma}
\definecolor{RED}{rgb}{1,0,0}\definecolor{BLUE}{rgb}{0,0,1} %DIF PREAMBLE
\begin{document}
\date{}
\maketitle

\begin{abstract}

A graph $G$ on $n$ vertices is $\varepsilon$-far from property ${\cal P}$ if one should add/delete at least $\varepsilon n^2$ edges to turn
$G$ into a graph satisfying ${\cal P}$. A {\em distance estimator} for ${\cal P}$ is an algorithm that given $G$ and $\alpha, \varepsilon >0$ distinguishes between the case that $G$ is $(\alpha-\varepsilon)$-close to $\mathcal{P}$ and the case that $G$ is $\alpha$-far from $\mathcal{P}$.
If ${\cal P}$ has a distance estimator whose query complexity depends only on $\varepsilon$, then ${\cal P}$ is said to be {\em estimable}.

Every estimable property is clearly also testable, since testing corresponds to estimating with $\alpha=\varepsilon$.
A central result in the area of property testing is the Fischer--Newman theorem, stating that an
inverse statement also holds, that is, that every testable property is in fact
estimable. The proof of Fischer and Newmann was highly ineffective, since it incurred a tower-type loss when transforming
a testing algorithm for ${\cal P}$ into a distance estimator. This raised the natural problem, studied recently by Fiat--Ron and by Hoppen--Kohayakawa--Lang--Lefmann--Stagni,
whether one can find a transformation with a polynomial loss.
We obtain the following results.
\begin{itemize}
\item We show that if ${\cal P}$ is hereditary, then one can turn a tester for ${\cal P}$ into a distance estimator with an exponential loss.
  This is an exponential improvement over the result of Hoppen et. al., who obtained a transformation with a double exponential loss.
\item We show that for every ${\cal P}$, one can turn a testing algorithm for ${\cal P}$ into a distance estimator with a double exponential loss. This improves over the transformation of Fischer--Newman that incurred a tower-type loss.
\end{itemize}
Our main conceptual contribution in this work is that we manage to turn the approach of Fischer--Newman, which was inherently ineffective, into an efficient one. On the technical level, our main contribution is in establishing certain properties of Frieze--Kannan Weak Regular partitions that are of independent interest.
\end{abstract}

%\newpage

%\setcounter{page}{1}

\section{Introduction}\label{sec:intro}
%I just state meanwhile the main result

\subsection{Background on graph property testing}

Property testers are fast randomized algorithms that can distinguish between objects satisfying some predetermined property
${\cal P}$ and those that are $\varepsilon$-far from satisfying ${\cal P}$. In most cases, $\varepsilon$-far means that an
$\varepsilon$-proportion of the object's representation needs to be changed in order to obtain a new object satisfying ${\cal P}$.
Hence, testing for ${\cal P}$ is a relaxed version of the classical decision problem which asks to decide whether an object satisfies ${\cal P}$. In this paper we study properties of graphs in the so called {\em adjacency matrix model} (which is also sometimes referred to as the {\em dense graph model}). This is arguably one of the most well studied models in the area of property testing.
The reader is referred to \cite{Gol} for more background and references on property testing.

We now introduce the model of testing graph properties in the adjacency matrix model.
A graph property ${\cal P}$ is a family of graphs closed under isomorphism.
A graph $G$ on $n$ vertices is $\varepsilon$-far from ${\cal P}$ if one should add/delete at least $\varepsilon n^2$ edges to turn
$G$ into a graph satisfying ${\cal P}$. If $G$ is not $\varepsilon$-far from ${\cal P}$ then it is $\varepsilon$-close to ${\cal P}$.
A {\em tester} for ${\cal P}$ is a randomized algorithm that given $\varepsilon >0$ distinguishes with high probability (say, $2/3$)
between graphs satisfying ${\cal P}$ and those that are $\varepsilon$-far from ${\cal P}$. We assume the algorithm can
query for each $1 \leq i,j \leq n$ whether the input $G$ contains the edge $(i,j)$. The {\em edge query complexity}, denoted $Q(\varepsilon)$, of a tester is the
number of edge queries it performs. If ${\cal P}$ has a tester whose edge query complexity depends only on $\varepsilon$ (and is independent of $n$) then ${\cal P}$ is called {\em testable}. In what follows we will mainly work with {\em vertex query complexity} which is the smallest
$q=q(\varepsilon)$ so that we can $\varepsilon$-test ${\cal P}$ by inspecting a subgraph of the input graph $G$, induced by a set of $q$ randomly
selected vertices. By a theorem of Goldreich and Trevisan \cite{GT} we know that $q(\varepsilon) \leq 2 Q(\varepsilon) \leq q^2(\varepsilon)$.
In most (but not all) discussions below we will not care much about these quadratic factors. In such cases we might use the term
{\em query complexity} without mentioning if this is vertex or edge query complexity.

Property testing in the adjacency matrix model was first introduced by Goldreich, Goldwasser and Ron \cite{GGR},
who proved that every {\em partition property} (e.g. $k$-colorability and MAX-CUT) is testable.
There are several general results guaranteeing that a graph property is testable \cite{AFNS09,BCL+06}.
A result of this nature was obtained by Alon and Shapira \cite{AS} who proved that every hereditary\footnote{A graph property is hereditary if it is closed under vertex removal. Some examples are being $3$-colorable, being triangle-free and being induced $H$-free, for some fixed $H$.} graph property is testable. Their proof applied Szemer\'edi's regularity lemma \cite{Sze78} (see also \cite{RSsurvey}), which is one of the most useful tools when studying properties of dense graphs. Using this tool comes with a hefty price, since the bounds one obtains when using the regularity lemma are of
tower-type\footnote{The tower function $\mbox{tower}(x)$ is a tower of exponents of height $x$.}.

One of the central open (meta) problems related to testing graph properties is when can one turn an ineffective (e.g. one with tower-type bounds) result into an efficient one, preferably with polynomial bounds. While this is a quantitative question, what lies beneath it is in fact the following qualitative problem; when can we prove a testability result while avoiding Szemer\'edi's regularity lemma, either by giving a direct combinatorial argument or by using a weaker variant of the regularity lemma (e.g. the Frieze--Kannan regularity lemma \cite{FK-1} which we discuss below). For example,
R\"odl and Duke \cite{RD} used the regularity lemma in order to (implicitly) prove that $k$-colorability is testable. The tower-type bounds
obtained in \cite{RD} were improved to polynomial in \cite{GGR} using a direct argument which avoided the use of the regularity lemma.
A specific central open problem, due to Alon and Fox \cite{AF15a}, concerns hereditary properties, and asks which hereditary
properties are testable with query complexity $\mbox{poly}(1/\varepsilon)$. A systematic investigation of this problem was carried out in~\cite{GS17}.

\subsection{Distance estimation}

In the dense graph model we say that a graph's distance from ${\cal P}$ is $\alpha$, if $\alpha$ is the smallest real so that $G$ is $\alpha$-close to ${\cal P}$.
In other words, this is the minimum number of edges one should add/delete in order to obtain a graph satisfying ${\cal P}$, normalised by $n^2$.
We denote this quantity by $\mathrm{dist}_{\cal P}(G)$.
A {\em distance estimator} for ${\cal P}$ is a randomized algorithm that given $\alpha,\varepsilon >0$ distinguishes with high probability (say, $2/3$)
between graphs that are $(\alpha-\varepsilon)$-close to ${\cal P}$ and those that are $\alpha$-far from ${\cal P}$. If for every $\alpha,\varepsilon$
there is a distance estimator for ${\cal P}$ whose query complexity depends only on $\varepsilon$, then ${\cal P}$ is said to be {\em estimable}.
Note that testing ${\cal P}$ is equivalent to distance estimation with $\alpha=\varepsilon$, hence this notion is at least as strong as testability.

Distance estimation was first studied in \cite{PRR06} and has since been studied in various other settings such as distributions \cite{BFR}, strings \cite{BEK+03},
sparse graphs \cite{CGR13,ELR18,MR09b}, boolean functions \cite{ACCL07,BCE+19}, error correcting codes \cite{GR05,KS09} and image processing \cite{BMR16}.
It is known that in certain settings, there are testable properties which are not estimable \cite{FF06}.
One of the central and most unexpected results in the area of graph property testing is the Fischer--Newman theorem \cite{FN}, which states that
in the setting of graphs, every testable property is also estimable. As with several of the main results in this area, the proof in \cite{FN}
relied on Szemer\'edi's regularity lemma \cite{Sze78} and thus resulted in a tower-type loss when transforming a tester
for ${\cal P}$ into a distance estimator for ${\cal P}$.
Returning to the discussion in the last paragraph of the previous subsection, it is natural to ask if one can improve the transformation
of \cite{FN} and turn a tester for ${\cal P}$ into a distance estimator with a polynomial loss.

\subsection{New results concerning hereditary graph properties}

As we mentioned in the previous subsection, the family of hereditary graph properties has been extensively studied within the setting
of graph property testing. The fact that every hereditary property is testable follows from the following statement, where we use
$ind(F,G)$ to denote the probability that a random mapping $\varphi:V(F)\to V(G)$ is an injective induced homomorphism. \footnote{A mapping  $\varphi:V(F)\to V(G)$ is an induced homomorphism if $uv\in E(F)$ if and only if $\varphi(u)\varphi(v)\in E(G)$.}

\begin{lemma}[Induced Removal Lemma, \cite{AS}]\label{lem:induced}
For every $\varepsilon>0$ and every hereditary ${\cal P}$, there exists $M=M_{\ref{lem:induced}}(\varepsilon,{\cal P})$, $\delta=\delta_{\ref{lem:induced}}(\varepsilon,{\cal P})>0$ and $n_0=n_{{\ref{lem:induced}}}(\varepsilon,{\cal P})$ such that if a graph $G$ on $n\geq n_0$ vertices is $\varepsilon$-far from ${\cal P}$ then there is a graph $F \not \in {\cal P}$ with $|V(F)|\leq M$ such that $ind(F,G )\geq \delta$.
\end{lemma}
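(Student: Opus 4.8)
The plan is to derive this from Szemerédi's regularity lemma together with a "cleaning" step that destroys all regular pairs whose density is bounded away from $0$ and $1$. The key idea is that if $G$ has no copy of any $F \notin \mathcal P$ with $|V(F)| \le M$ where $M$ is the number of parts in the regular partition, then $G$ is actually close to a graph in $\mathcal P$, contradicting that it is $\varepsilon$-far.

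First I would apply the regularity lemma to $G$ to obtain an equitable partition $V(G) = V_1 \cup \dots \cup V_k$ into $k$ parts, where $1/\gamma \le k \le M_0(\gamma)$ for a suitably small regularity parameter $\gamma = \gamma(\varepsilon)$ (to be specified, roughly $\gamma \ll \varepsilon$). Next I would "clean" the graph: remove all edges inside parts, all edges between non-regular pairs, all edges between pairs of density less than some threshold $\eta$, and add all edges between pairs of density greater than $1-\eta$. Standard counting shows this changes at most (roughly) $\gamma n^2 + \eta n^2 + (\text{small}) n^2$ edges, so for appropriate choices of $\gamma, \eta$ the total is less than $\varepsilon n^2$. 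Call the resulting graph $G'$.

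The crux of the argument: $G'$ is built so that between any two parts $V_i, V_j$ the "density pattern" is clean — each pair is either empty or complete. Now suppose for contradiction that $G$ (hence, up to a few modifications, $G'$ as well) contains no induced copy of any $F \notin \mathcal P$ of order $\le M$. I would then argue that $G' \in \mathcal P$: pick one representative vertex from each part to form a graph $F^*$ of order $k \le M$; since $ind(F^*, G)$ is not too small (a constant fraction of all $k$-tuples, one from each part, induce a copy of $F^*$ after cleaning, using that densities are near $0$ or $1$), and $G$ has no induced copy of any small graph outside $\mathcal P$, we get $F^* \in \mathcal P$. Then, because $\mathcal P$ is hereditary and $G'$ is a "blow-up" of $F^*$ (each vertex of $F^*$ replaced by an independent set, pairs complete or empty according to $F^*$), and hereditary properties are closed under blow-ups when... — here one must be careful: hereditary properties need not be closed under blow-ups. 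So instead I would use the standard trick: if $G'$ is not in $\mathcal P$, then since $\mathcal P$ is hereditary there is an induced subgraph $H \subseteq G'$ with $H \notin \mathcal P$ and $|V(H)|$ minimal; by minimality and the blow-up structure one shows $|V(H)| \le M$ and that $H$ appears as an induced subgraph of $G$ with probability $\ge \delta$ for a suitable $\delta = \delta(\varepsilon, \mathcal P)$, again contradicting our assumption. Taking $F = H$ completes the argument.

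The main obstacle is the handling of the blow-up issue together with quantifying $\delta$: one cannot simply say "the reduced graph is in $\mathcal P$", because a blow-up of a graph in $\mathcal P$ may leave $\mathcal P$. The right move is to look for the forbidden induced subgraph $F$ inside $G'$ directly, bound its order by the number $M$ of parts (since within a single part the blow-up structure is an independent set, more than one vertex per part is never needed to witness membership failure — one exploits that adding twin vertices to a graph not in $\mathcal P$ either keeps it out of $\mathcal P$, in which case recurse, or a smaller witness exists), and lower-bound the induced density of this $F$ in $G$ by noting that $F \subseteq G'$ with $G'$ close to $G$, so a random embedding respecting the parts succeeds with probability at least a constant depending only on $k \le M$ and $\eta$. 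Getting this $\delta$ to depend only on $\varepsilon$ and $\mathcal P$ (not on $n$), and threading the inequalities $\gamma, \eta \ll \varepsilon$ through consistently, is the part that requires care.
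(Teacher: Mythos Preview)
The paper does not prove this lemma; it is quoted from \cite{AS} (with predecessors \cite{AFKS} and alternative proofs in \cite{LS,CF12,BCL+06}) and used as a black box. So there is no ``paper's own proof'' to compare against. That said, your sketch has two genuine gaps that would prevent it from going through.

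First, your cleaning step is inconsistent with the claim that follows it. You only modify pairs whose density lies below $\eta$ or above $1-\eta$ (plus irregular pairs and edges inside parts); pairs with density in $[\eta,1-\eta]$ are untouched. Hence $G'$ is \emph{not} a blow-up of any reduced graph, and the assertion ``each pair is either empty or complete'' is false. You cannot simply round those intermediate pairs to $0$ or $1$ either, since that could cost $\Theta(n^2)$ edge changes, far exceeding $\varepsilon n^2$. The actual proofs do not attempt to make $G'$ a blow-up; they instead use a counting/embedding lemma that works with the intermediate densities directly.

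Second, even granting a blow-up structure with independent sets inside parts, your ``twin'' argument that a minimal forbidden induced subgraph $H\subseteq G'$ uses at most one vertex per part is wrong. Minimal forbidden graphs for hereditary properties can contain non-adjacent twins: for $\mathcal{P}=\{K_{2,2}\text{-free}\}$, the unique minimal obstruction is $K_{2,2}$ itself, which sits inside your $G'$ using two vertices from each of two parts, yet uses only one edge of the reduced graph. So $|V(H)|$ is not bounded by the number of parts, and the step ``bound its order by $M$'' fails. The published proofs circumvent this via a Ramsey-type argument (deciding for each part whether to make it a clique or an independent set, and for each dense regular pair whether to aim for density $0$ or $1$) together with an induced embedding lemma for regular partitions; this is precisely the ``care'' you allude to but do not supply.
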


The first version of the above lemma was obtained by Alon, Fischer, Krivelevich and Szegedy \cite{AFKS} who proved it when ${\cal P}$ can be characterized using a finite number of forbidden induced subgraphs.
The lemma was proved in full generality by Alon and Shapira \cite{AS}. Alternative proofs were later obtained by Lov\'asz and Szegedy \cite{LS}, Conlon and Fox \cite{CF12} and Borgs et al. \cite{BCL+06}.
It was also extended to the setting of hypergraphs by R\"odl and Schacht \cite{RS}.

Note that it follows immediately from Lemma \ref{lem:induced} that every hereditary property is testable with vertex query complexity
\begin{equation}\label{eqtesting}
q(\eps)=\max\{n_0,M/\delta\}\;.
\end{equation}
Indeed, the algorithm samples a set $X$ of $q$ vertices, queries about all pairs within $X$,
and then accepts if and only if the graph on $X$ satisfies ${\cal P}$.
If $G$ satisfies ${\cal P}$ then the algorithm clearly answers correctly (with probability $1$). If $G$ is $\varepsilon$-far from ${\cal P}$, then by Lemma \ref{lem:induced} a random $M$-tuple of vertices spans an induced copy of a graph $F \not \in {\cal P}$ with probability at least $\delta$. Hence, a sample of size $M/\delta$ contains an induced copy of $F$ with probability at least $2/3$, thus guaranteeing that the sample of vertices does not satisfy ${\cal P}$ (since ${\cal P}$ is hereditary). Recall that \cite{GT} proved that if
${\cal P}$ is testable, then it is testable using an algorithm as above. Hence, the bounds in Lemma \ref{lem:induced}
more or less determine the query complexity of testing a hereditary ${\cal P}$. This raises the following natural problem, introduced by
Hoppen et al. \cite{HKLLS-1,HKLLS-2} and by Fiat and Ron \cite{FR},
asking if it is possible to estimate every hereditary ${\cal P}$ with (roughly) the same query complexity with which it can be tested as in (\ref{eqtesting}).

\begin{problem}\label{q2}
Determine if every hereditary graph property ${\cal P}$ is estimable with query complexity $$n_0 \cdot M/\delta\;,$$
where $M=M_{\ref{lem:induced}}(\varepsilon',{\cal P})$, $\delta=\delta_{\ref{lem:induced}}(\varepsilon',{\cal P})$, $n_0=n_{{\ref{lem:induced}}}(\varepsilon',{\cal P})$
are given by Lemma \ref{lem:induced} with $\varepsilon'=\mathrm{poly}(\varepsilon)$.
\end{problem}

\begin{remark}
There are hereditary graph properties (e.g. triangle-freeness) for which
the best known bounds for $M$ and $\delta$ in Lemma \ref{lem:induced} are of tower-type. One can argue that in such cases
there is little difference between the $\mathrm{tower}(M/\delta)$ bounds given by \cite{FN} and those suggested
by Problem \ref{q2}. However, we should
emphasize that for many of these properties (e.g. triangle-freeness) the tower-type bounds are not known to be tight (indeed, the best known lower bounds are just slightly super polynomial). Perhaps more importantly, there are numerous hereditary graph
properties for which it is known that both $M$ and $\delta$ in Lemma \ref{lem:induced} are polynomial
in $\varepsilon$ (e.g. $k$-colorability, being an interval graph or being a line graph; see the detailed discussion in \cite{GS17}). For all these properties, Problem \ref{q2} suggests a $\mathrm{poly}(1/\varepsilon)$ bound,
versus the $\mathrm{tower}(1/\varepsilon)$ bound given by \cite{FN}.
\end{remark}

Problem \ref{q2} was studied by Hoppen et al. \cite{HKLLS-1,HKLLS-2}.
Their main result was that
every hereditary ${\cal P}$ is estimable with query complexity $2^{\mathrm{poly}((1/\delta)^{M^2}, \log n_0)}$.
Our first main result is the following exponential improvement of this result, making a significant step towards
resolving Problem \ref{q2}.

\begin{theorem}\label{thm:main}
Every hereditary ${\cal P}$ is estimable with query complexity
$$
2^{\mathrm{poly}(M/\delta, \log n_0)}\;,
$$
where $M=M_{\ref{lem:induced}}(\varepsilon/2,{\cal P})$, $\delta=\delta_{\ref{lem:induced}}(\varepsilon/2,{\cal P})$ and $n_0=n_{{\ref{lem:induced}}}(\varepsilon/2,{\cal P})$
are the parameters of Lemma \ref{lem:induced}.
\end{theorem}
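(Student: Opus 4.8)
The plan is to follow the Fischer--Newman strategy but replace Szemerédi regularity with Frieze--Kannan weak regularity, which is what keeps the bounds at exponential rather than tower level. The starting observation is that to distance-estimate ${\cal P}$, it suffices to sample a vertex set $X$ of the claimed size, record the induced graph $G[X]$, and then \emph{estimate} $\mathrm{dist}_{\cal P}(G)$ from $G[X]$ rather than merely decide membership. The key fact driving everything is that $\mathrm{dist}_{\cal P}(G)$ is, up to an additive $\varepsilon$, a function only of a coarse ``type'' of $G$: namely, if two graphs $G$ and $G'$ have weak regular partitions into $k = k(\varepsilon)$ parts whose reduced weighted graphs (the matrices of densities between parts) are close in the cut-norm / $\ell_1$ sense, then $|\mathrm{dist}_{\cal P}(G) - \mathrm{dist}_{\cal P}(G')| \le \varepsilon$. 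This is because editing $\varepsilon n^2$ edges changes the reduced graph by only $O(\varepsilon)$ in cut distance, and conversely one can realize any small cut-distance change in the reduced graph by an $O(\varepsilon)$-edit of $G$; the hereditariness of ${\cal P}$ enters when we argue that the ``right'' notion of distance is governed by which reduced graphs are themselves close to being limits of graphs in ${\cal P}$ (this is where Lemma \ref{lem:induced} is invoked, to certify that a reduced graph far from all ${\cal P}$-reduced-graphs forces $G$ to be far from ${\cal P}$, with quantitative parameters $M,\delta,n_0$).

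Concretely, I would proceed in the following steps. First, fix a weak-regularity parameter and show that for a graph $G$ on $n$ vertices there is an equitable Frieze--Kannan partition into $k = 2^{\mathrm{poly}(1/\varepsilon')}$ parts; crucially the number of parts is \emph{exponential}, not tower, in $1/\varepsilon'$ — this is the one place the weak regularity lemma buys us the whole improvement. Second, prove a sampling lemma: a uniform sample $X$ of $q = 2^{\mathrm{poly}(k, \log n_0)}$ vertices, with high probability, induces a graph $G[X]$ that admits a weak regular partition whose reduced graph is within cut-distance $\varepsilon'$ of that of $G$. (The $\log n_0$ in the exponent is there to ensure $q \ge n_0$ so Lemma \ref{lem:induced} applies, and to push down the error from the finite-$n_0$ regime.) Third, establish the ``distance is determined by the reduced graph'' statement above, i.e.\ a Lipschitz-type estimate $|\mathrm{dist}_{\cal P}(G) - \widehat{\mathrm{dist}}(\text{reduced graph of }G)| \le \varepsilon/4$ for an appropriate functional $\widehat{\mathrm{dist}}$ on weighted graphs on $k$ vertices, using the induced removal lemma to control the ``hard direction''. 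Fourth, observe that $\widehat{\mathrm{dist}}$ applied to the reduced graph of $G[X]$ can be computed (or approximated to within $\varepsilon/4$) purely from $G[X]$, since $G[X]$ is small; combining with Steps 2 and 3 gives an estimate of $\mathrm{dist}_{\cal P}(G)$ to within $\varepsilon$ with probability $2/3$, which in particular distinguishes $(\alpha-\varepsilon)$-close from $\alpha$-far. Finally, bookkeep: the total query complexity is $q = 2^{\mathrm{poly}(M/\delta,\log n_0)}$, where $M/\delta \ge k$ because the testing query complexity \eqref{eqtesting} already forces $M/\delta$ to dominate the weak-regularity parameter count.

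The main obstacle — and, I expect, the technical heart of the paper — is Step 3, the claim that $\mathrm{dist}_{\cal P}(G)$ is an $\varepsilon$-Lipschitz function of the Frieze--Kannan reduced graph. The difficulty is that weak regular partitions, unlike Szemerédi ones, do \emph{not} control the induced subgraph densities inside the clusters — a weak regular pair can be very far from pseudorandom — so the usual ``plug in a random-like graph of the right density inside each part and count induced copies'' argument does not directly go through, and this is precisely why Fischer--Newman needed the stronger regularity lemma. Overcoming this requires the ``certain properties of Frieze--Kannan Weak Regular partitions that are of independent interest'' advertised in the abstract: one must show that for the purpose of computing distance to a \emph{hereditary} property, only the reduced graph matters, presumably by a two-sided argument that (i) any graph with a given reduced graph can be edited into the ``canonical'' blow-up without losing membership in ${\cal P}$ (using hereditariness in one direction), and (ii) the removal lemma certifies the lower bound in the other direction. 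Balancing these two bounds so that the gap is $O(\varepsilon)$ while the partition has only $2^{\mathrm{poly}(1/\varepsilon')}$ parts is the delicate point; once it is in place, the sampling and bookkeeping steps are routine.
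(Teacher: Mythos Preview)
Your high-level strategy---replace Szemer\'edi regularity with Frieze--Kannan weak regularity, identify the continuity of the map ``reduced graph $\mapsto \mathrm{dist}_{\cal P}$'' as the crux, and invoke Lemma~\ref{lem:induced} for the hard direction---is correct and matches the paper's plan. However, the execution differs substantially from yours in two places, and your proposed resolution of the main obstacle (Step~3) is not what the paper does and, as stated, would not go through.

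First, the paper does not sample $X$ and work with $G[X]$; the statement $\mathrm{dist}_{\cal P}(G[X])\approx \mathrm{dist}_{\cal P}(G)$ is Corollary~\ref{coro:simple}, derived \emph{from} Theorem~\ref{thm:main} a posteriori. The actual proof queries $G$ to recover a $\gamma$-\emph{signature} (Definition~\ref{def:sig}) of an $(f,\gamma)$-\emph{final} partition $A$---a partition whose index cannot be raised by $\gamma$ under any refinement into up to $f(t)$ parts; this is Lemma~\ref{lem:final_part_sig}, proved by testing all GGR partition properties simultaneously. Finality, not mere FK-regularity, is essential: via Lemma~\ref{lem:resp_reg} it guarantees that \emph{every} refinement of $A$ up to $T$ parts remains FK-regular$^\star$ for $G$ and that its signature is still well-approximated by the extension of $S$, while at the same time $A$ can be refined to be FK-regular$^\star$ for any other graph $G'$. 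This simultaneous control over $G$ and the putative optimal $G'$ is what drives the two-sided distance argument; a single FK-partition of $G$ (or of a sample) gives no such guarantee.

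Second, your mechanism for Step~3---``edit into the canonical blow-up without losing membership in ${\cal P}$''---does not work: editing $G$ into the blow-up of its reduced graph can cost $\Theta(n^2)$ edges (FK-regularity gives no pairwise control), and hereditariness says nothing about such an edit, since the blow-up is not an induced subgraph of $G$. The paper's actual device is Lemma~\ref{lem:base_lemma}: if $A$ is $\gamma$-FK-regular$^{\star}$ (the strictly stronger $\ell_1$ notion of Definition~\ref{def:str_fk_regularity}) with signature $S$, then for \emph{any} target signature $S'$ one can randomly add/delete edges to obtain $G'$ so that (a) the edit cost is $d_1(S,S')+\eps$, and (b) the \emph{same} partition $A$ is $\eps$-FK-regular for $G'$ with signature $S'$. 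The asymmetry ``start FK-regular$^{\star}$, end FK-regular'' is precisely what makes the continuity go through, via a randomized-rounding concentration argument; this is the advertised new property of FK partitions. Lemma~\ref{lem:info_from_sig} then implements your Step~3 as a deterministic search over discretized target signatures $C$ with $ind(H,C)\le\delta/2$ for all forbidden $H$, using Lemma~\ref{lem:sample_with_sig} (a signature of an FK-regular partition determines $ind(H,\cdot)$) in both directions and Lemma~\ref{lem:base_lemma} to realize the edit. Your bookkeeping remark that $M/\delta$ dominates the regularity parameters is correct in spirit; the paper formalizes it via Proposition~\ref{easy}, which gives $M/\delta \ge \Omega(1/\eps)$ and thereby absorbs the $\eps$-dependence into $\mathrm{poly}(M/\delta)$.
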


\begin{remark}
In all known cases, the best bounds in Lemma \ref{lem:induced} are such
that $\log n_0 \ll 1/\delta$, hence the upper bound of \cite{HKLLS-1} is $2^{(1/\delta)^{O(M^2)}}$ while
the one in Theorem \ref{thm:main} is $2^{\mathrm{poly}(M/\delta)}$.
\end{remark}

In almost all cases, results concerning testing of dense graphs rely on combinatorial statements which
imply trivial algorithms. For example, the algorithm for testing a hereditary property ${\cal P}$
is trivial once we have Lemma \ref{lem:induced} at our disposal. In sharp contrast,
many estimation results involve sampling a set of vertices and then carrying out a highly non-trivial computation over this sample.
This is certainly the case in the present paper, see the proofs of Lemmas \ref{lem:final_part_sig} and \ref{lem:info_from_sig}.
However, thanks to a well known sampling trick \cite{GGR}, one can transfer
any estimation result into a combinatorial statement. For example, this trick
gives the following corollary of Theorem \ref{thm:main}.

\begin{corollary}\label{coro:simple}
Set $q=2^{\mathrm{poly}(M/\delta, \log n_0)}$ as in Theorem \ref{thm:main}. Then
$$
\Pr_X\left[\left|\mathrm{dist}_{\cal P}(G[X])-\mathrm{dist}_{\cal P}(G)\right| \leq \eps\right]\geq 2/3\;,
$$
where the probability is over randomly selected subsets $X$ of $q$ vertices from $G$, and $G[X]$ is the graph induced
by $G$ on $X$.
\end{corollary}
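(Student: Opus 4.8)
The plan is to deduce this from Theorem~\ref{thm:main} via the sampling trick of \cite{GGR}: the existence of an efficient distance estimator forces the empirical distance $\mathrm{dist}_{\cal P}(G[X])$ to concentrate around the true distance $\mathrm{dist}_{\cal P}(G)$. Write $\alpha:=\mathrm{dist}_{\cal P}(G)$, and let $q$ be the vertex query complexity of the estimator supplied by Theorem~\ref{thm:main} at precision $\varepsilon/4$, which is of the form $2^{\mathrm{poly}(M/\delta,\log n_0)}$ as claimed; since enlarging the sample never hurts, we may also assume $q$ exceeds any prescribed polynomial in $1/\varepsilon$, and we assume throughout that $n$ is large enough that a uniform $q$-subset of $V(G)$ exists and the estimator's guarantee applies to $q$-vertex inputs. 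If $\alpha=0$ then $G\in{\cal P}$, so $G[X]\in{\cal P}$ by heredity and $\mathrm{dist}_{\cal P}(G[X])=0$; hence assume $\alpha>0$. Setting $\mu:=\mathbb{E}_X[\mathrm{dist}_{\cal P}(G[X])]$, it suffices to prove that $\mathrm{dist}_{\cal P}(G[X])$ is concentrated around $\mu$ and that $\alpha-\varepsilon/2\le\mu\le\alpha$.

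For the concentration, replacing a single vertex of the sample $X$ changes $G[X]$ in at most $q-1$ pairs, hence changes $\mathrm{dist}_{\cal P}(G[X])$ (which counts edges normalised by $q^2$) by at most $1/q$; so McDiarmid's bounded-differences inequality, in its version for uniform sampling without replacement, gives $\Pr_X[\,|\mathrm{dist}_{\cal P}(G[X])-\mu|>\varepsilon/4\,]\le 0.1$ (using that $q$ is a large polynomial in $1/\varepsilon$). Combined with $\alpha-\varepsilon/2\le\mu\le\alpha$, the triangle inequality then gives $|\mathrm{dist}_{\cal P}(G[X])-\alpha|\le 3\varepsilon/4<\varepsilon$ with probability at least $0.9\ge 2/3$.

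The bound $\mu\le\alpha$ is elementary: fixing $G'\in{\cal P}$ on $V(G)$ with $|E(G)\triangle E(G')|=\alpha n^2$, heredity gives $G'[X]\in{\cal P}$, so $G[X]$ is turned into a graph in ${\cal P}$ by editing only the pairs of $X$ that lie in $E(G)\triangle E(G')$, whence $\mathrm{dist}_{\cal P}(G[X])\le|(E(G)\triangle E(G'))\cap\binom{X}{2}|/q^2$ for every $X$; taking expectations gives $\mu\le\alpha$. The bound $\mu\ge\alpha-\varepsilon/2$ is where Theorem~\ref{thm:main} enters. Let $A$ be its distance estimator at precision $\varepsilon/4$; I will use that $A$ may be taken to be a canonical vertex sampler, drawing a uniform subset $S$ of the vertices of its input and returning a verdict in $\{\textrm{close},\textrm{far}\}$ that depends only on the induced subgraph on $S$ and on the threshold $\alpha$ — and not on the order of the input. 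Run $A$ on $G$ with threshold $\alpha$: since $\mathrm{dist}_{\cal P}(G)=\alpha$, the graph $G$ is $\alpha$-far from ${\cal P}$, so $A$ outputs ``far'' with probability at least $2/3$. Drawing $A$'s sample in two stages — first a uniform $q$-subset $X$ of $V(G)$, then $S\subseteq X$ — shows that this run has exactly the output distribution of: draw $X$, then run $A$ on the $q$-vertex graph $G[X]$ with threshold $\alpha$. Conditioned on an $X$ with $\mathrm{dist}_{\cal P}(G[X])<\alpha-\varepsilon/4$, the graph $G[X]$ is $(\alpha-\varepsilon/4)$-close to ${\cal P}$, so $A$ outputs ``far'' with probability at most $1/3$; hence, with $p:=\Pr_X[\mathrm{dist}_{\cal P}(G[X])<\alpha-\varepsilon/4]$, we get $2/3\le(1-p)+\tfrac13 p$, i.e.\ $p\le 1/2$. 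If now $\mu<\alpha-\varepsilon/2$, then $\mu+\varepsilon/4<\alpha-\varepsilon/4$, so $\{\mathrm{dist}_{\cal P}(G[X])\le\mu+\varepsilon/4\}\subseteq\{\mathrm{dist}_{\cal P}(G[X])<\alpha-\varepsilon/4\}$, and the former event has probability at least $0.9$ by the concentration step — contradicting $p\le 1/2$. So $\mu\ge\alpha-\varepsilon/2$.

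I expect the only real obstacle to be the two-stage-sampling identity used above, i.e.\ the fact that the estimator of Theorem~\ref{thm:main} can be regarded as a canonical vertex sampler whose verdict on the sampled subgraph is independent of the order of the host graph; this should be transparent for the estimator built in the proof of Theorem~\ref{thm:main}, which does all of its work on the sampled induced subgraph, and holds in general by the Goldreich--Trevisan-type normalisation \cite{GT}. The remaining ingredients — a bounded-differences concentration inequality for sampling without replacement, the harmless assumption ``$n$ sufficiently large'' (so that a $q$-subset exists and the estimator's guarantee applies to $G[X]$), and the minor care needed with the conventions separating ``$\alpha$-far'' from ``$(\alpha-\varepsilon)$-close'' — are all routine.
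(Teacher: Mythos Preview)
Your proposal is correct and follows exactly the route the paper indicates: the paper gives no proof of Corollary~\ref{coro:simple} beyond the sentence ``thanks to a well known sampling trick \cite{GGR}, one can transfer any estimation result into a combinatorial statement'', and what you have written is a faithful expansion of that trick. The three ingredients you use --- bounded-differences concentration for $\mathrm{dist}_{\cal P}(G[X])$, the upper bound $\mu\le\alpha$ via heredity, and the lower bound via the two-stage-sampling comparison of the estimator on $G$ versus on $G[X]$ --- are precisely the standard components of the \cite{GGR} argument, and you correctly flag the one genuine subtlety (canonicality/size-obliviousness of the estimator) and its resolution via \cite{GT}. The only cosmetic point is that for the estimator's guarantee to apply to the $q$-vertex graph $G[X]$ you need $q$ to exceed the threshold $N$ from the proof of Theorem~\ref{thm:main}, not merely a polynomial in $1/\varepsilon$; since $N$ is itself $2^{\mathrm{poly}(M/\delta,\log n_0)}$ this does not affect the conclusion.
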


It is interesting to note that with Corollary \ref{coro:simple} at hand, we can now go back and reprove Theorem \ref{thm:main} using
the ``trivial/natural'' algorithm which samples a set of $q$ vertices $X$, computes $\mathrm{dist}_{\cal P}(G[X])$, and then states that
$G$ is $(\alpha-\eps)$-close to ${\cal P}$ if $\mathrm{dist}_{\cal P}(G[X]) \leq \alpha-\eps/2$ and is otherwise $\alpha$-far from ${\cal P}$.

Our proof of Theorem \ref{thm:main} actually gives the bound $2^{\mathrm{poly}(M/\eps\delta, \log n_0)}$. One can speculate
that $\mathrm{poly}(M/\eps\delta)=\mathrm{poly}(M/\delta)$ since in all known cases $\delta$ is at best polynomial
in $\eps$, and in many cases much smaller. In order to formally be able to remove the dependence on $\eps$ from our bound, we
prove the following proposition, where ${\cal P}$ is {\em trivial} if either ${\cal P}$ contains all graphs or if it contains finitely many graphs. The proof of this proposition relies on a subtle application of Ramsey's theorem.

\begin{proposition}\label{easy}
The following holds for every non-trivial hereditary property ${\cal P}$. If $q(\varepsilon)$ denotes the vertex
query complexity of ${\cal P}$ then for every small enough $\varepsilon$, we have
\begin{equation}\label{eqlowerg}
M/\delta \geq q(\varepsilon) \geq  \Omega(1/\eps)\;,
\end{equation}
where $M=M_{\ref{lem:induced}}(\eps,{\cal P})$ and $\delta=\delta_{\ref{lem:induced}}(\eps,{\cal P})$ are the constants of Lemma \ref{lem:induced}.
\end{proposition}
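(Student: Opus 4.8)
The plan is to prove the two inequalities in \eqref{eqlowerg} separately; the first is soft and the second is the real content. The inequality $M/\delta\ge q(\varepsilon)$ is essentially a restatement of \eqref{eqtesting}: the canonical tester obtained from Lemma~\ref{lem:induced} inspects $\max\{n_0,M/\delta\}$ random vertices, and since replacing $M$ by $\max\{M,n_0\}$ only weakens Lemma~\ref{lem:induced} we may take this to be $M/\delta$. For the lower bound $q(\varepsilon)\ge\Omega(1/\varepsilon)$ I would use Yao's method: for every small $\varepsilon$ and every large $n$ (divisible as needed), exhibit a graph $G_{\mathrm{yes}}\in\mathcal P$ and a graph $G_{\mathrm{no}}$ that is $\varepsilon$-far from $\mathcal P$, both on $n$ vertices, whose induced subgraphs on a uniformly random set $X$ of $q$ vertices satisfy $d_{\mathrm{TV}}\bigl(G_{\mathrm{yes}}[X],G_{\mathrm{no}}[X]\bigr)=o(1)$ whenever $q=o(1/\varepsilon)$. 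By \cite{GT} it suffices to fool such canonical testers, so no tester can have vertex query complexity $o(1/\varepsilon)$.

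To build these instances I first normalize. Being infinite, hereditary and non-trivial, $\mathcal P$ contains (by Ramsey) all empty graphs $\overline{K_m}$ or all cliques $K_m$; since passing to the complement property preserves both $\mathrm{dist}_{\mathcal P}$ and $q(\cdot)$, I may assume $\overline{K_m}\in\mathcal P$ for all $m$. Call a graph $H$ \emph{inflatable} if every balanced blow-up $H[s]$ lies in $\mathcal P$; the inflatable graphs form a hereditary subclass $\mathcal B\subseteq\mathcal P$ containing $K_1$ (as $K_1[s]=\overline{K_s}$). In the principal case one finds a forbidden graph $F\notin\mathcal P$, with $k:=|V(F)|$, and a vertex $v^*\in V(F)$ for which $F-v^*$ is inflatable, and sets $G_{\mathrm{yes}}:=(F-v^*)[n/(k-1)]\in\mathcal P$, letting $G_{\mathrm{no}}$ be the blow-up of $F$ in which the part $P_{v^*}$ for $v^*$ has size $w=\Theta_{\mathcal P}(\varepsilon n)$ while the remaining $k-1$ parts are balanced. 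That $G_{\mathrm{no}}$ is $\varepsilon$-far is a transversal/deletion count: every member of $\mathcal P$ is induced-$F$-free, $G_{\mathrm{no}}$ contains $\prod_i|P_i|$ copies of $F$ with one vertex per part, each destroyed only by modifying a pair inside it, and any pair lies in at most $\prod_i|P_i|/\min_{i\ne j}|P_i||P_j|=\prod_i|P_i|/\bigl(w\cdot\Theta(n)\bigr)$ such copies; hence making $G_{\mathrm{no}}$ induced-$F$-free (a fortiori putting it into $\mathcal P$) costs $\Omega(wn)=\Omega(\varepsilon n^2)$ modifications, which is $\ge\varepsilon n^2$ once the constant in $w$ is large enough. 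Indistinguishability is then clear: a random $q$-set misses $P_{v^*}$ with probability $1-O(qw/n)=1-o(1)$, and conditioned on this, $G_{\mathrm{no}}[X]$ and $G_{\mathrm{yes}}[X]$ are both the balanced blow-up of $F-v^*$ restricted to $X$, with the same (hypergeometric, hence asymptotically multinomial) law, so $d_{\mathrm{TV}}=O(qw/n)+o(1)<1/3$ when $q$ is below a suitable constant multiple of $1/\varepsilon$.

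The main obstacle, and where the subtle use of Ramsey's theorem comes in, is that a forbidden $F$ with $F-v^*$ inflatable need not exist: for $\mathcal P=\{\text{star plus isolated vertices}\}$ the only inflatable graphs are edgeless, yet every forbidden graph has an edge avoiding every vertex, so $(F-v^*)[s]$ always leaves $\mathcal P$. One copes with this by working with $\mathcal B$ directly: since $K_1\in\mathcal B$ and $\mathcal B$ is not all graphs, $\mathcal B$ cannot be closed under adding a vertex, so there are $H\in\mathcal B$ and a one-vertex extension $R=H+v^*$ with some blow-up $R[s_0]\notin\mathcal P$; one reruns the construction with this $R$ (so $G_{\mathrm{yes}}=H[n/(k-1)]\in\mathcal P$ and $G_{\mathrm{no}}$ is the blow-up of $R$ with $P_{v^*}$ small), but now the pattern one counts to certify distance is $R[s_0]$ rather than $R$ itself. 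The deletion estimate must therefore be redone — crucially, one has to bound how efficiently edge modifications confined to the single small part $P_{v^*}$ can destroy copies, and ruling this out, together with the Ramsey argument that pins down the structure of $\mathcal P$ in this exceptional regime (where $\mathcal P$ turns out to contain no dense graph, so that the blow-up $G_{\mathrm{no}}$ with its $\Theta(\varepsilon n^2)$ edges is automatically $\varepsilon$-far from $\mathcal P$), is the delicate part. Carrying out this case split and checking that all the counting bounds survive the unbalanced blow-up is the bulk of the work; everything else is routine.
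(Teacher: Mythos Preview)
Your principal case is fine, but the exceptional case is genuinely broken. Take $\mathcal P$ to be the split graphs (graphs whose vertex set partitions into a clique and an independent set). This is hereditary, non-trivial, and contains all $\overline{K_m}$, so no normalization is needed. Since $K_2[2]=C_4$ is not split, $K_2$ is not inflatable and $\mathcal B$ consists precisely of the edgeless graphs. Every one-vertex extension of an edgeless graph is a star plus isolated vertices, which \emph{is} split; so there is no forbidden $F$ with $F-v^*$ inflatable, and you are in your exceptional regime. But split graphs contain all cliques $K_n$, so your parenthetical claim that ``$\mathcal P$ turns out to contain no dense graph'' is simply false here.

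Worse, your construction actually fails quantitatively for this $\mathcal P$. With $H$ edgeless and $R=H+v^*$ a star, the graph $G_{\mathrm{no}}$ is (up to isolated vertices) a complete bipartite graph $K_{w,m}$ with $w=\Theta(\varepsilon n)$ and $m=\Theta(n)$. One makes $K_{w,m}$ split by turning the small side into a clique, at a cost of $\binom{w}{2}=\Theta(\varepsilon^2 n^2)$ edge additions. So $G_{\mathrm{no}}$ is only $\Theta(\varepsilon^2)$-far from $\mathcal P$, not $\Theta(\varepsilon)$-far, and your Yao argument yields only $q(\varepsilon)=\Omega(1/\sqrt{\varepsilon})$ for split graphs. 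The copy-counting lower bound you sketch gives the same $\Theta(\varepsilon^2 n^2)$ answer, so there is no slack to recover.

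The paper's proof sidesteps exactly this difficulty by working with a richer notion than ``inflatable''. It calls $H$ \emph{good} if some $b$-blowup $H^f_b$ lies outside $\mathcal P$ for \emph{every} assignment $f$ of clique/independent-set to the parts (not just the all-independent one). Ramsey's theorem then shows $K_1$ is not good, so a minimal good $H$ has $h\ge 2$ and $H':=H-v$ is not good, meaning there is \emph{some} $f'$ with $(H')^{f'}_b\in\mathcal P$ --- this $f'$ tells you how to fill in the large parts of $G_n$ so that any sample missing the small part satisfies $\mathcal P$. The far-ness argument then uses Ramsey a second time: after few between-part modifications one can still find, inside a random $r$-tuple from each part, a $b_0$-subset that is a clique or independent set, yielding an induced $H^f_{b_0}$ for some $f$, which is outside $\mathcal P$ because $H$ is good. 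For split graphs this picks out $C_4$ as a minimal good graph and fills the three large parts as (IS, clique, IS), which is split; your independent-sets-only blowups cannot reproduce this.
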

The left inequality above follows from (\ref{eqtesting}).
Observe that the lower bound on $q(\varepsilon)$ is best possible since it is tight when ${\cal P}$ is the
property of having no edges (in which case $q(\varepsilon)=O(1/\varepsilon)$).

It is of course natural to study Problem \ref{q2} also for specific hereditary properties.
A natural problem of this type is whether every hereditary ${\cal P}$ that is testable
with query complexity $\mbox{poly}(1/\varepsilon)$ is also estimable with query complexity
$\mbox{poly}(1/\varepsilon)$. Such an investigation was initiated recently
by Fiat and Ron \cite{FR} who proved such a statement for many natural hereditary properties such as Chordality and not containing an induced path on $4$ vertices.

\subsection{New results concerning general graph properties}

Given the discussion above, the following problem seems natural.

\begin{problem}\label{q1}
Determine if every property ${\cal P}$ that is testable with vertex query complexity $q(\varepsilon)$, is estimable
with query complexity $q(\varepsilon')$ for some $\varepsilon'=\mathrm{poly}(\varepsilon)$.
\end{problem}

Prior to this work, the only result concerning general graph properties ${\cal P}$ was the
transformation of Fischer and Newman \cite{FN} which turns a testing algorithm for a graph
property ${\cal P}$ with query complexity $q(\varepsilon)$ into a distance estimator with
query complexity $\mbox{tower}(q(\varepsilon/2))$. Using the tools we develop in order to obtain
Theorem \ref{thm:main}, we also obtain the following improved bound.

\begin{theorem}\label{thm:main_gen}
If ${\cal P}$ is testable with query complexity $q(\varepsilon)$
then it is estimable with query complexity $2^{\mathrm{poly}(1/\eps)\cdot2^{q(\eps/2)}}$.
\end{theorem}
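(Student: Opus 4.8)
The plan is to run the strategy of Fischer and Newman \cite{FN}, but with every invocation of Szemer\'edi's regularity lemma replaced by a Frieze--Kannan weak regular partition \cite{FK-1}, whose number of parts is only singly exponential in the inverse accuracy. \textbf{Step 1: reduction to a ``local'' property.} Fix a tester $T$ for ${\cal P}$ of query complexity $q:=q(\eps/2)$ and amplify it, at a $\mathrm{poly}(1/\eps)$ cost in queries, so that it accepts every $G\in{\cal P}$ with probability $\ge 0.99$ and rejects every $G$ that is $(\eps/2)$-far from ${\cal P}$ with probability $\ge 0.99$. For a weighted graph $W$ (vertex and edge weights in $[0,1]$) let $\mathrm{acc}(W)\in[0,1]$ be the probability that $T$ accepts when its queries are answered according to $W$, and put $\Pi:=\{G:\mathrm{acc}(G)\ge 1/2\}$. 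Then ${\cal P}\subseteq\Pi$ and every member of $\Pi$ is $(\eps/2)$-close to ${\cal P}$, so $\mathrm{dist}_{\cal P}(G)-\eps/2\le\mathrm{dist}_{\Pi}(G)\le\mathrm{dist}_{\cal P}(G)$ for every $G$, and it suffices to estimate $\mathrm{dist}_{\Pi}$ to within $O(\eps)$. The key feature of $\Pi$ is that $\mathrm{acc}$ is a bounded-complexity functional: since $T$ makes only $q$ queries, $\mathrm{acc}(W)$ is a fixed combination of densities of graphs on at most $q$ vertices, and hence $|\mathrm{acc}(W_1)-\mathrm{acc}(W_2)|\le L_q\cdot d_\square(W_1,W_2)$ for a constant $L_q\le 2^{O(q)}$, where $d_\square$ is the cut distance (and $\mathrm{acc}$ of a weighted graph and of a quasirandom ``blow-up'' of it agree up to lower-order terms).

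\textbf{Step 2: the structural lemma.} Let $\gamma:=c\eps/L_q=\eps\cdot 2^{-O(q)}$ and let $k:=2^{\mathrm{poly}(1/\gamma)}=2^{\mathrm{poly}(1/\eps)\cdot 2^{O(q)}}$ be the number of parts of a Frieze--Kannan partition of accuracy $\gamma$ in which, in addition, every pair of parts is individually cut-regular (still obtainable with $2^{\mathrm{poly}(1/\gamma)}$ parts). For a graph $H$ with such a partition $\mathcal U=(V_1,\dots,V_k)$ let $W_H$ be the weighted graph on $k$ vertices with vertex weights $|V_i|/|V(H)|$ and edge weights $d_H(V_i,V_j)$, and set
$$
\Delta(H)\ :=\ \min\bigl\{\,\|W_H-W'\|_1\ :\ W' \text{ a weighted graph on the same weighted vertex set},\ \mathrm{acc}(W')\ge \tfrac{1}{2}-\eps\,\bigr\},
$$
where $\|\cdot\|_1$ is the vertex-weighted $\ell_1$-distance of edge-weight matrices, i.e. the normalised edit distance. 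The heart of the matter is the claim $|\mathrm{dist}_{\cal P}(H)-\Delta(H)|=O(\eps)$. For the upper bound on $\mathrm{dist}_{\cal P}$: take $W'$ attaining $\Delta(H)$ and build $H'$ from $H$ by editing, inside each pair $(V_i,V_j)$, the density from $d_H(V_i,V_j)$ to $W'_{ij}$ in a spread-out (deterministically quasirandom) fashion; then $\mathrm{dist}(H,H')=\|W_H-W'\|_1$, the partition $\mathcal U$ remains $O(\gamma)$-cut-regular for $H'$ with quotient $W'$, so $d_\square\bigl(H',\text{blow-up of }W'\bigr)=O(\gamma)$ and $\mathrm{acc}(H')\ge\mathrm{acc}(W')-L_q\cdot O(\gamma)\ge \tfrac{1}{2}-O(\eps)$; hence $H'$ is $(\eps/2)$-close to ${\cal P}$ and $\mathrm{dist}_{\cal P}(H)\le\mathrm{dist}(H,H')+\eps/2=\Delta(H)+O(\eps)$. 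For the lower bound: given $H''\in\Pi$ at edit-distance $\mathrm{dist}_{\Pi}(H)$ from $H$, pass to a common cut-regular refinement $\mathcal U'$ of weak regular partitions of $H$ and $H''$; averaging over cells contracts the $\ell_1$-distance, so $\|W_H^{\mathcal U'}-W_{H''}^{\mathcal U'}\|_1\le\mathrm{dist}_{\Pi}(H)$, while $\mathrm{acc}(W_{H''}^{\mathcal U'})\ge\mathrm{acc}(H'')-L_q\gamma\ge \tfrac{1}{2}-\eps$; as refining the partition perturbs $\Delta$ by only $O(\eps)$, this gives $\Delta(H)\le\mathrm{dist}_{\Pi}(H)+O(\eps)\le\mathrm{dist}_{\cal P}(H)+O(\eps)$.

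\textbf{Step 3: sampling and the estimator.} By the sampling lemma for the cut distance \cite{BCL+06}, a uniformly random set $S$ of $s:=2^{\mathrm{poly}(1/\gamma)}=2^{\mathrm{poly}(1/\eps)\cdot 2^{O(q)}}$ vertices satisfies $d_\square(G,G[S])\le\gamma$ with probability $\ge 2/3$, and a pairwise cut-regular partition of $G[S]$ can be lifted to one of $G$ with essentially the same quotient (the standard fact that regular partitions survive sampling). Hence $|\Delta(G[S])-\Delta(G)|=O(\eps)$ whp. The estimator therefore queries all pairs inside a random $s$-set $S$, computes a pairwise cut-regular partition of $G[S]$ and the associated value $\Delta(G[S])$ by brute-force minimisation over a $\mathrm{poly}(\eps)$-net of weighted graphs on $k$ vertices (evaluating $\mathrm{acc}$ on each, which is possible since $T$ is given explicitly), and outputs ``$(\alpha-\eps)$-close to ${\cal P}$'' iff $\Delta(G[S])\le\alpha-\eps/2$. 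By Steps 1--3, $\Delta(G[S])=\mathrm{dist}_{\cal P}(G)\pm O(\eps)$ whp, so after adjusting constants this is a valid distance estimator of query complexity $\binom{s}{2}=2^{\mathrm{poly}(1/\eps)\cdot 2^{q(\eps/2)}}$; the brute-force computation is doubly exponential in $s$ but costs no queries.

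\textbf{Main obstacle.} I expect the crux to be the structural lemma of Step 2, in particular the upper-bound direction. Unlike a Szemer\'edi partition, a Frieze--Kannan partition does not make each pair quasirandom for free, so one must (i) arrange, at a bounded cost, that the partition is pairwise cut-regular, and (ii) show that editing a cut-regular pair to a prescribed new density can be carried out simultaneously with the minimum number of changes and while keeping the pair cut-regular, so that $\mathrm{acc}$ of the edited graph is governed by the quotient alone; the lower-bound direction additionally needs quantitative control on how $\Delta$ changes under refinement. These are precisely the properties of Frieze--Kannan weak regular partitions advertised in the abstract, and establishing them with singly exponential bounds is what turns the inherently ineffective Fischer--Newman argument into an efficient one; the remaining steps are standard sampling and bookkeeping arguments of the kind already present in \cite{FN} and in the proof of Theorem \ref{thm:main}.
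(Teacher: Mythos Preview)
Your high-level plan---replace Szemer\'edi regularity by Frieze--Kannan regularity inside the Fischer--Newman scheme and exploit that $\mathrm{acc}$ is Lipschitz in the cut norm---is exactly the paper's strategy, and you have correctly located the crux. But two of your load-bearing claims fail as stated. The first is the assertion that one can take a Frieze--Kannan partition ``in which, in addition, every pair of parts is individually cut-regular'' using only $2^{\mathrm{poly}(1/\gamma)}$ parts: pairwise cut-regularity of every pair is $\gamma$-regularity in the Szemer\'edi sense, and Gowers' lower bound forces tower-type many parts for that, which would destroy the whole point of the argument. The paper avoids pairwise control altogether; it introduces a \emph{global} strengthening, $\gamma$-FK-regular$^\star$ (Definition~\ref{def:str_fk_regularity}), which still costs only $2^{\mathrm{poly}(1/\gamma)}$ parts, and its key Lemma~\ref{lem:base_lemma} shows that if one starts from an FK-regular$^\star$ partition, a random edit to any target signature yields a graph for which the \emph{same} partition is plain FK-regular. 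The downgrade from $\star$ to plain FK is precisely what makes the editing step go through without pairwise regularity, and this is the technical content you are deferring.

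The second gap is in your lower-bound direction: ``pass to a common cut-regular refinement'' and ``refining the partition perturbs $\Delta$ by only $O(\eps)$'' are not free facts. A refinement of an FK-regular partition need not be FK-regular, and the extension of a signature to a refinement need not remain a good signature of the refinement. The paper secures both by working from the outset with an $(f,\gamma)$-\emph{final} partition---one whose index cannot be raised by $\gamma$ under any refinement of size up to $f(t)$. Finality forces every bounded refinement to remain FK-regular$^\star$ (Lemmas~\ref{lem:robust_to_regular} and~\ref{lem:resp_reg}) and forces the signature extension to stay an $\eps$-signature (Lemma~\ref{lem:add_index}); it is also what the sampling algorithm of Lemma~\ref{lem:final_part_sig} actually outputs, rather than the ``lift a regular partition of $G[S]$ to $G$'' step you propose, which would itself need justification. (A minor arithmetic point: since $\mathrm{acc}$ is a $\{0,1\}$-combination of $ind(H,\cdot)$ over all $2^{\binom{q}{2}}$ labelled $q$-vertex graphs, each moving by $O(q^2)\,d_\square$, your Lipschitz constant is $L_q=2^{O(q^2)}$, not $2^{O(q)}$; compare the choice of $\gamma$ in Lemma~\ref{lem:sample_with_sig_gen}.)
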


We would like to argue at this point that since any ``natural'' property satisfies $q(\varepsilon) \geq \log (1/\varepsilon)$ the above
bound can be written as $\mathrm{exp}(\mathrm{exp}(\mathrm{poly}(q(\varepsilon/2))))$.
In order to formally make such a claim, we prove the following variant of Proposition \ref{easy}, in which
${\cal P}$ is {\em unnatural} if there is $\eps_0$ so that the following holds for every $0< \varepsilon < \eps_0$ and $n\geq n_0(\varepsilon)$:
either every $n$-vertex graphs is $\varepsilon$-close to ${\cal P}$, or every $n$-vertex graph does not belong to $\mathcal{P}$.
If ${\cal P}$ is not unnatural then it is (naturally) {\em natural}.

\begin{proposition}\label{easygeneral}
Let ${\cal P}$ be a natural property and let $q(\varepsilon)$ be its vertex query complexity, and $Q(\varepsilon)$ be its edge query complexity. Then
\begin{equation}\label{eqlowerggeneral}
Q(\eps) = \Omega(1/\eps)\;.
\end{equation}
In particular, $q(\varepsilon)=\Omega(\sqrt{1/\varepsilon})$.
\end{proposition}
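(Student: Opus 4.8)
The plan is to first note that, by the Goldreich--Trevisan relation $q(\varepsilon)\le 2Q(\varepsilon)\le q(\varepsilon)^2$ quoted above, the ``in particular'' clause follows from $Q(\varepsilon)=\Omega(1/\varepsilon)$, so it suffices to prove the latter. I would establish it by combining two ingredients. The first is a Lipschitz/interpolation step: starting from a pair consisting of a member of $\mathcal P$ and a graph that is $\varepsilon$-far from $\mathcal P$, both on the same vertex set, I would extract a ``boundary'' pair $(H',H)$ in which $H'\in\mathcal P$, $H$ is $\Omega(\varepsilon)$-far from $\mathcal P$, and $H,H'$ differ on only $O(\varepsilon n^2)$ of the $\binom n2$ pairs. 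The second is the standard ``needle in a haystack'' lower bound: a tester that must tell apart the distributions obtained by randomly relabelling $H'$ and $H$ has to query, with constant probability, one of the $O(\varepsilon n^2)$ disagreement pairs, and this costs $\Omega(1/\varepsilon)$ edge queries.

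In more detail: since $\mathcal P$ is natural, for every $\varepsilon_0>0$ there are $\varepsilon<\varepsilon_0$, an integer $n\ge n_{0}(\varepsilon)$, a graph $A\in\mathcal P$ on $n$ vertices, and a graph $B$ on $n$ vertices that is $\varepsilon$-far from $\mathcal P$; and since being $\varepsilon$-far is monotone in $\varepsilon$, the same pair $(A,B)$ is a witness for every parameter $\le\varepsilon$ on this vertex set. Take a maximal chain $A=H_0,H_1,\dots,H_d=B$ with $H_{i+1}$ obtained from $H_i$ by flipping one pair, so $d=|E(A)\triangle E(B)|$. Flipping one pair changes $\mathrm{dist}_{\mathcal P}$ by at most $1/n^2$, while $\mathrm{dist}_{\mathcal P}(H_0)=0$ and $\mathrm{dist}_{\mathcal P}(H_d)\ge\varepsilon$, so some $H:=H_j$ satisfies $\varepsilon/2\le\mathrm{dist}_{\mathcal P}(H)<\varepsilon/2+1/n^2\le\varepsilon$ (the last inequality for $n$ large, which we may assume as $n\ge n_0(\varepsilon)$). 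Thus $H$ is $(\varepsilon/2)$-far from $\mathcal P$, yet there is $H'\in\mathcal P$ with the set $\Delta$ of pairs on which $H,H'$ disagree satisfying $|\Delta|=\mathrm{dist}_{\mathcal P}(H)\cdot n^2<\varepsilon n^2$. Let $\mathcal D_{\mathrm{yes}}$ (resp. $\mathcal D_{\mathrm{no}}$) be the distribution of a uniformly random relabelling of $H'$ (resp. $H$); these are supported on $\mathcal P$ and on $(\varepsilon/2)$-far graphs. By Yao's principle it suffices to fool deterministic $q$-edge-query algorithms; coupling $\mathcal D_{\mathrm{yes}}$ and $\mathcal D_{\mathrm{no}}$ through a common random permutation $\pi$, the two resulting graphs agree outside $\pi(\Delta)$, and a standard argument bounds by $q\,|\Delta|/\binom n2\le 3\varepsilon q$ the probability that a $q$-query algorithm ever queries a pair of $\pi(\Delta)$; off this event the algorithm behaves identically on both inputs. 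Hence its acceptance probabilities on $\mathcal D_{\mathrm{yes}}$ and $\mathcal D_{\mathrm{no}}$ differ by at most $3\varepsilon q$, so it cannot $(\varepsilon/2)$-test $\mathcal P$ unless $3\varepsilon q\ge 1/3$, i.e. $Q(\varepsilon/2)\ge 1/(9\varepsilon)=\Omega(1/(\varepsilon/2))$.

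The point that genuinely requires care -- and the main obstacle -- is that this argument beats the trivial bound ``a tester may query all $\binom n2$ pairs'' only when $n=\Omega(1/\sqrt\varepsilon)$, so one must argue the witnesses from naturality can be taken on vertex sets of this size. If for arbitrarily small $\varepsilon$ there is a witness on $n\ge 1/\sqrt\varepsilon$ vertices, we are done: each such witness, via the downward monotonicity of witnesses and the interpolation above, gives $Q(\varepsilon'/2)=\Omega(1/\varepsilon')$ for $\varepsilon'$ in an interval with left endpoint $1/n^2$, and together with the monotonicity of $Q$ these cover a neighbourhood of $0$, yielding $Q(\varepsilon)=\Omega(1/\varepsilon)$ and hence $q(\varepsilon)=\Omega(\sqrt{1/\varepsilon})$. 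In the complementary regime -- all witnesses on fewer than $1/\sqrt\varepsilon$ vertices -- one has $\varepsilon n^2<1$, so ``$\varepsilon$-far'' coincides with ``not in $\mathcal P$''; thus $\mathcal P$ has both a member and a non-member on $n$ vertices, and picking such a pair adjacent in the single-flip graph gives two graphs differing in one slot $\{u,v\}$. A vertex-query tester promised correct on $n$-vertex inputs must distinguish them, but it sees that slot only when $\{u,v\}$ falls in its random sample of $q$ vertices, an event of probability $\Theta((q/n)^2)$, forcing $q=\Omega(n)$; feeding this into the requirement that the tester also be correct on all larger inputs it is promised to handle forces $\mathcal P$ to be non-splitting (all members, or no members) on all large vertex sets, i.e. $\mathcal P$ is unnatural -- a contradiction. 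Making this last case analysis precise, in particular tracking the interdependence of $\varepsilon$, the witness size $n$, and the threshold $n_0(\varepsilon)$, is where essentially all the remaining (largely bookkeeping) work sits; the two conceptual ingredients are the Lipschitz interpolation and the needle-in-a-haystack argument.
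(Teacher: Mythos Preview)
Your core argument---Lipschitz interpolation to locate a graph $H$ with $\mathrm{dist}_{\mathcal P}(H)\in[\varepsilon/2,\varepsilon)$, then a needle-in-a-haystack lower bound against the pair $(H,H')$ with $H'\in\mathcal P$ and $|E(H)\triangle E(H')|<\varepsilon n^2$---is exactly the paper's approach. Your explicit use of random relabellings and Yao's principle is in fact cleaner than the paper, which simply asserts that a tester with $<\frac{1}{40\varepsilon}$ edge queries ``has probability at most $1/10$ of querying one of the edges of $G_k\triangle H_k$'' without randomising over the input.

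Where you diverge from the paper is the final paragraph, and that entire case analysis is unnecessary. In the definition of \emph{unnatural} the threshold $n_0(\varepsilon)$ is implicitly existentially quantified: $\mathcal P$ is unnatural iff $\exists\varepsilon_0\;\forall\varepsilon<\varepsilon_0\;\exists n_0\;\forall n\ge n_0$ either all $n$-vertex graphs are $\varepsilon$-close to $\mathcal P$ or none belong to $\mathcal P$. Negating, a \emph{natural} $\mathcal P$ gives you, for each $\varepsilon$ in a sequence tending to $0$, an \emph{infinite} sequence of vertex sizes $n$ on which both a member of $\mathcal P$ and an $\varepsilon$-far graph exist. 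This is precisely how the paper reads it (``for every $\varepsilon_t$ there is a sequence of integers $\{n_k\}$''). So you may always take $n\ge 1/\sqrt{\varepsilon}$, and your ``complementary regime'' never arises. This matters, because your handling of that regime is also not sound as written: the conclusion ``$q=\Omega(n)$ on small $n$ forces $\mathcal P$ to be non-splitting on all large vertex sets'' does not follow, and the single-slot distinguishing argument again needs random relabelling to avoid a tester tuned to the specific pair. Drop the last paragraph, invoke the correct quantifier structure of ``natural'' to take $n$ large, and your proof is complete and essentially identical to the paper's.
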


The ``in particular'' part above follows directly from the Goldreich--Trevisan \cite{GT} theorem mentioned earlier.
Observe that the general lower bound given in (\ref{eqlowerggeneral}) is best possible since it is tight when ${\cal P}$ is the
property of having no edges, where $Q(\varepsilon)=O(1/\varepsilon)$.

\subsection{Main technical contributions and comparison to previous approaches}

\paragraph{Summary of previous approaches:}
The main reason why Szemer\'edi's regularity lemma is so useful when studying testing/estimation problems is
that an $\varepsilon$-regular partition of a graph $G$ determines (approximately) the values of $ind(F,G)$ for all small $F$.
Hence, on a very high level, the way one can estimate a graph's distance to a hereditary property ${\cal P}$ is to take a {\em single} $\varepsilon$-regular partition of $G$ (one such exists by the regularity lemma)
and then try to modify this partition using the smallest possible number of edge modifications, so that the new partition ``predicts'' that there are no induced copies of
graphs $F \not \in {\cal P}$ in the new graph $G'$. A key ``continuity'' feature one has to use at this stage is that if $G$ has a regular partition with certain edge densities between the clusters of the partition,
and one would like to modify $G$ so that in the new graph $G'$ one has a regular partition where the edge densities between the clusters will change on average by $\gamma$, then one can achieve this by modifying $(\gamma+o(1)) n^2$ edges of $G$. Fischer and Newman \cite{FN} critically relied on the fact that regular partitions in the sense of Szemer\'edi have this
continuity property. The approach of \cite{FN} was ineffective since although a regular partition has constant size (i.e., depending only on $\varepsilon$), this constant has tower-type dependence on $\varepsilon$. We should point that one of the key novel ideas of \cite{FN} was a method for obtaining the densities of a single Szemer\'edi partition of the input $G$.

The way Hoppen et al. \cite{HKLLS-1,HKLLS-2} managed to improve upon \cite{FN} (for hereditary ${\cal P}$)
was by first observing that in order to estimate $ind(F,G)$ for all small $F$, one does not need
the full power of Szemer\'edi's regularity lemma. Instead, one can use the weak regularity lemma
of Frieze and Kannan \cite{FK-2} which involves constants that are only exponential in $\varepsilon$.
The main reason why their proof gave a doubly exponential bound is that Frieze--Kannan regular partitions do not
(seem to) have the same continuity feature we mentioned in the previous paragraph with respect to Szemer\'edi partitions.
To overcome this, Hoppen et al. \cite{HKLLS-1,HKLLS-2} introduced a sophisticated method that somehow combines
working with Frieze--Kannan regular partitions in some parts of the proof, together with vertex partitions that have no regularity\footnote{Working with partitions that have no regularity requirements has the advantage that they trivially have the continuity property. Indeed, if we want to change the edge density between two sets $A,B$ by $\gamma$ we just add/remove $\gamma|A||B|$ edges. Needless to say that working with such partitions has various disadvantages resulting from their lack of regularity features.} features at all (these
are sometimes called GGR partitions, after \cite{GGR}) in other parts of the proof.

\paragraph{Our main technical contribution:}
Our main technical contribution in this paper establishes that Frieze-Kannan weak regular partitions ``almost'' satisfy the same
continuity feature we mentioned above with respect to Szemer\'edi partitions. What we show is
that one can indeed efficiently modify a Frieze--Kannan partition if one starts with a partition with guarantees slightly stronger
than those of Frieze--Kannan, and one is content with ending with a usual Frieze--Kannan partition. See Lemma \ref{lem:base_lemma}
for the precise statement, whose proof relies on a randomized-rounding-type argument. With the above continuity feature
at hand, we can now go back to the Fischer--Newman approach and turn it into an effective one, by taking full advantage of the
Frieze--Kannan lemma. One additional hurdle we need
to overcome in order to make sure we only incur an exponential loss in our proof, is a method for finding a Frieze--Kannan partition
of a graph using a constant number of queries. Here we introduce a variant of the method of Fischer--Newman tailored for Frieze--Kannan partitions, see Lemma \ref{lem:final_part_sig}.
The main tools we develop for proving Theorem \ref{thm:main}
turn out to be also applicable for proving Theorem \ref{thm:main_gen}. The reason why in Theorem \ref{thm:main_gen} we
have a double exponential loss is that it is not enough to estimate $ind(F,G)$ for a single $F$ (as in Theorem \ref{thm:main} thanks
to Lemma \ref{lem:induced}) but we instead need to control $ind(F,G)$ for all graphs $F$ of order $q(\varepsilon)$.
We expect Lemmas \ref{lem:final_part_sig} and \ref{lem:base_lemma} to be applicable in future studies related to efficient testing and estimation
of graph properties.

\paragraph{Paper overview:}
In Section \ref{sec:rob_final_and_proof} we introduce the two main lemmas in the paper, and show how they imply Theorem \ref{thm:main}.
These lemmas are proved in Sections \ref{sec:proof_lemma_1} and \ref{sec:proof_lemma_2}.
In Section \ref{sec:conc} we prove Theorem \ref{thm:main_gen}. We prove Proposition \ref{easy} at the end of Section \ref{sec:rob_final_and_proof} and Proposition \ref{easygeneral} at the end of Section \ref{sec:conc}.
We use $a=\mathrm{poly}(x)$ to denote the fact that $a$ is bounded from above (or below, when $0<x<1$) by $x^d$ for some fixed $d$, which is independent of $n$ or $\varepsilon$. Also, when we say that ``for every $a=\mathrm{poly}(x)$
there is $b=\mathrm{poly}(x)$'' we mean that for every $d$ there is $d'$ so that if $a \leq x^d$ then there is a $b \leq x^{d'}$.

\section{The Key Lemmas and Proof of Theorem~\ref{thm:main}}\label{sec:rob_final_and_proof}

Our goal in this section is to state Lemmas \ref{lem:final_part_sig} and \ref{lem:info_from_sig}
and then use them to derive Theorem \ref{thm:main}.
We prove these lemmas in Sections \ref{sec:proof_lemma_1} and \ref{sec:proof_lemma_2}.
At the end of this section we also prove Proposition \ref{easy}.

To state Lemmas \ref{lem:final_part_sig} and \ref{lem:info_from_sig} we need some definitions.
We first recall that given a graph $G=(V,E)$, an equipartition $A = \{V_{1},\dots, V_{k}\}$
of $V(G)$ is a partition satisfying $||V_i|-|V_j||\leq 1$.
Given a graph $G$ and subsets $X,Y\subseteq V(G)$, we use $e(X,Y)$ to denote the number of edges between $X$ and $Y$,
and $d(X,Y)=e(X,Y)/|X||Y|$ to denote the {\em density} between them.

\begin{definition}[Signature]\label{def:sig}
For an equipartition $A = \{V_{1},\dots, V_{t}\}$ of $V(G)$, a $(\gamma, \eps)$-signature of $A$ is a sequence of reals $S = (\eta_{i,j} )_{1\le i<j\le t}$, such that $|d(V_{i}, V_{j} ) - \eta_{i,j} | \le \gamma$ for all but at most $\eps \binom{t}{2}$ of the pairs $i < j$.
A $(\g,\g)$-signature is referred to as $\g$-signature.
\end{definition}

\begin{definition}[Index of a partition]\label{def:index_robust}
For an equipartition $A$ of a graph $V(G)$ into $t$ sets, we define the {\em index} of $A$ to be
$$
ind(A)=\frac{1}{t^{2}}\sum_{1\le i<j\le t}d^{2}(V_{i},V_{j})\;.
$$
\end{definition}

\begin{definition}[Final partition]
For a function $f:\mathbb{N}\to \mathbb{N}$ and $\g>0$, we say that an equipartition $A$ of $G$ consisting of $t$ sets is
$(f, \g)$-final if there exists no equipartition $B$ of $V(G)$ with at least $t$ and up to $f(t)$ sets for which $ind(B) \ge ind(A)+\g\;$.
\end{definition}

The above notion of a final partition is useful since (as we show later) every graph has such a partition and furthermore, we can design
an algorithm for finding a signature of one such partition of an input $G$.
The first key lemma leading to the proof of Theorem \ref{thm:main} does exactly that.

\begin{lemma}\label{lem:final_part_sig}
For every $k,\zeta>0$, and every $\g=\mathrm{poly}(\zeta)$ and $f_{\zeta}(x)=x \cdot 2^{\mathrm{poly}(1/\zeta)}$, there are $q=q_{\ref{lem:final_part_sig}}(\zeta,k)$, $N=N_{\ref{lem:final_part_sig}}(\zeta,k)$ and $T=T_{\ref{lem:final_part_sig}}(\zeta,k)$
so that
$$
q,N,T \leq \mathrm{poly}(k) \cdot 2^{\mathrm{poly}(1/\zeta)}
$$
and such that the following holds. If $G$ is a graph on at least $N$ vertices then there is an algorithm making at most $q$ queries to $G$, computing with probability at least $\frac{2}{3}$ a $\g$-signature of an $(f_{\zeta},\gamma)$-final partition of $G$ into at least $k$ and at most $T$ sets.
\end{lemma}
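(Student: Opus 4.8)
\emph{A purely existential version, and the value of $T$.}
First I would record that an $(f_\zeta,\gamma)$-final partition of the required size always exists, via an energy-increment argument that also fixes $T$. Starting from an arbitrary equipartition $A_0$ of $G$ into $k$ parts (possible since $N\ge k$), iterate: while the current $A_i$ is not $(f_\zeta,\gamma)$-final, the definition of finality produces an equipartition $A_{i+1}$ with $|A_i|\le|A_{i+1}|\le f_\zeta(|A_i|)$ and $ind(A_{i+1})\ge ind(A_i)+\gamma$; replace $A_i$ by it. Since $0\le ind\le 1$ this halts within $1/\gamma$ steps, and since each step multiplies the number of parts by at most $2^{\mathrm{poly}(1/\zeta)}$ while $\gamma=\mathrm{poly}(\zeta)$, the resulting final partition has between $k$ and $T:=k\cdot 2^{\mathrm{poly}(1/\zeta)}$ parts. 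So $T$ is as claimed, and — crucially — a final partition is always reached after only $O(1/\gamma)$ ``refine-and-gain-$\gamma$'' steps.

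\emph{The algorithm.}
The plan is to run this process on $G$ itself while inspecting only a bounded sample, by working with \emph{queryable} equipartitions: ones in which the part of a vertex $v$ is determined by the edges from $v$ to a bounded pivot set $W\subseteq V(G)$ (via a fixed rule), followed by a standard equitization of the resulting adjacency-profile classes (splitting each class into near-equal random pieces, which changes all densities and the index by only $o(1)$ and whose random choices need to be evaluated only on sampled vertices). Initialise $W=\emptyset$ and $A=$ a canonical equipartition into $k$ parts. Each round: (i) draw a fresh test set $X$ of $\mathrm{poly}(k)\cdot 2^{\mathrm{poly}(1/\zeta)}$ vertices and query all pairs inside $X$ together with all $X$-to-$W$ edges, yielding estimates of $ind(A)$ and of the density sequence of $A$ to within a small $\mathrm{poly}(\gamma)$; (ii) for each refinement of $A$ obtained by further splitting according to the adjacency profile to a freshly sampled set $W'$ of $\mathrm{poly}(1/\zeta)$ pivots — so the number of parts grows only by a factor $\le 2^{\mathrm{poly}(1/\zeta)}\le f_\zeta(|A|)/|A|$ — estimate its index as well; (iii) if one of these refinements beats the estimated $ind(A)$ by at least, say, $\gamma/2$, adopt it and add $W'$ to $W$; otherwise halt and output the estimated density sequence of $A$. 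By the existential argument the loop runs $O(1/\gamma)=\mathrm{poly}(1/\zeta)$ times (each genuine round raises $ind(A)$ by a positive $\mathrm{poly}(\gamma)$ amount once estimation errors are absorbed), so $|W|=\mathrm{poly}(1/\zeta)$, the final $A$ has at most $T$ parts, and the query cost is $q=\mathrm{poly}(k)\cdot 2^{\mathrm{poly}(1/\zeta)}$. The size of $X$ is dictated by a union bound: once all pivot samples are conditioned on, there are only $2^{\mathrm{poly}(1/\zeta)}\cdot\mathrm{poly}(k)$ queryable partitions in play across all rounds, and a Chernoff-type bound for the sampled index lets a single $X$ of the stated size estimate every relevant index to within $\mathrm{poly}(\gamma)$; taking $W'$ (resp. $X$) slightly larger makes the overall success probability at least $2/3$. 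This also fixes $N$.

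\emph{The crux: correctness of the stopping rule.}
The substance of the proof is that when the algorithm halts, $A$ really is $(f_\zeta,\gamma)$-final in $G$; equivalently, that if some equipartition $B$ with $|A|\le|B|\le f_\zeta(|A|)$ has $ind(B)\ge ind(A)+\gamma$, then one of the queryable refinements tried in step (ii) beats $ind(A)$ by $\ge\gamma/2$. Passing to the common refinement $A\vee B$ — whose index is $\ge ind(B)-o(1)$ since refining only raises the index (here one works with the size-weighted index, which agrees with $ind$ up to $o(1)$ on equipartitions) — this reduces to the statement: \emph{if $A$ has any refinement that gains $\gamma$ in index, then refining $A$ by the adjacency profiles to $\mathrm{poly}(1/\zeta)$ uniformly random pivots already gains $\ge\gamma/2$ with high probability}. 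This is precisely the ``effective Frieze--Kannan'' behaviour the paper isolates: all of the index that an arbitrary bounded refinement could extract from $A$ can in fact be extracted, up to constants and with only a $2^{\mathrm{poly}(1/\zeta)}$ blow-up in the number of parts, by a random \emph{queryable} refinement. I expect this to be the main obstacle, and the place where the paper's continuity/rounding lemma (Lemma~\ref{lem:base_lemma}) and the Frieze--Kannan toolkit enter — morally, a cut-norm witness for the discrepancy between $G$ and the step function of $A$ has to be recoverable, or approximable within the index-increment bookkeeping, from unions of neighbourhoods of a handful of vertices, which is what turns ``finding a witness'' into a bounded-query operation. Granting this, the remaining points are routine: the sampled index and densities of the single partition $A$ are within $\mathrm{poly}(\gamma)$ of the truth because $A$ lies in a bounded family of queryable partitions, and the output is then a genuine $\gamma$-signature of $A$ since a $\gamma$-signature allows $\gamma\binom{t}{2}$ exceptional pairs and error $\gamma$ on the rest.
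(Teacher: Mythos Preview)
Your approach is genuinely different from the paper's, and the step you flag as the ``crux'' is a real gap, not a routine detail.

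The paper does not iterate by refining along neighbourhood profiles of random pivots. Instead it tests, in one shot, \emph{every} partition property $\pi$ of every size $s\le f_\zeta(T)$ with densities on a $\mu$-grid, using the partition-property testers of \cite{GGR} together with the $\mathrm{poly}(s/\mu)$ query bound of \cite{SS} (amplified and union-bounded over the $(1/\mu)^{O(t^2)}$ such properties; this is Lemma~\ref{lem:GGR-test}). From the accepted $\pi$'s it reads off $M(s)\approx M_G(s):=\max\{ind(A):A\text{ an equipartition of }G\text{ into }s\text{ parts}\}$ to within $\gamma/8$ for every $s$, and outputs the maximiser at the first $s^\star$ where $M$ plateaus in the sense $M(s')\le M(s^\star)+3\gamma/4$ for all $s^\star<s'\le f_\zeta(s^\star)$. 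Correctness then follows directly from the two-sided estimate $|M(s)-M_G(s)|\le\gamma/8$ together with the existential energy-increment argument you wrote down. No statement of the form ``random pivot refinements capture all index gain'' is ever proved or used.

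Your crux is not supplied by the tools you point to. Lemma~\ref{lem:base_lemma} is about continuity of FK-regular$^\star$ partitions under \emph{edge modifications of $G$} (one can change densities between clusters while keeping the partition FK-regular); it is used only in the proof of Lemma~\ref{lem:info_from_sig}, not here. The algorithmic Frieze--Kannan lemma produces a single cut witness $(S,T)$ when $A$ is irregular, but $(f_\zeta,\gamma)$-finality is strictly stronger than FK-regularity --- it forbids \emph{any} equipartition of size up to $f_\zeta(|A|)$ from beating $ind(A)+\gamma$, not just cut-based refinements --- and there is no off-the-shelf result asserting that refining by adjacency profiles to $\mathrm{poly}(1/\zeta)$ random vertices recovers the full index gain achievable by an arbitrary bounded refinement. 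Without that, your stopping rule may halt at a partition $A$ that no pivot-refinement improves, while some other equipartition (not of neighbourhood-profile form) of size $\le f_\zeta(|A|)$ still has index $\ge ind(A)+\gamma$; then $A$ is not final and the output is not what the lemma promises.
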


We prove the above lemma is Section \ref{sec:proof_lemma_1}.
The following is the second key lemma, which we prove in Section \ref{sec:proof_lemma_2}.
In its statement we use the notion $ind(F, G)$ which we defined before the statement of Lemma \ref{lem:induced}.
What it roughly states, is that having a signature of $G$ (with good parameters) is enough for estimating
$G$'s distance to satisfying ${\cal P}$.

\begin{lemma}\label{lem:info_from_sig}
For every $h,\eps,\delta>0$, there are $\g=\g_{\ref{lem:info_from_sig}}(h,\eps,\delta)$,
$s=s_{\ref{lem:info_from_sig}}(h,\eps,\delta)$ and
$f_{\ref{lem:info_from_sig}}^{(h,\eps,\delta)}:\mathbb{N}\to\mathbb{N}$ so that
$$
\g=\mathrm{poly}(\eps\delta/h),~~
s=\mathrm{poly}(h/\eps\delta),~~
f_{\ref{lem:info_from_sig}}(x)= x \cdot 2^{{\mathrm{poly}(h/\eps\delta)}}
$$
and the following holds. For every family $\mathcal{H}$ of graphs, each on at most $h$ vertices, there exists a deterministic algorithm, that receives as an input a $\g$-signature $S$ of an $(f_{\ref{lem:info_from_sig}},\g)$-final partition $A$ into $t\ge s$ sets of a graph $G$ with $n\ge N_{\ref{lem:info_from_sig}}(h,\eps,\delta,t)= \mathrm{poly}(t) \cdot 2^{\mathrm{poly}(h/\eps\delta)}$ vertices, and distinguishes given any $\alpha$ between the following two cases:
\begin{itemize}
\item[(i)] $G$ is $(\alpha-\eps)$ close to some graph $G^{\prime}$ for which $ind(H,G^\prime)=0$ for every $H\in \mathcal{H}$.
\item[(ii)] $G$ is $\alpha$-far from every $G^{\prime}$ for which $ind(H,G^\prime)<\delta$ for every $H\in \mathcal{H}$.
\end{itemize}
\end{lemma}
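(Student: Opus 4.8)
The plan is to use the signature $S$ of the final partition $A$ to build a ``model'' of $G$ — a weighted complete graph $W$ on $t$ vertices with edge weights $\eta_{i,j}$ — and to argue that the distance from $G$ to the class $\{G' : ind(H,G')=0 \text{ for all } H \in \mathcal{H}\}$ is captured, up to an additive $\eps$ error, by an optimization problem over perturbations of $W$. Concretely, for a candidate weighting $(\beta_{i,j})$ of the $t$-clique, define its cost relative to $S$ as $\frac{1}{t^2}\sum_{i<j}|\beta_{i,j}-\eta_{i,j}|$; the algorithm will search (by brute force over a sufficiently fine net of weightings, whose size is $2^{\mathrm{poly}(h/\eps\delta)}$ after discretizing each $\beta_{i,j}$ to granularity $\mathrm{poly}(\eps\delta/h)$) for a weighting $(\beta_{i,j})$ of minimum cost such that the associated blow-up contains no $H \in \mathcal{H}$ as an induced subgraph ``in the counting sense'' — i.e. the expected induced-$H$-count predicted by $(\beta_{i,j})$ is below a threshold like $\delta/2$ for every $H \in \mathcal{H}$. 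It then outputs case (i) if this minimum cost is at most $\alpha - \eps/2$, and case (ii) otherwise.

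The first key step is the standard ``counting'' fact that for a partition with a $\g$-signature, the predicted induced-$H$-density $\sum$ over injections of the product of $\eta$'s and $(1-\eta)$'s approximates $ind(H,G)$ up to $O(h^2 \g) + O(h^2 \eps) + o(1)$ (the exceptional pairs and the within-cluster pairs contribute negligibly once $t \gg s = \mathrm{poly}(h/\eps\delta)$). This is why we need $\g = \mathrm{poly}(\eps\delta/h)$ and $t$ large. The second key step, and the place where the hypothesis that $A$ is $(f,\g)$-final is essential, is a \emph{regularity-type} statement: a final partition behaves, for counting purposes, almost like a Frieze--Kannan weak regular partition. The point is that if $A$ were far from weakly regular, one could refine it (into at most $f(t)$ parts) and gain $\g$ in the index — contradicting finality. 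Hence the counts predicted by the $\eta_{i,j}$'s are genuinely close to the true counts in $G$, \emph{and}, crucially, remain close after we modify $G$ by following a modified weighting $(\beta_{i,j})$: here is where the continuity feature advertised in the introduction enters — to realize a target density $\beta_{i,j}$ between $V_i$ and $V_j$ one adds/deletes roughly $|\beta_{i,j}-d(V_i,V_j)|\,|V_i||V_j|$ edges, so the total modification is $(\mathrm{cost}+o(1))n^2$, and the resulting graph $G'$ has its induced $H$-counts controlled by $(\beta_{i,j})$ (again up to $O(h^2\g)$ error, using that the fine partition inside each modified pair is still weakly regular).

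Putting this together: for direction ``(i) $\Rightarrow$ output (i)'', if $G$ is $(\alpha-\eps)$-close to some $G'$ with all induced $H$-counts zero, then $G'$ itself induces a partition (on the same vertex classes $V_i$) with densities $\beta_{i,j} := d_{G'}(V_i,V_j)$, these densities predict near-zero $H$-counts, and their cost against $S$ is at most $(\alpha - \eps) + O(h^2\g) + o(1) \le \alpha - \eps/2$, so the net search finds a weighting of cost $\le \alpha-\eps/2$. For direction ``(ii) $\Rightarrow$ output (ii)'', contrapositively, if the search finds a weighting $(\beta_{i,j})$ of cost $\le \alpha - \eps/2$ predicting $H$-counts below $\delta/2$, then modifying $G$ to match these densities produces a $G'$ within $(\alpha-\eps/2+o(1))n^2 \le \alpha n^2$ edges of $G$ with every true induced $H$-count below $\delta$ — contradicting that $G$ is $\alpha$-far from every such $G'$. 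The main obstacle I expect is the second step above: making precise the claim that a $(f,\g)$-\emph{final} partition supports accurate subgraph counting both before and after the density modification — i.e. proving the ``almost weak regularity'' of final partitions and verifying it is preserved under the rounding/modification procedure. This is presumably exactly the content of the separately-stated Lemma \ref{lem:base_lemma} (the randomized-rounding argument), which I would invoke here; the rest is bookkeeping with the counting lemma and choosing the net granularity and the various $\mathrm{poly}(h/\eps\delta)$ thresholds consistently.
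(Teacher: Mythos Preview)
Your overall architecture (search over discretized target weightings, accept if one of cost $\le \alpha-\eps/2$ predicts low $H$-counts, realize it via the randomized-rounding lemma) matches the paper's, and your second direction is essentially the paper's. The genuine gap is in the first direction, in the sentence ``$G'$ itself induces a partition (on the same vertex classes $V_i$) with densities $\beta_{i,j}:=d_{G'}(V_i,V_j)$, these densities predict near-zero $H$-counts.'' That inference requires the counting lemma for $G'$ with respect to $A$, hence that $A$ be Frieze--Kannan regular for $G'$. But $(f,\g)$-finality is a hypothesis about $G$, not $G'$; nothing prevents $A$ from being badly irregular for $G'$. Concretely, take $\mathcal{H}=\{C_5\}$ and let $G'$ be a random balanced bipartite graph on $(X,Y)$ with edge probability $p$, with $G$ obtained from $G'$ by $(\alpha-\eps)n^2$ random edits. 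A final partition $A$ of $G$ will typically have each $V_i$ split roughly evenly between $X$ and $Y$, so $d_{G'}(V_i,V_j)\approx p/2$ for all $i,j$; the weighted $t$-clique with all weights $p/2$ predicts a large $C_5$-count, and any $(\beta_{i,j})$ within $\ell_1$-distance $\alpha-\eps/2$ of $S$ (which also has all entries near $p/2$) does too. Your algorithm would wrongly output case (ii).

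The paper closes this gap by enlarging the search space: instead of weightings on the original $t$ parts, it searches over all \emph{extensions} $S'$ of $S$ to refinements of $A$ into up to $T=t\cdot 2^{\mathrm{poly}(h/\eps\delta)}$ parts, paired with discretized target signatures $C$ on those refinements. The point is Lemma~\ref{lem:resp_reg}: given any $G'$, there is a refinement $A'$ of $A$ (with at most $T$ parts) that is simultaneously $\g_0$-FK-regular$^\star$ for $G'$ \emph{and} for $G$, and moreover the extension $S'$ of $S$ to $A'$ remains a good signature of $A'$ for $G$ (this last fact uses finality again, via the index argument of Lemma~\ref{lem:add_index}). With regularity for $G'$ in hand, the $0$-signature of $A'$ in $G'$ really does predict near-zero $H$-counts, and the rest of your argument goes through. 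So the missing idea is precisely this refinement step and the accompanying Lemma~\ref{lem:resp_reg}; your brute-force search must range over the (still finite, still of size $2^{\mathrm{poly}(h/\eps\delta)}$) family of extensions of $S$, not just over reweightings on the fixed $t$ parts.
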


\begin{proof}[Proof (of Theorem \ref{thm:main})]\label{pf:main_lemma}
Suppose $\mathcal{P}$ is a hereditary graph property, and let $\alpha, \eps > 0 $.
Lemma \ref{lem:induced} with inputs $\eps /2$ and $\mathcal{P}$ asserts that there are
$$
h=M_{\ref{lem:induced}}(\eps/2),~~\delta=\delta_{\ref{lem:induced}}(\eps/2),~~n_0=n_{\ref{lem:induced}}(\eps/2)\;,
$$
so that if a graph $G$ on at least $n_{0}$ vertices is $\eps/2$-far from $\mathcal{P}$, then $ind(H,G)\ge  \delta$ for some $H \notin {\cal P}$ with $|V(H)|\leq h$.
We need to describe an algorithm making $2^{\mathrm{poly}(h/\delta,\log n_0)}$ queries to $G$ and distinguishes with probability at least
$2/3$ between the case that $G$ is $(\alpha-\eps)$-close to ${\cal P}$ and the case that $G$ is $\alpha$-far from ${\cal P}$.
Set
$$
\g=\g_{\ref{lem:info_from_sig}}(h,\eps/2,\delta),~~s=s_{\ref{lem:info_from_sig}}(h,\eps/2,\delta),~~f=f_{\ref{lem:info_from_sig}}^{(h,\eps/2,\delta)}\;.
$$
Finally, set $\zeta=\delta\varepsilon/h$ and observe that
$$
\g=\g_{\ref{lem:info_from_sig}}(h,\eps/2,\delta)=\mathrm{poly}(\eps\delta/2h)=\mathrm{poly}(\zeta)\;,
$$
that
$$
f(x)=f_{\ref{lem:info_from_sig}}^{(h,\eps/2,\delta)}(x)=x\cdot 2^{{\mathrm{poly}(2h/\eps\delta)}} = x\cdot 2^{{\mathrm{poly}(1/\zeta)}}\;,
$$
that
$$
s=s_{\ref{lem:info_from_sig}}(h,\eps/2,\delta) = \mathrm{poly}(2h/\eps\delta)=\mathrm{poly}(1/\zeta)\;.
$$
Also, note that by Proposition \ref{easy} we have $\mathrm{poly}(1/\zeta)=\mathrm{poly}(h/\delta)$.
Let $q,N,T$ be the parameters given by Lemma \ref{lem:final_part_sig} when applied with $k=s$, and $\zeta,\gamma,f$ defined above. (note that $\gamma$ and $f$ satisfy the assumptions of the lemma).
Lemma \ref{lem:final_part_sig} then guarantees that $q,N,T \leq  2^{\mathrm{poly}(1/\zeta)}\leq 2^{\mathrm{poly}(h/\delta)}$.

If $G$ has less than $N$ vertices then we can just ask about all the edges of $G$ and answer correctly with probability $1$.
The number of queries is then at most $N^2\leq 2^{\mathrm{poly}(h/\delta)}$ as needed.
If $G$ has more than $N$ vertices then we can use the algorithm of Lemma \ref{lem:final_part_sig} with the parameters $k,\zeta,\gamma,f$ defined above. The algorithm makes at most $q  \leq 2^{\mathrm{poly}(h/\delta)}$ queries and with probability at least $2/3$ returns a
$\g$-signature $S$ of an equipartition of $G$ into $s \leq t\leq T$ sets that is $(f,\g)$-final.
Let
$$
N'=N_{\ref{lem:info_from_sig}}(h,\eps/2,\delta,T)=\mathrm{poly}(T)\cdot 2^{\mathrm{poly}(h/\eps\delta)}=  2^{\mathrm{poly}(h/\delta)}\;.
$$
Again, if $G$ has less than $N_1=\max\{N',n_0\}$ vertices then we can just ask about all the edges of $G$ and answer correctly with probability $1$.
The number of queries is then at most $(N_1)^2 \leq 2^{\mathrm{poly}(h/\delta,\log n_0)}$ as needed.

Suppose then that $G$ has at least $\max\{N,N_1\}$ vertices.
Let ${\cal H}$ be the family of graph on at most $h$ vertices which do not satisfy ${\cal P}$.
Then we can now run the algorithm of Lemma \ref{lem:info_from_sig} on the signature $S$, with respect to ${\cal H}$,
with $\alpha'=\alpha-\eps/2$ and with $\eps/2$ instead of $\eps$ (note that we chose the parameters with $\eps/2$).
If the algorithm says that case $(i)$ holds (namely that $G$ is $(\alpha'-\eps/2)$-close to some $G'$
with $ind(H,G')=0$ for every $H \in {\cal H}$)
then we declare that $G$ is $(\alpha-\eps)$-close to ${\cal P}$, and if
the algorithm says that case $(ii)$ holds (namely that $G$ is $\alpha'$-far from every $G'$
with $ind(H,G') < \delta$ for every $H \in {\cal H}$) then we declare that $G$ is $\alpha$-far from ${\cal P}$.

Let us prove
the correctness of the above algorithm. If $G$ is $(\alpha-\eps)$-close to ${\cal P}$ then it is $(\alpha-\eps)$-close
to a graph $G'$ satisfying $ind(H,G')=0$ for every $H \in {\cal H}$. Since $\alpha-\eps=\alpha'-\eps/2$
%here used the effect that we count only injective maps
the algorithm will say that
case $(i)$ holds, hence the algorithm answers correctly in this case.
Suppose now that $G$ is $\alpha$-far from ${\cal P}$. Then any $G'$ that is $\alpha'$-close to $G$ must be $\eps/2$-far from
${\cal P}$. Hence, by Lemma \ref{lem:induced} in any such $G'$ we have $ind(H,G') \geq \delta$ for at least one $H \in {\cal H}$.
We conclude that $G$ is $\alpha'$-far from every $G'$
satisfying $ind(H,G') < \delta$ for every $H \in {\cal H}$.
Hence, the algorithm of Lemma \ref{lem:info_from_sig} will say that case $(ii)$ holds , so our algorithm will answer correctly in this case as well.
\end{proof}

\begin{proof}[Proof (of Proposition \ref{easy}):]

Recall that a blowup of a graph $H$ on $h$ vertices is the graph obtained from $H$ by replacing every vertex $i \in V(H)$ with
an independent set of vertices $S_i$, and replacing every edge $(i,j)$ with a complete bipartite
graph between $S_i$ and $S_j$. A $b$-blowup is a blowup where every $S_i$ is of size $b$.
Suppose $H_b$ is a $b$-blowup of $H$ and $f:[h] \rightarrow \{0,1\}$ is a $0/1$ assignment
to $H$'s vertices. Then $H^f_b$ is the graph obtained by taking $H_b$ and then turning every set of vertices
$S_i$ into a clique if and only if $f(i)=1$.

Let us say that $H$ is {\em good} if there is a $b=b(H)$ so that for every $f$ as above, we have $H^f_b \not \in {\cal P}$.
We first observe that if $H \not \in {\cal P}$ then $H$ is good, since we can take $b=1$. We also note
that a single vertex cannot be good, since if a vertex is good, then there must be a clique and an independent set
which do not satisfy ${\cal P}$, implying by Ramsey's theorem, that every large enough graph is not in ${\cal P}$,
contradicting our assumption that ${\cal P}$ is non-trivial.

Let $H$ be a graph not satisfying ${\cal P}$ (one exists since ${\cal P}$ is non-trivial).
By the previous paragraph, $H$ is good. If one of the induced subgraphs of $H$ on $|V(H)|-1$ vertices is also good,
then replace $H$ with this induced subgraph. Suppose $H$ is the (minimally) good graph we end up with.
By the previous paragraph we know that $h \geq 2$.
Let $H'$ be the graph obtained by removing vertex $h$ from $H$. Then $H'$ is not good.

Fix $0< \eps < \frac{1}{10|H|}$ and large $n$ and let $G_n$ be the blowup of $H$ where the vertex set that replaces vertex $h \in V(H)$, call it $V_h$, is of size $\eps n$ and all the other $h-1$ sets, call them $V_1,\ldots,V_{h-1}$, are of equal
size $(n-\eps n)/(h-1)$. Since $H'$ is not good, we know that given $b=(n-\eps n)/(h-1)$ there is an $f':[h-1]\rightarrow \{0,1\}$
so that $H^f_b \in {\cal P}$. For every $1 \leq i \leq h-1$ we turn $V_i$ into a clique if and only if $f(i)=1$.
Observe that every induced subgraph of $G_n$ that has no vertex in $V_h$ satisfies ${\cal P}$.

Let $b_0=b(H)$ be the constant from the definition of a good graph, and let $r(b_0)$ be the Ramsey number of $b_0$, that is,
the smallest integer so that every graph on $r$ vertices has a clique or an independent set on $b_0$ vertices.
We now claim that $G_n$ is $\eps/C$-far from ${\cal P}$, where $C=2h^4r^2$. In fact, we claim that if one changes
less than $\eps n^2/C$ edges between the sets $V_1,\ldots,V_h$ then (no matter what changes one performs within the sets $V_1,\ldots,V_h$) the resulting graph does not satisfy ${\cal P}$. Since ${\cal P}$ is hereditary, it is enough to show that there is still an induced subgraph not satisfying ${\cal P}$. Indeed, consider an $(h\cdot r)$-tuple $v_1,\ldots,v_{h\cdot r}$ of vertices,
obtained by picking, for every $1 \leq i \leq h$, a set of $r$ vertices from $V_i$ uniformly at random.
Fix $i <j$.
Since $|V_i||V_j| \geq \eps n^2/h^2$ then we modified at most a $\frac{1}{2r^2h^2}$ fraction of the pairs between $V_i,V_j$.
Therefore, the probability that our sample contains a modified pair of vertices is at most $\frac{1}{2h^2}$. Hence, by the union bound,
the probability that our sample contains some modified pair of vertices between some pair $V_i,V_j$ is at most $1/2$. We infer that there
is a choice of $h\cdot r$ vertices so that the induced graph on them is an $r$-blowup of $H$. By the choice
of $r$, we can find in this set a $b_0$-blowup of $H$ so that each set of $b_0$ vertices is a clique or an independent set.
Since $H$ is good, this means that this is graph does not satisfy ${\cal P}$.

Now, as we noted in the introduction (see equation (\ref{eqtesting})), by the definition of $M$ and $\delta$, a sample of $M/\delta$ vertices contains, with probability at least
$1/2$, a graph not in ${\cal P}$. As we noted earlier, every subgraph of $G_n$ not containing a vertex from $V_h$
satisfies ${\cal P}$. But to hit $V_h$ with probability at least $1/2$ one must sample at least $1/\varepsilon$ vertices. This means
that we must have $M/\delta \geq 1/\varepsilon$.
\end{proof}

\section{Proof of Lemma \ref{lem:final_part_sig}}\label{sec:proof_lemma_1}

The proof is similar to one in \cite{FN}. What they have shown is that for every $f,\gamma$, one can find an $(f,\gamma)$-final partition
with a constant, albeit huge tower-type, query complexity. What we do here is show that for restricted types of $f$, one can get
a much better bound. To do this we also need to rely on a recent result of \cite{SS}.

\subsection{Preliminary lemmas}

In this subsection we describe some preliminary lemmas that will be used in the next subsection in which we prove Lemma \ref{lem:final_part_sig}.
We will need the following Chernoff-type large deviation inequality.

\begin{lemma}\label{lem:chern}
Suppose $X_{1},\dots,X_{m}$ are $m$ independent Boolean random variables, so that for every $1 \leq i \leq m$ we have $\Pr[X_{i}=1]=p_{i}$. Let $E=\sum_{i=1}^{m}p_{i}$. Then,
$\Pr[|\sum_{i=1}^{m}X_{i}-E|\ge\theta m]\le2e^{-2\theta^{2}m}$.
\end{lemma}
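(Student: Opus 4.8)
The statement is a standard form of Hoeffding's inequality for bounded (here Bernoulli) random variables, so the plan is to run the exponential moment method, bounding the two one-sided tails separately and then combining them with a union bound to produce the factor of $2$. Concretely, I would first handle the upper tail $\Pr[\sum_i X_i - E \ge \theta m]$. For any $s>0$, applying Markov's inequality to $\exp\!\big(s(\sum_i X_i - E)\big)$ and using independence gives
\[
\Pr\Big[\sum_{i=1}^m X_i - E \ge \theta m\Big] \le e^{-s\theta m}\prod_{i=1}^m \mathbb{E}\big[e^{s(X_i-p_i)}\big].
\]
The crucial ingredient is Hoeffding's lemma applied to each $X_i$: since $X_i$ takes values in $[0,1]$, convexity of $t\mapsto e^{st}$ yields $e^{st}\le (1-t)+te^{s}$ for $t\in[0,1]$, hence $\mathbb{E}[e^{sX_i}]\le 1-p_i+p_i e^{s}$. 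Writing $\psi_i(s)=\log\mathbb{E}[e^{s(X_i-p_i)}]=-sp_i+\log(1-p_i+p_i e^s)$, one checks $\psi_i(0)=\psi_i'(0)=0$ and $\psi_i''(s)=q(1-q)\le \tfrac14$ where $q=p_ie^s/(1-p_i+p_ie^s)\in(0,1)$, so Taylor's theorem gives $\psi_i(s)\le s^2/8$.

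Plugging this in, $\Pr[\sum_i X_i - E \ge \theta m]\le \exp(-s\theta m + ms^2/8)$, and optimizing by taking $s=4\theta$ makes the exponent equal to $-2\theta^2 m$. The lower tail $\Pr[E - \sum_i X_i \ge \theta m]$ is handled symmetrically, either by repeating the argument with $s<0$ or, equivalently, by applying the upper-tail bound to the variables $1-X_i$ (which are again Boolean-valued in $[0,1]$ with complementary means). Each one-sided bound is at most $e^{-2\theta^2 m}$, and a union bound over the two events yields the claimed $\Pr[|\sum_i X_i - E|\ge\theta m]\le 2e^{-2\theta^2 m}$.

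I do not expect any genuine obstacle here: the argument is entirely classical, and the only computation that needs care is the verification that $\psi_i''\le\tfrac14$, which is routine. In fact, one could shorten the write-up considerably by simply invoking Hoeffding's inequality for sums of independent $[0,1]$-valued random variables as a black box, noting that in our setting $\sum_{i=1}^m(b_i-a_i)^2 = m$, so that with $t=\theta m$ the general bound $2\exp\!\big(-2t^2/\sum_i(b_i-a_i)^2\big)$ reduces immediately to $2e^{-2\theta^2 m}$. I would include the short self-contained derivation above for completeness, since it is only a few lines.
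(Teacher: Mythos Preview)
Your argument is correct and is the standard derivation via Hoeffding's lemma. Note, however, that the paper does not actually give a proof of this lemma: it is stated as a well-known Chernoff-type large deviation inequality and used as a black box. So there is nothing to compare against; your self-contained write-up simply supplies what the paper omits, and either the exponential-moment derivation you outline or the one-line appeal to Hoeffding's inequality for $[0,1]$-valued variables would be perfectly adequate.
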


%\begin{claim}\label{obs:ind_sig_part}
%Let $A$ be an equipartition of $G$ into $s$ sets and assume that $S=(\eta_{i,j})_{1\le i<j\le t}$ is an $\eps$-signature of $\mathcal{A}$. Then $|ind(\mathcal{A})-ind(S)|\le3\eps$.
%\end{claim}

\begin{definition}(Partition Properties)\label{def:ggr_part_prop}
A partition property is a triple $\pi=(s,\ell,u)$ where $s$ is an integer (the size of the partition property),
$\ell$ is a vector of $\binom{s}{2}$ reals $0 \leq \alpha_{i,j} \leq 1$ for each $1 \leq i<j \leq s$, and $u$ is a vector of $\binom{s}{2}$
reals $0 \leq \beta_{i,j} \leq 1$ for each $1 \leq i<j \leq s$.
We say that a graph $G$ satisfies $\pi$ if there is an equipartition $\{V_1,\dots,V_s\}$ of $V(G)$, such that $\alpha_{ij}\le d(V_{i},V_{j})\le\beta_{ij}$ for every $1 \le i<j \le s$.

Given $s$ and $\mu$ we use $\pi(s,\mu)$ to denote the family of partition properties $\pi$ of size $s$ in which
every $\alpha_{i,j}$ and $\beta_{i,j}$ is an integer multiple of $\mu$ (so $\pi(s,\mu)$ contains $\{0,\mu,2\mu,\dots,1\}^{2{s \choose 2}}$ partition properties).
Finally, define $\Pi(t,\mu)=\bigcup_{s \le t}\pi(s,\mu)$
\end{definition}

%We next extend Definition \ref{def:index_robust} to signatures (recall Definition \ref{def:sig}).

%\begin{definition}\label{def:sig_index}
%The index of a signature $S=(\eta_{ij})_{1\le i<j\le t}$ is $ind(S)=\frac{1}{t^{2}}\sum_{1\le i<j\le t}\eta_{ij}^{2}$.
%We similarly define the signature of a partition property
%\end{definition}

Note that each $\pi$ as above is one of the partition properties studied in \cite{GGR}, where it was shown that they are $\mu$-testable with query complexity $(1/\mu)^{\text{poly}(s)}$.
This was improved recently to $\mathrm{poly}(s/\mu)$ in \cite{SS}.
The next lemma states that with (roughly) the same query complexity we can in fact {\em simultaneously} test all properties in $\Pi(t,\mu)$.

\begin{lemma}\label{lem:GGR-test}
For every $t$ and $\mu>0$ there is $q=q_{\ref{lem:GGR-test}}(t,\mu)=\mathrm{poly}(t/\mu)$ satisfying the following.
There is a randomized algorithm, that given a graph $G$, makes $q$ queries to $G$ and with probability at least $2/3$, for every $\pi \in \Pi(t,\mu)$, distinguishes between
the case that $G$ satisfies $\pi$ and the case that $G$ is $\mu$-far from $\pi$.
\end{lemma}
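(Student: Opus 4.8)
The plan is to reduce to the single-partition-property case handled by \cite{SS} and then amplify and union bound. Concretely, fix any $\pi \in \Pi(t,\mu)$, say $\pi \in \pi(s,\mu)$ with $s \le t$. By \cite{SS} the property of satisfying $\pi$ is $\mu$-testable with $\mathrm{poly}(s/\mu) \le \mathrm{poly}(t/\mu)$ queries, and by the Goldreich--Trevisan theorem \cite{GT} we may take this tester to be \emph{canonical}: it samples a uniformly random set of $q^\ast$ vertices, where $q^\ast = q^\ast(t,\mu) = \mathrm{poly}(t/\mu)$ is a common upper bound over the finitely many $\pi \in \Pi(t,\mu)$, queries all pairs inside it, and then outputs accept or reject, being correct with probability at least $2/3$ in each of the two relevant cases --- that is, it accepts with probability at least $2/3$ when $G$ satisfies $\pi$ and rejects with probability at least $2/3$ when $G$ is $\mu$-far from $\pi$, with no requirement in the intermediate regime, which is all the lemma asks for. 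Denote this tester by $T_\pi$. Note that if $T_\pi$ uses internal randomness we may fold it into the sample, so that the outcome of $T_\pi$ is a deterministic function of its sample.

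To boost the success probability enough to survive a union bound over all of $\Pi(t,\mu)$, note that $|\Pi(t,\mu)| \le \sum_{s \le t}(\lceil 1/\mu \rceil + 1)^{2\binom{s}{2}} \le t \cdot (2/\mu)^{t^2}$, so $\log|\Pi(t,\mu)| = O(t^2 \log(1/\mu))$. Set $k = \lceil C t^2 \log(2/\mu)\rceil$ for a sufficiently large absolute constant $C$, chosen so that $2 e^{-k/18} \le \tfrac{1}{3|\Pi(t,\mu)|}$. The algorithm draws $k$ independent uniformly random $q^\ast$-subsets $X_1,\dots,X_k$ of $V(G)$ and queries every pair inside $X_1 \cup \dots \cup X_k$, a total of at most $(k q^\ast)^2 = \mathrm{poly}(t/\mu)$ queries (if $n$ is so small that this exceeds $\binom{n}{2}$, it simply reads all of $G$ and decides every $\pi$ exactly). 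For each $\pi \in \Pi(t,\mu)$ it then runs $T_\pi$ on each $X_i$, using the already-revealed edges, and outputs the majority of the $k$ outcomes as its verdict for $\pi$. This is well defined since all needed pairs have been queried, and the query complexity is $q_{\ref{lem:GGR-test}}(t,\mu) = (k q^\ast)^2 = \mathrm{poly}(t/\mu)$, as required.

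For correctness, fix $\pi$ and suppose $G$ satisfies $\pi$; the $\mu$-far case is symmetric. Since $X_1,\dots,X_k$ are independent and the outcome of $T_\pi$ depends only on $G$ and the sample, the $k$ indicator variables ``$T_\pi$ accepts $X_i$'' are i.i.d.\ Boolean variables, each equal to $1$ with probability at least $2/3$. Applying Lemma \ref{lem:chern} with $\theta = 1/6$, their sum drops below $k/2$ --- i.e.\ the majority is wrong --- with probability at most $2 e^{-k/18} \le \tfrac{1}{3|\Pi(t,\mu)|}$. A union bound over all $\pi \in \Pi(t,\mu)$ then yields that, with probability at least $2/3$, the majority verdict is simultaneously correct for every $\pi \in \Pi(t,\mu)$, which is precisely the claim.

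The argument is largely bookkeeping, and the two points to watch are the following. First, one must check that the amplification stays cheap: $k$ is only polynomial in $t$ and logarithmic in $1/\mu$, so $(kq^\ast)^2$ remains $\mathrm{poly}(t/\mu)$. Second, one must ensure the $k$ runs feeding the majority vote are genuinely independent, which is why $X_1,\dots,X_k$ are drawn as independent samples (their pairwise overlaps are harmless); by contrast the union bound across distinct $\pi$'s requires no independence, which is exactly what makes a single common sample suffice for all of $\Pi(t,\mu)$. If one prefers to present the tester as one that inspects a single induced subgraph (as in the definition of vertex query complexity), one may instead take $X$ of size $kq^\ast$, partition it into $k$ blocks, and either handle small $n$ by exhaustive querying or absorb the resulting $o(1)$ dependence between blocks into the slack in the Chernoff bound.
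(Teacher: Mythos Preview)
Your proof is correct and follows essentially the same approach as the paper: invoke the $\mathrm{poly}(t/\mu)$ tester of \cite{SS} for each fixed $\pi$, amplify by taking the majority over $O(\log|\Pi(t,\mu)|)=O(t^2\log(1/\mu))$ independent runs so that the per-$\pi$ error drops below $\tfrac{1}{3|\Pi(t,\mu)|}$, and finish with a union bound. The only cosmetic differences are that the paper uses disjoint blocks of a single sample (rather than independently drawn subsets) to secure independence, and does not explicitly pass through \cite{GT} to canonize the tester; neither affects the argument.
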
\label{lem:applied_ggr_test}

\begin{proof}
A result of \cite{SS} states that every $\pi \in \Pi(s,\mu)$ is $\mu$-testable with query complexity $q'=\mathrm{poly}(s/\mu)$.
Set $b=|\Pi(t,\mu)| \leq (1/\mu)^{t^2}$. Fix $\pi \in \Pi(t,\mu)$. If we execute the $\mu$-testing algorithm $10\log(b)$ times and then take the majority outcome\footnote{That is, we are doing the standard error reduction trick.},
then by a standard application of Lemma \ref{lem:chern} we get a new algorithm making $\log b \cdot \mathrm{poly}(s /\mu) \leq \log b \cdot \mathrm{poly}(t /\mu)$ queries, that distinguishes
between the case that $G$ satisfies $\pi$ and the case that $G$ is $\mu$-far from $\pi$, and errs with probability at most $1/3b$. Note that to do this
we may sample a set $Q$ of size $q'\cdot 10\log(b)$ and then execute the standard $\mu$-tester for $\pi$ on $10\log(b)$ disjoint sets of size $q'$ (thus
guaranteeing full independence between the $10\log(b)$ iterations). Since for each $\pi$, the random set $Q$ is such that the algorithm errs with probability
at most $1/3b$, we get by the union bound that the probability that it errs for some $\pi \in \Pi(t,\mu)$ is at most $1/3$. Finally, the query complexity
of this algorithm is $\log b \cdot \mathrm{poly}(t /\mu)=\mathrm{poly}(t /\mu)$.
\end{proof}

%This yields the following Lemma.

%\begin{lemma}\label{lem:ggr-test-applied}
%Let $s\ge2/\mu$ be fixed, and a parameter $s$. For all $\pi\in\Pi(s,\mu)$ and the associated $\alpha^{\pi}_{i,j},\beta^{\pi}_{i,j}$ defined in lemma \ref{lem:GGR-test},
%\begin{itemize}
%    \item  If there is no equipartition $A$ of $G$ into $s$ sets for which $\pi$ is an $s^2\mu$-signature, then $G$ is $\mu-$far from admitting the GGR property with $\pi,\alpha^{pi},\beta^{\pi}$.
%    \item If there is an equipartition $A$ of $G$ into $s$ sets for which $\pi$ is an $(\mu,0)$-signature, then $G$ admits the GGR property with $\pi,\alpha^{pi},\beta^{\pi}$.
%\end{itemize}
%\end{lemma}

%\begin{proof}
%First, testing the GGR-property is exactly the property to verify whether
%$S$ is an $(\eps, 0)$-signature for some partition of $G$. This immediately yields the second item in the assertion of the lemma.
%For the first item,  assume by contradiction that there is a graph $G^\prime$
%that is $\mu$-close to $G$ and that has an equipartition
%$A$ for which S is an ($\mu$, 0)-signature. Thus, $A$, considered as an equipartition of $G$, must have
%$|d_G (V_i,V_j)-d_{G^\prime}(V_i,V_j)|<\frac{1}{2}s\mu^2$ (as otherwise, $G^\prime$ will be more than $\mu$-far from $G$). Thus, $S$ is an $s^2\mu$-signature of $G^\prime$.
%\end{proof}
\begin{proof}[Proof (of Lemma \ref{lem:final_part_sig}):]
Given $k,\zeta,\g$ and $f_{\zeta}$ as in the statement of the lemma, we define
$T_0=k$ and for $i\ge1$ define $T_i=f_{\zeta}(T_{i-1})$.
Now set the following parameters.
$$
N=N_{\ref{lem:final_part_sig}}(k,\zeta)=T_{2/\g}=k\cdot 2^{\text{poly}(1/\zeta)},~~T=T_{\ref{lem:final_part_sig}}(k,\zeta)=T_{2/\g}=k\cdot 2^{\text{poly}(1/\zeta)}\;,
$$
and
$$
t=f_{\zeta}(T)=k\cdot 2^{\text{poly}(1/\zeta)},~~\mu=\frac{\g}{48(f_{\zeta}(T))^2}=\frac{1}{\mathrm{poly}(k)\cdot 2^{\mathrm{poly}(1/\zeta)}}\;.
$$

We now describe the algorithm for finding a signature $S$ satisfying the requirement of the lemma.
For what follows let $\pi'(s,\mu)$ be the partition properties in which $\beta_{i,j}=\alpha_{i,j}+\mu$ for every $1 \leq i < j \leq s$.
Also for each $\pi \in \pi'(s,\mu)$ define the index of $\pi$ to be $ind(\pi)=\frac{1}{t^{2}}\sum_{1\le i<j\le t}\alpha_{ij}^{2}$.
In the Step-$1$ we run the algorithm of Lemma \ref{lem:GGR-test} with the parameters $t,\mu$ defined above.
This is the only randomized part of the algorithm.
In the Step-$2$ of the algorithm we do the following.
\begin{itemize}
	\item[$(i)$] For each $k \leq s \leq t$ set $M(s)=\max_{\pi}ind(\pi)$ where the maximum is taken over all $\pi \in \pi'(s,\mu)$ which the algorithm of Step-$1$ accepted.
	\item[$(ii)$] Let $s^{\star}$ be the smallest number in $\{k,\dots ,T\}$ such that $M(s^\prime) \leq M(s^\star)+\frac{3}{4}\g$ for every $s^\prime \in \{s^\star +1,\dots ,f_{\zeta}(s^\star)\}$. If there exists such an $s^\star$, output the signature $S^\star$ that achieves the maximum over $s^\star$. Otherwise, the algorithm fails.
\end{itemize}

Note that the query complexity of the algorithm is $q=q_{\ref{lem:GGR-test}}(t,\mu)=\mathrm{poly}(t/\mu)=\mathrm{poly}(k)\cdot 2^{\mathrm{poly}(1/\zeta)}$, as needed.
Also, Lemma \ref{lem:GGR-test} guarantees that Step-$1$ of the above described algorithm succeeds with probability at least $2/3$.
It thus remains to show that assuming this event holds, Step-$2$ of the algorithm will return an $(f_{\zeta},\gamma)$-final partition. First of all note that if it succeeds then it returns a partition of size at least $k$ and at most $T$, as required.

The proof that if Step-$1$ succeeded, then Step-$2$ returns an $(f_{\zeta},\gamma)$-final partition is identical to the proof of Claim 5.5 in \cite{FN}, so we give a sketch of the proof.
First, the reader might be wondering why every graph necessarily has an $(f_{\zeta},\gamma)$-final partition as in the statement of the lemma.
Let us actually explain why every $G$ has an $(f_{\zeta},\gamma/2)$-final partition, while using the definitions we introduced above.
Start from an arbitrary equipartition $A_0$ of $G$ into $T_0=k$ sets, and let $ind_0=ind(A_0)$ denote the index of $A_0$ as in Definition \ref{def:index_robust}. If $A_0$ is $(f_{\zeta},\gamma/2)$-final then we are done.
If not, then there must be another partition $A_1$ of $G$ with at least $T_0$ and at most $f(T_0)=T_1$ parts, with index $ind(A_1) \geq ind(A_0)+\gamma/2$. Since $0 \leq ind(A) \leq 1$ for every equipartition,
we see that this process will eventually end up with a partition $A$ of size $k \leq s \leq T$ so that all partitions of $G$ into at least $s$ and at most $f(s)$ parts have index less than $ind(A)+\gamma/2$.
But this means that $A$ is $(f_{\zeta},\gamma/2)$-final. Note that we thus get that $G$ has a $(f_{\zeta},\gamma/2)$-final partition $A$ of size $s \leq T$.

Let us now explain how to turn the above existential proof into a proof of correctness of the algorithm describe earlier.
Let $M_G(s)$ denote the largest index of an equipartition of $G$ of size $s$.
First we claim that for every $k \leq s \leq t$,
\begin{equation}\label{eq:M_G(s) M(s)}
M(s) - \gamma/8 \leq M_G(s) \leq M(s) + \gamma/8.
\end{equation}
For the second inequality in \eqref{eq:M_G(s) M(s)}, let $A$ be an equipartition with $s$ parts such that $M_G(s) = ind(A)$. Let $\pi \in \pi'(s,\mu)$ be the partition property obtained from $A$ by rounding down the densities to the closest integer multiple of $\mu$. Then we have $|ind(A)-ind(\pi)|\leq 3\mu \leq \gamma/8$. Hence, $M(s) \geq ind(\pi) \geq ind(A) - \gamma/8 = M_G(s) - \gamma/8$.

For the first inequality in \eqref{eq:M_G(s) M(s)}, let $\pi \in \pi'(s,\mu)$ be a partition property which the algorithm accepted and such that $M(s) = ind(\pi)$. Then $G$ must be $\mu$-close to $\pi$ (as otherwise $\pi$ should have been rejected). Let $G'$ be a graph $\mu$-close to $G$ that satisfies $\pi$, and let $A$ be the vertex partition of $G'$ witnessing that $G'$ satisfies $\pi$. Note that when turning $G$ into $G'$, for each pair of parts of $A$, we change the density between this pair by at most $\mu s^2$. Hence, in $G$, the partition property $\pi$ is a $2\mu s^2$-signature of $A$ (here and in what follows, we view $\pi$ as a signature). So $|ind(A) - ind(\pi)| \leq 6\mu s^2 \leq \gamma/8$, using our choice of $\mu$. Now, $M_G(s) \geq ind(A) \geq ind(\pi) - \gamma/8 = M(s) - \gamma/8$. This proves \eqref{eq:M_G(s) M(s)}.

% We can $\pi$ as a signature (given by the numbers $(\alpha_{i,j})_{1 \leq i < j \leq s}$).

It follows from the existential proof above that there is $k \leq s^{\star} \leq T$ and an equipartition $A$ of $G$ into $s^{\star}$ parts which is $(f_{\zeta},\gamma/2)$-final. We can assume that $M_G(s^{\star}) = ind(A)$, because the equipartition satisfying this must also be final. We have $M_G(s') \leq M_G(s^{\star}) + \gamma/2$ for every $s^{\star} \leq s' \leq f_{\zeta}(s^{\star})$. By \eqref{eq:M_G(s) M(s)}, this implies that $M(s') \leq M(s^{\star}) + 3\gamma/4$ for every $s^{\star} \leq s' \leq f_{\zeta}(s^{\star})$. So the algorithm will return a partition.

Note that the algorithm does not necessarily return the same signature/partition-property as above $\pi$ that is $\mu$-close to the above partition $A$. The reason for the algorithm to choose a different partition is that there might be another partition of size $s$ with a larger index (which
is of course also $(f_{\zeta},\gamma)$-final) or there might be an $s^* < s$ with the same properties, or there might be other partitions with the same index. However, one can invert the reasoning in the previous paragraph and show that if a $\pi$ is returned then it must be the $\gamma$-signature of an $(f_{\zeta},\gamma)$-final partition.
\end{proof}

\section{Proof of Lemma \ref{lem:info_from_sig}}\label{sec:proof_lemma_2}

\subsection{Preliminary lemmas}

In this subsection we describe some preliminary lemmas that will be used in the next subsection in which we prove Lemma \ref{lem:info_from_sig}.
We start with introducing the Frieze--Kannan regularity lemma \cite{FK-2,FK-1}.
We first state their notion of $\gamma$-regularity.

\begin{definition}[Frieze--Kannan Regularity \cite{FK-1}]\label{def:fk_regularity} Let $G=(V,E)$ be a graph and $A=\{V_{1}\dots,V_{k}\}$ be an equipartition of $V(G)$. For a subset $X \subseteq V$ and $1 \leq i \leq k$ denote $X_i=X\cap V_i$.
We say that $A$ is $\gamma$-Frieze--Kannan-regular if:

\begin{equation}
    d_{\square}^{A}(G) := \max\limits_{ S,T \subseteq V} \frac{1}{n^2}\biggl\lvert \sum_{i,j\in[k]^2}\biggl (d(S_i,T_j)-d_{ij}\biggr )|S_i||T_j|\biggr\rvert<\g
\end{equation}
\end{definition}

Roughly speaking, a partition $A$ is $\gamma$-Frieze--Kannan-regular, or $\gamma$-FK-regular for short, if we can estimate
the number of edges between large sets $S,T$ from the intersection sizes $S \cap V_i$ and $T \cap V_i$.
We will also need the following slightly stronger notion of weak regularity that was introduced in \cite{MS}.

\begin{definition}[Frieze--Kannan Regularity$^{\star}$ \cite{MS}]\label{def:str_fk_regularity}
In the setting of Definition \ref{def:fk_regularity}, we say that $A$ is $\g$-Frieze--Kannan Regular$^{\star}$ if:
\begin{equation}
    d_{\square}^{\star A}(G) := \max\limits_{ S,T \subseteq V} \frac{1}{n^2}\sum_{i,j\in[k]^2}\biggl |d(S_i,T_j)-d_{ij}\biggr ||S_i||T_j|<\g
\end{equation}
\end{definition}

The translation between these two notions will be crucial in Lemma \ref{lem:base_lemma} below.
Suppose $A = \{V_{1},\dots, V_{k}\}$ is an equipartition of $V(G)$. Then an equipartition
$B = \{W_{1}, . . . , W_{\ell}\}$ of $V(G)$ is said to {\em refine} $A$ if each $W_{i}\in B$ is contained in some $V_j \in A$.
The following lemma is proved in \cite{MS} using a simple variant of the original proof of Frieze and Kannan \cite{FK-1}.

\begin{lemma}[Frieze--Kannan Weak Regularity Lemma \cite{FK-1},\cite{MS}]\label{lem:fk_reg_lem} For every $k_{0}$ and $\g >0$ there is $T=T_{\ref{lem:fk_reg_lem}}(k_{0},\g)= k_{0} \cdot 2^{\mathrm{poly}(1/\g)}$ so that the following holds for every graph $G$ on at least $T$ vertices. If $A$ is an equipartition of $V(G)$ into at most $k_{0}$ sets, then there is a refinement $B$ of $A$ into at most $T$ sets such that  $d_{\square}^{\star B}(G)<\g$.
\end{lemma}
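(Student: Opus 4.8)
I would carry out the usual energy-increment argument underlying the Frieze--Kannan regularity lemma, the one new point being to verify that the absolute values appearing \emph{inside} the sum in Definition~\ref{def:str_fk_regularity} do not destroy the increment. For an equipartition $A=\{V_1,\dots,V_k\}$ of $V(G)$ write $d_{ij}=d(V_i,V_j)$ and set
\[
q(A)=\frac1{n^2}\sum_{i,j\in[k]^2}|V_i|\,|V_j|\,d_{ij}^{\,2}\ \in[0,1],
\]
a normalization of the index of Definition~\ref{def:index_robust}. I will use two standard facts: $q(B)\ge q(A)$ whenever $B$ refines $A$ (convexity of $x\mapsto x^2$), and equitizing a partition on $n$ vertices perturbs $q$ by $o_n(1)$. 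The heart of the matter is the following defect claim: \emph{if $A$ is an equipartition into $k$ parts with $d_{\square}^{\star A}(G)\ge\gamma$, then $A$ has an equitable refinement $B$ into at most $Ck$ parts, $C$ an absolute constant, with $q(B)\ge q(A)+\gamma^2/2$, provided $n$ is large enough.}

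To prove the claim, pick $S,T\subseteq V(G)$ witnessing $\frac1{n^2}\sum_{i,j}|S_i|\,|T_j|\,\big|d(S_i,T_j)-d_{ij}\big|\ge\gamma$ (with $S_i=S\cap V_i$, $T_j=T\cap V_j$), and let $B_0$ be the common refinement of $A$ with $\{S,V\setminus S\}$ and $\{T,V\setminus T\}$, so $|B_0|\le 4k$. Fix an ordered pair $(i,j)$. Merging the (at most four) $B_0$-pieces of $V_i$ back into $\{S_i,\,V_i\setminus S_i\}$ and those of $V_j$ into $\{T_j,\,V_j\setminus T_j\}$ only decreases weighted sums of squared densities, so the contribution of $(i,j)$ to $q(B_0)$ is at least that of this $2\times 2$ coarsening; the elementary identity $\mathbb{E}[x^2]-(\mathbb{E}x)^2=\Var(x)\ge w\,(x_0-\mathbb{E}x)^2$, applied with $x_0=d(S_i,T_j)$, weight $w=|S_i||T_j|/(|V_i||V_j|)$ and $\mathbb{E}x=d_{ij}$, then shows that this contribution exceeds the contribution of $(i,j)$ to $q(A)$ by at least $|S_i||T_j|\,(d(S_i,T_j)-d_{ij})^2/n^2$. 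Summing over $(i,j)$ and writing $w_{ij}=|S_i||T_j|/n^2$, $x_{ij}=|d(S_i,T_j)-d_{ij}|\in[0,1]$, Cauchy--Schwarz yields
\[
q(B_0)-q(A)\ \ge\ \sum_{i,j}w_{ij}x_{ij}^2\ \ge\ \frac{\big(\sum_{i,j}w_{ij}x_{ij}\big)^2}{\sum_{i,j}w_{ij}}\ \ge\ \gamma^2,
\]
since $\sum_{i,j}w_{ij}x_{ij}\ge\gamma$ and $\sum_{i,j}w_{ij}=|S||T|/n^2\le 1$. This is precisely where the $\ell^1$-type defect of $d_{\square}^{\star A}$ still forces a quadratic energy gain. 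Replacing $B_0$ by an equitable refinement $B$ of it (chopping each part into equal chunks and absorbing a negligible remainder, keeping $B$ a refinement of $A$ and $|B|=O(k)$) changes $q$ by $o_n(1)\le\gamma^2/2$ for $n$ large, which proves the claim.

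Given the claim I would iterate, starting from the prescribed equipartition $A_0$ with $|A_0|\le k_0$: as long as the current equipartition fails to be $\gamma$-FK$^\star$-regular, replace it by the refinement furnished by the claim. Since $q\in[0,1]$ and grows by at least $\gamma^2/2$ at each step, the process stops after at most $2/\gamma^2$ steps; each step multiplies the number of parts by the absolute constant $C$, so the final equipartition $B$ refines $A_0$, satisfies $d_{\square}^{\star B}(G)<\gamma$, and has $|B|\le k_0\cdot C^{2/\gamma^2}=k_0\cdot 2^{\mathrm{poly}(1/\gamma)}=:T$; and one checks (exactly as in the classical proofs) that taking $n\ge T$ makes every equitizing step legitimate. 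I expect the only fiddly part to be precisely this equitability bookkeeping — keeping each refinement a refinement of $A_0$, bounding its size by a constant multiple of the previous one, and controlling the induced change in $q$ — which is entirely routine and identical to the corresponding step in the proofs of the Szemer\'edi and Frieze--Kannan regularity lemmas; the genuinely new content is the one-line Cauchy--Schwarz displayed above.
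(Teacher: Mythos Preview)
The paper does not supply its own proof of this lemma: it attributes the result to \cite{FK-1} and \cite{MS}, and the only trace of the argument appearing in the paper is the single-step increment quoted just before Lemma~\ref{lem:robust_to_regular} (if $A$ on $t$ parts is not $\eps$-FK-regular$^\star$ then it has a refinement into at most $16t/\eps^4$ parts with $ind(B)\ge ind(A)+\eps^4/2$). Your plan follows exactly this energy-increment scheme; the step you single out as the genuinely new content --- the Cauchy--Schwarz inequality turning the $\ell^1$-type defect $\sum_{i,j}w_{ij}x_{ij}\ge\gamma$ into an $\ell^2$ energy gain $\sum_{i,j}w_{ij}x_{ij}^2\ge\gamma^2$ --- is correct, and in fact yields a quantitatively sharper per-step increment ($\gamma^2$ rather than the $\eps^4/2$ the paper quotes) with a correspondingly smaller refinement.

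One point deserves a caveat: your claim that the equitizing multiplier $C$ is an \emph{absolute} constant. If some cell of $B_0$ happens to be tiny, the standard chop-and-redistribute procedure moves a $\Theta(|B_0|/|B|)$ fraction of the vertices and perturbs $q$ by roughly that amount; keeping this perturbation below $\gamma^2/2$ typically forces $|B|=\Theta(|B_0|/\gamma^2)$ rather than $\Theta(|B_0|)$, so $C$ depends polynomially on $1/\gamma$. This does not damage the conclusion --- the iteration still terminates after $O(1/\gamma^2)$ steps and produces at most $k_0\cdot 2^{\mathrm{poly}(1/\gamma)}$ parts --- but you should either relax the claim on $C$ or say explicitly how tiny cells are absorbed without a $\gamma$-dependent blowup.
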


Let us now extend the definition of $d_{\square}$ to distance between pairs of weighted graph,
where a weighted graph $R$ is a complete graph, so that every edge $(i,j)$ is assigned a weight $0 \leq R(i,j) \leq 1$.

If $R,R'$ are two weighted graphs on $n$ vertices then we define
\begin{equation}\label{eq1dis}
d_1(R,R')=\frac{1}{n^2}\sum_{i < j}|R(i,j)-R'(i,j)|\;,
\end{equation}
and
\begin{equation}\label{eqsquaredis}
d_{\square}(R,R')=\max_{\alpha,\beta}\frac{1}{n^2}\left|\sum_{i < j}\alpha(i)\beta(j)(R(i,j)-R'(i,j))\right|\;,
\end{equation}
where the maximum is taken over all functions $\alpha,\beta:[n] \rightarrow [0,1]$.

\begin{definition}[$ind(F,R)$]\label{def:ind}
Let $R$ be a weighted graph on $[k]$ and let $\varphi$ be an injective function $\varphi:V(F)\to [k]$. We set
$$
ind_{\varphi}(F,R)=\prod_{i<j\in E(F)}R(\varphi(i),\varphi(j))\prod_{i<j \not \in E(F)}(1-R(\varphi(i),\varphi(j)))
$$
In the case of $\varphi$ not being injective, we define
$$
ind_{\varphi}(F,R)=0
$$
Denoting by $\Phi$ the set of functions from $V(F)$ to $[k]$, we define
\begin{equation}\label{eqindR}
ind(F,R)=\frac{1}{|\Phi|}\sum_{\varphi \in \Phi}ind_{\varphi}(F,R)\;.
\end{equation}
\end{definition}

Note that we can think of a signature $S=(\eta_{i,j})_{1\le i<j\le t}$ as a weighted graph on $t$ vertices.
This means that for a pair of signatures $S,S'$ we can define $d_{1}(S,S')$ and $d_{\square}(S,S')$ as in (\ref{eq1dis}) and (\ref{eqsquaredis}) respectively, and we can also define $ind(F,S)$ as in (\ref{eqindR}). We will need the following lemmas from \cite{HKLLS-1}
%the lemmas are for non injective tolerant case, but remain almost the same, should we mentio it?
\begin{lemma}\label{prop:small_var_under_regularity}
Suppose $R,R'$ are two weighted graphs on $n$ vertices, and
$H$ is a graph on $h$ vertices. Then for any $\g\ge d_{\square}(R,R')$ and $n\ge\frac{2}{\g}$, we have
$|ind(H,R)-ind(H,R')|\le 2h^{2}\cdot \g$
\end{lemma}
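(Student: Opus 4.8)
The plan is a hybrid (telescoping) argument over the $\binom{h}{2}$ vertex-pairs of $H$, exploiting that for a fixed $\varphi$ the quantity $ind_\varphi(H,R)$ is the product over all pairs $\{i,j\}\subseteq V(H)$ of the factor $c_{ij}(R(\varphi(i),\varphi(j)))$, where $c_{ij}(x)=x$ if $ij\in E(H)$ and $c_{ij}(x)=1-x$ otherwise; in particular it is multilinear in the $\binom{h}{2}$ values $R(\varphi(i),\varphi(j))$. To avoid dealing with injectivity at first, adopt the convention $R(x,x)=R'(x,x)=0$ and set $\widetilde{ind}(H,R)=\frac{1}{n^h}\sum_{\varphi:V(H)\to[n]}\prod_{\{i,j\}}c_{ij}(R(\varphi(i),\varphi(j)))$, the sum now running over \emph{all} maps. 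The product equals $ind_\varphi(H,R)$ whenever $\varphi$ is injective, and there are at most $\binom{h}{2}n^{h-1}$ non-injective maps, each contributing at most $1$, so $|\widetilde{ind}(H,R)-ind(H,R)|\le\binom{h}{2}/n$ and likewise for $R'$; since $n\ge2/\g$, these two errors together are at most $h^2\g/2$. It thus suffices to prove $|\widetilde{ind}(H,R)-\widetilde{ind}(H,R')|\le h(h-1)\g$.

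Enumerate the pairs as $p_1,\dots,p_m$ with $m=\binom{h}{2}$, and for $0\le\ell\le m$ let $W^{(\ell)}_\varphi$ denote the product above with $R$ replaced by $R'$ in the factors indexed by $p_1,\dots,p_\ell$, so $W^{(0)}_\varphi$ and $W^{(m)}_\varphi$ are the two products and $\widetilde{ind}(H,R)-\widetilde{ind}(H,R')=\sum_{\ell=1}^m\frac{1}{n^h}\sum_\varphi(W^{(\ell-1)}_\varphi-W^{(\ell)}_\varphi)$. I would then bound each of the $m$ inner averages by $2\g$. Fix $\ell$ with $p_\ell=\{a,b\}$, and write $\varphi=(\psi;x,y)$ where $\psi$ is the restriction of $\varphi$ to $V(H)\setminus\{a,b\}$ and $x=\varphi(a)$, $y=\varphi(b)$. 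Each factor of $W^{(\ell-1)}_\varphi$ other than the one indexed by $p_\ell$ corresponds to a pair $p\ne p_\ell$ and hence depends only on $\psi$ when $p\cap\{a,b\}=\emptyset$, only on $(\psi,x)$ when $p\cap\{a,b\}=\{a\}$, and only on $(\psi,y)$ when $p\cap\{a,b\}=\{b\}$; grouping them gives, for each $\psi$, a number $C_\psi\in[0,1]$ and functions $\alpha_\psi,\beta_\psi:[n]\to[0,1]$ with $W^{(\ell-1)}_\varphi=C_\psi\,\alpha_\psi(x)\,\beta_\psi(y)\,c_{p_\ell}(R(x,y))$ and the analogous identity with $R'$. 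Since $c_{p_\ell}(R(x,y))-c_{p_\ell}(R'(x,y))=\pm(R(x,y)-R'(x,y))$ with a sign depending only on whether $p_\ell\in E(H)$, summing over $\varphi$ gives $\sum_\varphi(W^{(\ell-1)}_\varphi-W^{(\ell)}_\varphi)=\pm\sum_\psi C_\psi\sum_{x,y\in[n]}\alpha_\psi(x)\beta_\psi(y)(R(x,y)-R'(x,y))$.

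Finally I would bound the inner double sum by $2n^2\g$ using $d_\square(R,R')\le\g$: since $R,R'$ are symmetric with zero diagonal,
\[
\sum_{x,y\in[n]}\alpha_\psi(x)\beta_\psi(y)(R(x,y)-R'(x,y))=\sum_{x<y}\big(\alpha_\psi(x)\beta_\psi(y)+\alpha_\psi(y)\beta_\psi(x)\big)(R(x,y)-R'(x,y)),
\]
which is a sum of two expressions of the form $\sum_{x<y}\alpha(x)\beta(y)(R(x,y)-R'(x,y))$ with $\alpha,\beta:[n]\to[0,1]$, each of absolute value at most $n^2 d_\square(R,R')\le n^2\g$ by definition of $d_\square$. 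As $C_\psi\le1$ and there are at most $n^{h-2}$ choices of $\psi$, it follows that $\frac{1}{n^h}|\sum_\varphi(W^{(\ell-1)}_\varphi-W^{(\ell)}_\varphi)|\le 2\g$; summing over $\ell$ yields $|\widetilde{ind}(H,R)-\widetilde{ind}(H,R')|\le h(h-1)\g$, and combining with the first paragraph, $|ind(H,R)-ind(H,R')|\le h(h-1)\g+h^2\g/2\le2h^2\g$. The only real difficulty is the bookkeeping in the second paragraph — sorting the remaining factors into a constant, a function of $a$, and a function of $b$ — along with the one genuine subtlety that the resulting bilinear form runs over ordered pairs $(x,y)$ whereas $d_\square$ is defined via $\sum_{i<j}$, which forces the symmetry split and hence the factor of $2$.
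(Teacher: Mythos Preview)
Your proof is correct. Both your argument and the paper's follow the same two-step outline: pass to the variant $\widetilde{ind}$ (the paper writes $ind'$) summing over \emph{all} maps $\varphi:V(H)\to[n]$, control $|\widetilde{ind}-ind|$ by $O(h^2/n)\le O(h^2\g)$ via the crude count of non-injective maps, and then bound $|\widetilde{ind}(H,R)-\widetilde{ind}(H,R')|$ by $O(h^2)\,d_\square(R,R')$. The only difference is in this last step: the paper simply quotes it as Lemma~3.2 of \cite{HKLLS-1}, whereas you supply the standard self-contained hybrid/telescoping argument over the $\binom{h}{2}$ pairs of $H$ (this is exactly the usual counting-lemma proof for the cut norm). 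Your handling of the ordered-pairs-versus-$i<j$ mismatch in the definition of $d_\square$ via the symmetry split is the correct fix, and the resulting extra factor of $2$ still leaves you comfortably inside the target bound $2h^2\g$.
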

\begin{proof}
Let us define a similar notion to $ind(F,R)$, but with respect to non injective functions as well.
For every $\varphi\in\Phi$ (we defined $\Phi$ before (\ref{eqindR})) we set
$$
ind_{\varphi}'(F,R) =\prod_{i<j\in E(F)}R(\varphi(i),\varphi(j))\prod_{i<j \not \in E(F)}(1-R(\varphi(i),\varphi(j)))\;,
$$
and
$$
ind'(F,R)=\frac{1}{|\Phi|}\sum_{\varphi \in \Phi}ind'_{\varphi}(F,R)\;.
$$
Lemma 3.2 in \cite{HKLLS-1} states that if $R,R'$ are two weighted graphs on $n$ vertices, and $H$ is a graph on $h$ vertices, then  $|ind'(H,R)-ind'(H,R')|\le h^{2}\cdot d_{\square}(R,R')$. Now set some $\g\ge d_{\square}(R,R')$ and assume that $n\ge\frac{2}{\g}$.
By Bernoulli's inequality $(n>h)$ we have
$$
|ind'(H,R)-ind(H,R)|\le 1-\frac{(n-h)^h}{n^h}=1-\left(1-\frac{h}{n}\right)^h \leq \frac{h^2}{n}\;.
$$
Thus,
$$
|ind(H,R)-ind(H,R')|\le|ind'(H,R)-ind'(H,R')|+\frac{2h^2}{n}\le h^{2}\cdot d_{\square}(R,R^{\prime}) + h^{2}\g\le 2h^{2}\cdot \g\;,
$$
as desired.
\end{proof}

Given a graph $G$ on $n$ vertices, and an equipartition $A=\{V_1,\dots,V_k\}$, we define the graph $G_A$ on $V(G)$ to be the weighted graph with weights $G_A(u,v)=d(V_i,V_j)$ for every $u \in V_i$ and $v \in V_j$. Let $S_A$ be the $0$-signature of $A$, that is, the weighted graph on $k$ vertices with $S(i,j)=d(V_i,V_j)$. Observe that if $k$ divides $n$ (so all sets of $A$ are of equal size) then $ind(H,G_A)$ is almost the same as $ind(H,S_A)$. It is not hard to see that for general equipartitions these quantities do not differ my much.

\begin{lemma}\label{lem:blow_up_graph}
Given a graph $G$ on $n$ vertices, and an equipartition $A=\{V_1,\dots,V_k\}$, let $G_A$ and $S_A$ be defined as above. Then $|ind(H,G_A)-ind(H,S_A)|\le \frac{2h^2}{k}+\frac{2kh}{n}$ for every graph $H$ on $h$ vertices.
\end{lemma}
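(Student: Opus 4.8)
The plan is to compare $ind(H,G_A)$ and $ind(H,S_A)$ by carefully relating the averaging over functions $\varphi:V(H)\to V(G)$ (which defines $ind(H,G_A)$) to the averaging over functions $\psi:V(H)\to[k]$ (which defines $ind(H,S_A)$). The key observation is that since $G_A$ is a weighted graph whose weights only depend on which parts the endpoints lie in, $ind_\varphi(H,G_A)$ depends on $\varphi$ only through the composed map $\psi=p\circ\varphi$, where $p:V(G)\to[k]$ sends each vertex to the index of its part. So I would first rewrite $ind(H,G_A)=\frac{1}{n^h}\sum_{\varphi}ind_\varphi(H,G_A)=\frac{1}{n^h}\sum_{\psi:V(H)\to[k]} N(\psi)\cdot ind'_\psi(H,S_A)$, where $N(\psi)=\prod_{a\in V(H)}|V_{\psi(a)}|$ is the number of $\varphi$ lifting $\psi$, and $ind'_\psi$ is the (non-injective-allowing) version of Definition~\ref{def:ind} applied to $S_A$. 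Note that $ind_\varphi(H,G_A)$ is $0$ unless $\varphi$ is injective, but $N(\psi)ind'_\psi$ accounts for both injective and non-injective $\varphi$; this discrepancy is exactly a lower-order term I will need to bound.

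Next I would compare the two averaging measures on functions $\psi:V(H)\to[k]$: the "true" uniform measure with weight $1/k^h$ each (which gives $ind'(H,S_A)$, and then $ind(H,S_A)$ after correcting for injectivity), versus the measure with weight $N(\psi)/n^h$ each (coming from $ind(H,G_A)$). Since $A$ is an equipartition, each $|V_i|$ is $n/k\pm 1$, so $\bigl|\frac{N(\psi)}{n^h}-\frac{1}{k^h}\bigr|\le \frac{1}{k^h}\cdot\bigl((1+\tfrac{k}{n})^h-1\bigr)\le \frac{1}{k^h}\cdot\frac{2kh}{n}$ for $n$ large relative to $kh$. Summing over all $k^h$ functions $\psi$ and using $0\le ind'_\psi\le 1$, this shows $|ind(H,G_A)-ind'(H,S_A)|$ differs from the "injectivity correction" by at most $\frac{2kh}{n}$.

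Finally I would handle the injectivity correction. The fraction of functions $\psi:V(H)\to[k]$ that are not injective is at most $\binom{h}{2}/k\le h^2/k$, and the same Bernoulli-type bound used in the proof of Lemma~\ref{prop:small_var_under_regularity} gives $|ind'(H,S_A)-ind(H,S_A)|\le h^2/k$; an analogous argument (or simply combining with the non-injective-$\varphi$ discrepancy noted above, which is also at most $h^2/n\le h^2/k$) absorbs the remaining terms. Collecting the bounds $\frac{2kh}{n}$ and $\frac{2h^2}{k}$ (the latter covering both the injectivity correction on the $S_A$ side and the non-injective-$\varphi$ contribution on the $G_A$ side) yields the claimed estimate $|ind(H,G_A)-ind(H,S_A)|\le \frac{2h^2}{k}+\frac{2kh}{n}$. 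I expect the main bookkeeping obstacle to be keeping the injectivity corrections on the two sides aligned — one must be careful that the "drop non-injective $\varphi$" step on the $G_A$ side and the $ind'$-vs-$ind$ step on the $S_A$ side are not double-counted — but each individual estimate is routine.
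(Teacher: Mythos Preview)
Your plan is correct and follows essentially the same route as the paper. The paper's proof introduces the auxiliary quantity $ind'$ (allowing non-injective maps), cites Inequality~(5) of \cite{HKLLS-1} for the bound $|ind'(H,G_A)-ind'(H,S_A)|\le \tfrac{2kh}{n}$, applies the injectivity corrections $|ind'(H,G_A)-ind(H,G_A)|\le h^2/n$ and $|ind'(H,S_A)-ind(H,S_A)|\le h^2/k$, and finishes by the triangle inequality; your measure-comparison step (bounding $|N(\psi)/n^h-1/k^h|$) is exactly a self-contained derivation of that cited inequality, and your two injectivity corrections coincide with the paper's.
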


\begin{proof}
We use the definition of $ind'(F,R)$ introduced in the proof of Lemma \ref{prop:small_var_under_regularity}.
By Inequality (5) in \cite{HKLLS-1} we have $|ind'(H,G_A)-ind'(H,S_A)|\le\frac{2kh}{n}$.
Note that with the definitions above, as in the proof of Lemma \ref{prop:small_var_under_regularity},
$$
|ind'(H,G_A)-ind(H,G_A)|\le \frac{h^2}{n}\ \text{and }|ind'(H,S_A)-ind(H,S_A)|\le \frac{h^2}{k}\;.
$$
Thus, we have
$$|ind(H,G_A)-ind(H,S_A)|\le \frac{2h^2}{k}+\frac{2kh}{n}\;,$$
as desired.
\end{proof}

We now combine the above facts to conclude that a signature of a $\gamma$-FK-partition of a graph gives a good approximation of $ind(H,G)$.

\begin{lemma}\label{lem:sample_with_sig} For every $h,k$ and $\delta>0$ there are
$$
\g=\g_{\ref{lem:sample_with_sig}}(h,\delta)=\mathrm{poly}(\delta/h),~~r=r_{\ref{lem:sample_with_sig}}(h,\delta)=\mathrm{poly}(h/\delta),~~N=N_{\ref{lem:sample_with_sig}}(h,k,\delta)=\mathrm{poly}(hk/\delta)\;,
$$
so that if $G$ is a graph on at least $N$ vertices,
and $A$ is a $\g$-FK-regular partition of $G$ with at least $r$ and up to $k$ parts, then for every $\g$-signature $S$ of $A$, we have $|ind(H,G)-ind(H,S)| \le \delta$ for every $H$ on $h$ vertices.
\end{lemma}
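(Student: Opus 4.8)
The plan is to chain together the three estimates already established in this subsection, tracking the error terms. Recall the goal: starting from a $\g$-FK-regular partition $A$ of $G$ with between $r$ and $k$ parts, and any $\g$-signature $S$ of $A$, we must bound $|ind(H,G)-ind(H,S)|$. I would factor this through the intermediate weighted graphs $G_A$ (the blow-up weighted graph on $V(G)$ with weights $d(V_i,V_j)$) and $S_A$ (the exact $0$-signature of $A$ on $k$ vertices), writing
$$
|ind(H,G)-ind(H,S)|\le |ind(H,G)-ind(H,G_A)|+|ind(H,G_A)-ind(H,S_A)|+|ind(H,S_A)-ind(H,S)|\;.
$$
I will bound each of the three terms separately and then choose $\g,r,N$ to make the total at most $\delta$.

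For the first term, the key observation is that $\g$-FK-regularity of $A$ says precisely $d_{\square}^{A}(G)<\g$, and this $\square$-type quantity is exactly $d_{\square}(G,G_A)$ in the sense of \eqref{eqsquaredis} (with $R=G$, viewed as a $0/1$-weighted graph, and $R'=G_A$): the maximum over $S,T\subseteq V$ of the normalized discrepancy is the same as the maximum over $\alpha,\beta:[n]\to\{0,1\}$, which dominates but is also dominated (up to the standard factor absorbed into the poly) by the maximum over $[0,1]$-valued functions. Hence Lemma \ref{prop:small_var_under_regularity} applies with this $\g$ (and $n\ge 2/\g$, which holds once $N$ is large enough), giving $|ind(H,G)-ind(H,G_A)|\le 2h^2\g$. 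For the second term I apply Lemma \ref{lem:blow_up_graph} directly: $|ind(H,G_A)-ind(H,S_A)|\le \frac{2h^2}{k'}+\frac{2k'h}{n}$, where $k'$ is the number of parts of $A$; since $r\le k'\le k$, this is at most $\frac{2h^2}{r}+\frac{2kh}{n}$. For the third term, $S$ is a $\g$-signature of $A$, so $S$ and $S_A$ are weighted graphs on $k'$ vertices that differ by at most $\g$ on all but a $\g$-fraction of the pairs, hence $d_{\square}(S_A,S)\le d_1(S_A,S)\le \g+\g=2\g$ (the "bad" pairs contribute at most the fraction $\g$, each pair contributing at most $1$). Applying Lemma \ref{prop:small_var_under_regularity} once more (again using $k'\ge r\ge 2/(2\g)$) yields $|ind(H,S_A)-ind(H,S)|\le 2h^2\cdot 2\g=4h^2\g$.

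Combining, the total error is at most $2h^2\g+\frac{2h^2}{r}+\frac{2kh}{n}+4h^2\g = O(h^2\g)+\frac{2h^2}{r}+\frac{2kh}{n}$. Now I choose the parameters to absorb each piece: set $\g=\g_{\ref{lem:sample_with_sig}}(h,\delta)=c\,\delta/h^2=\mathrm{poly}(\delta/h)$ for a small absolute constant $c$ so that the FK-regularity terms contribute at most $\delta/3$; set $r=r_{\ref{lem:sample_with_sig}}(h,\delta)=\lceil 6h^2/\delta\rceil=\mathrm{poly}(h/\delta)$ so that $\frac{2h^2}{r}\le\delta/3$; and set $N=N_{\ref{lem:sample_with_sig}}(h,k,\delta)=\lceil 6kh/\delta\rceil=\mathrm{poly}(hk/\delta)$ (also at least $2/\g$) so that $\frac{2kh}{n}\le\delta/3$. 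Then $|ind(H,G)-ind(H,S)|\le\delta$ for every $H$ on $h$ vertices, as required.

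The only genuinely delicate point is the first step: matching the FK-regularity parameter $d_{\square}^{A}(G)$ in Definition \ref{def:fk_regularity} with the $d_{\square}$-distance of \eqref{eqsquaredis} so that Lemma \ref{prop:small_var_under_regularity} can be invoked. The definitions are written slightly differently — one sums over $i,j\in[k]^2$ with $d(S_i,T_j)-d_{ij}$ weighted by $|S_i||T_j|$, the other is an $\alpha,\beta$-weighted sum over pairs of vertices — but unwinding both in terms of the weighted graphs $G$ and $G_A$ shows they agree (indicator functions of $S,T$ recovering the $|S_i||T_j|$ weights), so this is a matter of careful bookkeeping rather than a real obstacle. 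Everything else is a direct application of the preceding lemmas.
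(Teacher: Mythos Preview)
Your proposal is correct and follows essentially the same approach as the paper: the same three-term triangle inequality through $G_A$ and $S_A$, invoking Lemma~\ref{prop:small_var_under_regularity} for the first and third terms and Lemma~\ref{lem:blow_up_graph} for the second, with the key identification $d_{\square}^{A}(G)=d_{\square}(G,G_A)$ that you flag as the only delicate point spelled out explicitly in the paper via a chain of equalities. The parameter choices differ only in inessential constants.
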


\begin{proof}
We set $\gamma=\frac{\delta}{8h^{2}},r\ge 2/\gamma$ and $ N= \max\{2/\g,\frac{2k^2}{h},\frac{16kh}{\delta}\}$.
Let $G_A$ and $S_A$ be as defined before Lemma \ref{lem:blow_up_graph}.
If we view $G$ as a weighted $0/1$ graph (so $G(x,y)=1$ if and only if $(x,y) \in E(G)$) then we have
\begin{align}
d_{\square}(G,G_A)&=\max_{\alpha,\beta}\frac{1}{n^2}\left|\sum_{x,y \in V(G)}\alpha(x)\beta(y)(G(x,y)-G_A(x,y))\right|\nonumber\\
&=\max\limits_{S,T \subseteq V} \frac{1}{n^{2}}\biggl\lvert\sum_{i,j\in[k]^{2}}\sum_{x\in S_i,y\in T_j}(G(x,y)-d_{ij})\biggl\lvert\nonumber\\
&=\max\limits_{S,T \subseteq V} \frac{1}{n^{2}}\biggl\lvert\sum_{i,j\in[k]^{2}}e(S_i,T_j)-d_{ij}|S_i||T_j|\biggl\lvert\nonumber\\
&=\max\limits_{S,T \subseteq V} \frac{1}{n^{2}}\biggl\lvert\sum_{i,j\in[k]^{2}}(d(S_i,T_j)-d_{ij})|S_i||T_j|\biggl\lvert\nonumber\\
&=d_{\square}^{A}(G)\leq \g\nonumber
\end{align}
where in the second equality we used the fact that the maximum is always achieved by Boolean\footnote{Indeed,
assume without loss of generality that the maximum is positive. Then we can round to $1$ every $\alpha(x)$ if increasing it increases the outcome.
We can round to $0$ all the rest. We can then do the same rounding process with respect to $\beta$.} valued $\alpha,\beta$.
We may thus infer from Lemma \ref{prop:small_var_under_regularity} (applied with the above defined $\g$; note that $N\ge 2/\g$) that
$|ind(H,G)-ind(H,G_A)|\le 2h^{2} \cdot\g$ for every $H$.
By Lemma \ref{lem:blow_up_graph}, for every $H$ on $h$ vertices, we have $|ind(H,G_A)-ind(H,S_A)|\le \frac{2h^2}{k} + \frac{2kh}{n}$. By our choice of $r,\g$ and $N$, we have that $|ind(H,G_A)-ind(H,S_A)|\le \delta/4$.
Hence by the triangle inequality, and our choice of $\gamma$, we have
$|ind(H,G)-ind(H,S_A)| < \delta/2$. Finally, since $S$ is a $\gamma$-signature of $A$ we have
$d_{\square}(S,S_A) \leq d_1(S,S_A) \leq \gamma$ so by another application of Lemma \ref{prop:small_var_under_regularity} (again with $\g$) we also
have $|ind(H,S)-ind(H,S_A)|\le\delta/2$. Hence, by another application of the triangle inequality we deduce that $|ind(H,G)-ind(H,S)| \le \delta$
thus completing the proof.
\end{proof}

\begin{definition}[Extension]\label{def:extension}
Given a signature $S=(\eta_{ij})_{1\le i<j\le t}$ of an equipartition $A$, and a refinement $B=\{W_{1},\dots,W_{s}\}$ of $A$, the extension of $S$ to $B$ is the sequence $S^{\prime}=(\eta^{\prime}_{ij})_{1\le i<j\le s}$ defined as $\eta_{i,j}^{\prime}=\eta_{k,l}$ if there exist $k\not=l$ such that $W_{i}\subseteq V_{k}$ and $W_{j}\subseteq V_{l}$, and setting $\eta_{i,j}^{\prime}=0$ if $W_{i}$ and $W_{j}$ are both subsets of the same $V_k$.
\end{definition}

\begin{claim}\label{prop:dist_sig}
For every $\eps$ and $s$ there exists $r=r_{\ref{prop:dist_sig}}(\eps)=\mathrm{poly}(1/\eps)$ and $N=N_{\ref{prop:dist_sig}}(\eps,s)=\mathrm{poly}(s/\eps)$ so that the following holds for every pair of graphs $G,G^{\prime}$ on the same set of $n\ge N$ vertices. If $G,G'$
are $\alpha$-close and $S,S^{\prime}$ are $\g,\g^{\prime}$-signatures of $G,G'$ respectively, of the same equipartition $A$ of the vertex set of $G,G^{\prime}$ into $s\ge r$ sets, then $d_1(S,S^{\prime}) \leq \alpha+\eps+2(\g+\g^{\prime})$.
\end{claim}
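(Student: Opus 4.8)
The plan is to relate the $d_1$-distance between the signatures $S,S'$ to the $d_1$-distance between the ``true'' signatures of $G$ and $G'$ with respect to $A$, and then bound the latter by the edit distance between $G$ and $G'$. First I would introduce for each of $G,G'$ the exact density signature: let $S_A=(d_G(V_i,V_j))_{i<j}$ and $S'_A=(d_{G'}(V_i,V_j))_{i<j}$ be the weighted graphs on $[s]$ recording the actual densities between the parts of $A$ in $G$ and in $G'$ respectively. Since $S$ is a $\g$-signature of $A$ (in $G$) and $S'$ is a $\g'$-signature of $A$ (in $G'$), Definition \ref{def:sig} tells us that $|\eta_{i,j}-d_G(V_i,V_j)|\le\g$ for all but at most $\g\binom{s}{2}$ pairs, and similarly for $S'$ with $\g'$. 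Bounding the deviation by $1$ on the exceptional pairs and by $\g$ (resp.\ $\g'$) on the rest, and dividing by $n^2$, gives
\begin{equation}\label{eqsigclose}
d_1(S,S_A)\le \frac{1}{n^2}\left(\g\binom{s}{2}+\g\binom{s}{2}\right)\le \g\left(\frac{s^2}{n^2}+\frac{s^2}{n^2}\right)\le 2\g\cdot\frac{s^2}{n^2}\le \g\,,
\end{equation}
once $N=N_{\ref{prop:dist_sig}}(\eps,s)$ is chosen large enough that $s^2/n^2\le 1/2$ (say $N\ge\sqrt{2}\,s$); and likewise $d_1(S',S'_A)\le\g'$. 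So by the triangle inequality for $d_1$ it suffices to show $d_1(S_A,S'_A)\le\alpha+\eps$.

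For this last step, observe that $G$ and $G'$ are $\alpha$-close means they differ in at most $\alpha n^2$ pairs. Fix a pair of parts $V_i,V_j$ with $i<j$; writing $D_{ij}$ for the number of pairs $(x,y)$ with $x\in V_i,y\in V_j$ on which $G$ and $G'$ disagree, we have $|e_G(V_i,V_j)-e_{G'}(V_i,V_j)|\le D_{ij}$, hence $|d_G(V_i,V_j)-d_{G'}(V_i,V_j)|\le D_{ij}/(|V_i||V_j|)$. Since $A$ is an equipartition into $s$ parts, $|V_i|,|V_j|\ge \lfloor n/s\rfloor\ge n/s - 1\ge n/(2s)$ for $n\ge N$ with $N$ large enough (say $N\ge 2s$). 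Therefore
\begin{equation}\label{eqdensaggr}
d_1(S_A,S'_A)=\frac{1}{n^2}\sum_{i<j}|d_G(V_i,V_j)-d_{G'}(V_i,V_j)|\le \frac{1}{n^2}\sum_{i<j}\frac{D_{ij}}{(n/(2s))^2}=\frac{4s^2}{n^4}\sum_{i<j}D_{ij}\,.
\end{equation}
The naive bound $\sum_{i<j}D_{ij}\le \alpha n^2$ only yields $d_1(S_A,S'_A)\le 4\alpha s^2/n^2$, which is too weak; the point is that $4s^2/n^2$ is much smaller than the error budget we actually want. Concretely, choosing $N=N_{\ref{prop:dist_sig}}(\eps,s)$ so that $4s^2/n^2\le 1$ for all $n\ge N$ is not quite enough since we would only get $d_1(S_A,S'_A)\le\alpha$ times a constant; instead I take $N$ large enough that $4s^2/n^2\le 1$ forces the prefactor to be at most $1$, giving $d_1(S_A,S'_A)\le\alpha$ directly, which is stronger than the claimed $\alpha+\eps$. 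Combining with the two bounds $d_1(S,S_A)\le\g$ and $d_1(S',S'_A)\le\g'$ from \eqref{eqsigclose} via the triangle inequality,
\begin{equation}\label{eqfinalclaim}
d_1(S,S')\le d_1(S,S_A)+d_1(S_A,S'_A)+d_1(S'_A,S')\le \g+\alpha+\g'\le \alpha+\eps+2(\g+\g')\,,
\end{equation}
as required; the $\eps$ term and the factor $2$ are slack we do not even need, which is consistent with the lemma being stated generously.

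The only genuinely delicate point is the quantitative dependence of $N$ on $s$ and $\eps$: I must make sure that the $s^2/n^2$-type factors appearing when passing between per-pair errors and the normalized $d_1$-distance are all absorbed, which is exactly why $N$ is allowed to be $\mathrm{poly}(s/\eps)$ rather than an absolute constant, and why $r=\mathrm{poly}(1/\eps)$ (a mild lower bound on $s$, which here only needs $s\ge 1$ for the argument but is stated for compatibility with how the claim is invoked). Everything else is a routine application of the triangle inequality for $d_1$ together with Definition \ref{def:sig}; no regularity or the machinery of $ind(\cdot,\cdot)$ is needed for this particular claim.
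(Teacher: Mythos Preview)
Your overall architecture is exactly the paper's: introduce the exact-density signatures $S_A,S'_A$, bound $d_1(S,S_A)$ and $d_1(S',S'_A)$ from the definition of a signature, bound $d_1(S_A,S'_A)$ from the $\alpha$-closeness of $G,G'$, and apply the triangle inequality. However, there is a genuine error in the computations that makes the argument, as written, invalid.

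The problem is the normalization of $d_1$. By the paper's definition (equation~\eqref{eq1dis}) and the sentence following it, a signature on $s$ parts is a weighted graph on $s$ vertices, so
\[
d_1(S,S')=\frac{1}{s^2}\sum_{i<j}|\eta_{i,j}-\eta'_{i,j}|\,,
\]
not $\frac{1}{n^2}\sum_{i<j}|\eta_{i,j}-\eta'_{i,j}|$. You divided by $n^2$ in both displays. For $d_1(S,S_A)$ this happens to be harmless: with the correct $1/s^2$ you get $d_1(S,S_A)\le \frac{1}{s^2}\bigl(\g\binom{s}{2}+\g\binom{s}{2}\bigr)\le 2\g$, which is what the paper states. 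But for $d_1(S_A,S'_A)$ the error is fatal. With the right normalization and your crude bound $|V_i|\ge n/(2s)$ you obtain
\[
d_1(S_A,S'_A)=\frac{1}{s^2}\sum_{i<j}\frac{D_{ij}}{|V_i||V_j|}\le\frac{1}{s^2}\cdot\frac{4s^2}{n^2}\sum_{i<j}D_{ij}\le 4\alpha\,,
\]
which is \emph{not} $\le\alpha+\eps$. (Your muddled paragraph calling the bound ``too weak'' and then immediately claiming it gives $\le\alpha$ is a symptom of this: the tiny number $4\alpha s^2/n^2$ you computed is an artefact of the wrong normalization.)

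The fix is to use the sharper $|V_i||V_j|\ge(\lfloor n/s\rfloor)^2\ge\frac{n^2}{s^2}(1-s/n)^2$, which yields $d_1(S_A,S'_A)\le \alpha/(1-s/n)^2\le\alpha+\eps$ once $n\ge N$ with $N=\mathrm{poly}(s/\eps)$ (e.g.\ $N\ge Cs/\eps$ suffices). This is precisely the step the paper hides behind ``assuming $n$ is large enough we clearly have $d_1(S_A,S'_A)\le\alpha+\eps$'', and it is the actual reason $N$ must depend on $\eps$ and not merely on $s$; your choice $N\ge 2s$ cannot work. With this correction your proof coincides with the paper's.
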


\begin{proof}
We set $r=2/\eps$. Let $S_A,S'_A$ be the $0$-signatures of $A$ with respect to $G,G'$.
Then assuming $n>N_{\ref{prop:dist_sig}}(\eps,s)=\mathrm{poly}(s/\eps)$ is large enough we clearly have
$d_1(S_A,S'_A) \leq \alpha+\eps$.
Since $S$ is a $\gamma$-signature of $A$ we have (by definition) $d_1(S,S_A) \leq 2\g$, and by the same reasoning
we have $d_1(S',S'_A) \leq 2\g'$. Hence, by the triangle inequality we have $d_1(S,S^{\prime}) \leq \alpha+\eps+2(\g+\g^{\prime})$.
\end{proof}

\begin{lemma}\label{lem:base_lemma}
For every $\eps$ and $t$ there exists $\g=\g_{\ref{lem:base_lemma}}(\eps)=\mathrm{poly}(\eps)$ and $N=N_{\ref{lem:base_lemma}}(t,\eps)=\mathrm{poly}(t/\eps)$, so that for every graph $G$ on $n\ge N$ vertices, if $S$ is a $\g$-signature of a $\g$-FK-regular$^\star$ partition $A$ of $G$ with $t$ sets, then for every signature $S^{\prime}$ satisfying $d_1(S,S') \leq \delta$ for some $\delta$, there is a graph $G^{\prime}$ that is $(\delta+\eps)$-close to $G$, so that $A$ is an $\eps$-FK-regular partition of $G^{\prime}$, and $S^{\prime}$ is an $\eps$-signature of $A$.
\end{lemma}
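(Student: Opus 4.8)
The plan is to construct $G'$ from $G$ by a randomized-rounding argument applied independently to each pair of clusters of $A$. Fix the equipartition $A = \{V_1,\dots,V_t\}$, write $S = (\eta_{i,j})$ and $S' = (\eta'_{i,j})$, and recall that $|d(V_i,V_j) - \eta_{i,j}| \le \gamma$ for all but at most $\gamma\binom{t}{2}$ pairs. For each pair $i<j$ we want the density between $V_i$ and $V_j$ in the new graph $G'$ to be (close to) $\eta'_{i,j}$. The natural move is: for each pair $(i,j)$, independently resample every potential edge between $V_i$ and $V_j$, putting an edge in with probability $\eta'_{i,j}$; equivalently, keep an independent random subset of the right size so that $d_{G'}(V_i,V_j)$ concentrates around $\eta'_{i,j}$ up to an error of $O(\sqrt{\log t}/\sqrt{|V_i||V_j|})$, which is $o(\eps)$ once $n \ge N_{\ref{lem:base_lemma}}(t,\eps) = \mathrm{poly}(t/\eps)$ with a suitable polynomial and we apply Lemma \ref{lem:chern} and a union bound over the $\binom{t}{2}$ pairs. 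Doing the rounding by hand to an exact count (rather than truly resampling) is cleaner since it makes $S'$ an $\eps$-signature of $A$ in $G'$ deterministically after the fact; but the probabilistic framing is what lets us control the $d_\square$ error below.

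The three things to verify are: (a) $G'$ is $(\delta+\eps)$-close to $G$; (b) $A$ is $\eps$-FK-regular in $G'$; (c) $S'$ is an $\eps$-signature of $A$ in $G'$. For (c): by construction $|d_{G'}(V_i,V_j) - \eta'_{i,j}| \le \gamma' \le \eps$ for \emph{every} pair (or all but a $\gamma$-fraction, if we only fix the "good" pairs), so this is immediate. For (a): the number of edges changed between $V_i$ and $V_j$ is, up to the concentration error, $|d_G(V_i,V_j) - \eta'_{i,j}|\cdot |V_i||V_j|$. Since $A$ is only a \emph{signature} of $S$, we have $|d_G(V_i,V_j) - \eta_{i,j}| \le \gamma$ for all but $\gamma\binom{t}{2}$ pairs, and $\sum_{i<j}|\eta_{i,j} - \eta'_{i,j}| = n^2 d_1(S,S') \le \delta n^2$; combining via the triangle inequality, the total number of changed edges is at most $\delta n^2 + (\gamma + o(1))\cdot\binom{t}{2}\cdot (n/t)^2 + (\text{exceptional pairs contribute} \le \gamma\binom{t}{2}(n/t)^2)$, which is $\delta n^2 + O(\gamma)n^2 + o(n^2) \le (\delta+\eps)n^2$ provided $\gamma = \gamma_{\ref{lem:base_lemma}}(\eps) = \mathrm{poly}(\eps)$ is chosen small enough (say $\gamma \le \eps/100$) and $n \ge N$.

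The main obstacle — and the reason the hypothesis is FK-regular\emph{-star} rather than plain FK-regular — is part (b). We need $d_\square^{A}(G') < \eps$, i.e. for all $S,T \subseteq V(G')$, $\frac{1}{n^2}\big|\sum_{i,j} (d_{G'}(S_i,T_j) - d_{G'}(V_i,V_j))|S_i||T_j|\big| < \eps$. Write this as $\frac{1}{n^2}\big|\sum_{i,j}(d_{G'}(S_i,T_j) - \eta'_{i,j})|S_i||T_j|\big|$ up to an $O(\gamma t^2 (n/t)^2)/n^2 = O(\gamma)$ slack (since $|d_{G'}(V_i,V_j) - \eta'_{i,j}| \le \gamma'$). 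Inside a single block $(i,j)$, the resampled bipartite graph between $V_i$ and $V_j$ is, with high probability, such that $d(S_i,T_j)$ is close to $\eta'_{i,j}$ for all $S_i \subseteq V_i$, $T_j \subseteq V_j$ — but only when $|S_i|,|T_j|$ are reasonably large; small sets $S_i,T_j$ can deviate, and there are exponentially many of them, so a naive union bound fails. The standard fix is to split each $S_i$ into its "large" part and absorb the contribution of all $i$ with $|S_i| \le \eps n / t^2$ into the $\eps$-error directly (their total weight is $\le \eps n / t$, contributing $\le \eps n \cdot n / n^2 = \eps$), and for the large blocks use Lemma \ref{lem:chern} on $O(t^2)$ choices of density thresholds plus a counting/net argument over $S_i, T_j$; here is exactly where FK-regular-star of $A$ in $G$ is used, because $d_\square^{\star A}(G) < \gamma$ lets us pass the "per-block $\ell_1$" control from $G$ to $G'$ block by block rather than only the aggregated $\ell_\infty$-type bound that plain FK-regularity gives. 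I would organize (b) as: bound $|d_\square^A(G') - d_\square^{\star A}(G')|$ trivially, bound $d_\square^{\star A}(G')$ by $d_\square^{\star A}(G) + \sum_{i<j}|d_G(V_i,V_j) - d_{G'}(V_i,V_j)|\cdot |V_i||V_j|/n^2 + (\text{resampling error})$, and observe the middle term is at most $d_1(S,S') + 2\gamma + o(1) \le \delta + 2\gamma + o(1)$ — wait, that is too big — so in fact one must be more careful and bound $d_\square^{\star A}(G')$ \emph{directly} in terms of $d_\square^{\star A}(G) < \gamma$ and the per-block resampling fluctuations, never passing through $d_1(S,S')$, which is the real content. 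This is the step I expect to occupy most of the proof in Section \ref{sec:proof_lemma_2}.
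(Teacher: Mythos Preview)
Your overall architecture is right, but there is a genuine gap in the construction, and it propagates into both (a) and (b).

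You say ``resample every potential edge between $V_i$ and $V_j$, putting an edge in with probability $\eta'_{i,j}$''. With that construction, the expected number of edge changes between $V_i$ and $V_j$ is $\big(d_{ij}(1-\eta'_{ij}) + (1-d_{ij})\eta'_{ij}\big)|V_i||V_j|$, not $|d_{ij}-\eta'_{ij}|\,|V_i||V_j|$. In particular, if $d_{ij}=\eta'_{ij}=1/2$ you change half the pairs even though the target density already matches. So part (a) fails outright for full resampling, and your sentence ``the number of edges changed between $V_i$ and $V_j$ is, up to the concentration error, $|d_G(V_i,V_j)-\eta'_{i,j}|\cdot|V_i||V_j|$'' is false for the construction you wrote down.

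The paper's fix is a \emph{monotone coupling}: if $\eta'_{ij}<d_{ij}$, delete each existing edge independently with probability $1-\eta'_{ij}/d_{ij}$ (and add nothing); if $\eta'_{ij}>d_{ij}$, add each non-edge with probability $1-(1-\eta'_{ij})/(1-d_{ij})$ (and delete nothing). Now the change count really is $|d_{ij}-\eta'_{ij}|\,|V_i||V_j|$, and your argument for (a) goes through. But this coupling is also exactly what makes (b) work, and this is the step you did not find: for any $X\subseteq V_i$, $Y\subseteq V_j$, the monotone coupling gives the pointwise contraction
\[
\big|\mathbb{E}[d'(X,Y)]-\eta'_{ij}\big| \;\le\; \big|d(X,Y)-d_{ij}\big|.
\]
(Check: in the deletion case $\mathbb{E}[d'(X,Y)]=q_{ij}d(X,Y)$ with $q_{ij}=\eta'_{ij}/d_{ij}\le 1$, so both sides equal $q_{ij}|d(X,Y)-d_{ij}|$ and $|d(X,Y)-d_{ij}|$ respectively.) Summing this inequality over all pairs $(i,j)$ with $|S_i|,|T_j|$ not too small, and adding the Chernoff fluctuation $|d'(S_i,T_j)-\mathbb{E}[d'(S_i,T_j)]|\le\gamma/4$ on those large blocks, yields
\[
\sum_{i,j}\big|d'(S_i,T_j)-d'_{ij}\big|\,|S_i||T_j| \;\le\; O(\gamma)n^2 + \sum_{i,j}\big|d(S_i,T_j)-d_{ij}\big|\,|S_i||T_j| \;\le\; O(\gamma)n^2 + d_\square^{\star A}(G)\,n^2,
\]
and the right-hand side is $<\eps n^2$. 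This is the ``pass the per-block $\ell_1$ control from $G$ to $G'$'' step you allude to but never pin down; the contraction inequality is its content, and it is false for full resampling (where $\mathbb{E}[d'(X,Y)]=\eta'_{ij}$ regardless of $d(X,Y)$, so there is no link back to $d_\square^{\star A}(G)$ at all --- which is also why your attempted route through $d_1(S,S')$ blew up to $\delta$). Once you use the monotone coupling and this inequality, no nets or density-threshold union bounds are needed; the only Chernoff union bound is over all large $X\subseteq V_i$, $Y\subseteq V_j$, exactly as you outlined.
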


\begin{proof}
The idea is very simple; we randomly modify $G$ so that the densities will be those of $S'$.
However, showing that $A$ will be an $\eps$-FK-regular partition of $G'$ will require a subtle argument that will employ
the fact that $A$ is an $\g$-FK-regular$^\star$ partition of $G$.

We set $\g=\g_{\ref{lem:base_lemma}}(\eps)=\eps/6$ and $N=N_{\ref{lem:base_lemma}}(t,\eps)=960t^{3}/\g^{5}$.
Given $G$, $A=\{V_{1},\dots V_{t}\}$, $S=(\eta_{ij})_{1\le i<j\le t}$ and $S^{\prime}=(\eta^{\prime}_{ij})_{1\le i<j\le t}$ as in the statement of the lemma, we obtain $G^{\prime}$ from $G$ using the following process, in which all random choices are done independently:
\begin{itemize}
    \item For every $i$, the edges within $V_{i}$ are unchanged.
    \item For $i<j$ such that $\eta_{i,j}^{\prime}<d(V_{i},V_{j})$, every edge of $G$ between $V_{i}$ and $V_{j}$ is removed with probability $1-\frac{\eta_{i,j}^{\prime}}{d(V_{i},V_{j})}$.
    \item  For $i<j$ such that $\eta_{i,j}^{\prime}>d(V_{i},V_{j})$, every vertex pair of $G$ between $V_{i}$ and $V_{j}$ that is not an edge, becomes an edge with probability $1-\frac{1-\eta_{i,j}^{\prime}}{1-d(V_{i},V_{j})}$.
\end{itemize}

In what follows we use $e'(X,Y)$ to denote the number of edges in $G'$ between $X$ and $Y$ and $d'(X,Y)$ to denote the densities between
these sets in $G'$. Note that the way we generate $G'$ guarantees that for every $i<j$ we have
\begin{equation}\label{eqexpectation}
\mathbb{E}[d'(V_i,V_j)]=\eta'_{i,j}\;.
\end{equation}

We first prove that with probability at least $1/2$, for every $i<j$ and every $X\subseteq V_i,Y\subseteq V_j$ satisfying $|X| \ge 2\g|V_i|,|Y|\ge2\g|V_j|$, we have
\begin{equation}\label{inequ1}
|e'(X,Y)-\mathbb{E}[e'(X,Y)]|\le \frac{\g}{4}|X||Y|.
\end{equation}
Note also that it is equivalent to
\begin{equation}\label{inequ2}
|d'(X,Y)-\mathbb{E}[d'(X,Y)]|\le \g/4\;.
\end{equation}
It suffices to show that for fixed $i < j$ and subsets $X \subseteq V_i,Y\subseteq V_j$ as above we have
$$
\Pr\left[|e'(X,Y)-\mathbb{E}[e'(X,Y)]|\ge \frac{\g}{4}|X||Y|\right]<\frac{2^{-3n/t}}{4t^2}< \frac{2^{-2(n/t+1)}}{4t^2} \;,
$$
since we could then conclude by taking a union bound over all $i <j$ and choices of $X,Y$.
So from this point we fix $i,j,X,Y$. We first treat the case $d(V_i,V_j)>\eta'_{ij}$.
If $d(X,Y)<\g/4$ then both $e'(X,Y),\mathbb{E}[e'(X,Y)] \leq \frac{\gamma}{4}|X||Y|$
so (\ref{inequ1}) holds with probability 1.
Assume now that $d(X,Y)\ge\g/4$.
By Lemma \ref{lem:chern}, with $m=d(X,Y)|X||Y|\ge \g^3 (n/t-1)^2 \ge \g^3 \frac{n^2}{4t^2}$ and $\theta=\frac{\g}{4d(X,Y)}$ we get
$$
\Pr[|e'(X,Y)-\mathbb{E}[e'(X,Y)]|> \frac{\g}{4}|X||Y|]<2e^{-2\theta^2m}<2e^{-2(\frac{\g}{4d(X,Y)})^2\cdot \frac{\g^3 n^2}{4t^2}}
\le 2e^{-\frac{1}{32}(\frac{\g^5 n^2}{t^2})}\le  \frac{2^{-3n/t}}{4t^2}\;,
$$
where in the last inequality we used our assumption on $n$.
We now assume that $d(V_i,V_j)<\eta'_{ij}$.
If $d(X,Y)>1-\g/4$ then again (\ref{inequ1}) holds with probability 1.
Assume now that $d(X,Y)\le 1-\g/4$.
As above, by Lemma \ref{lem:chern}, with $m=(1-d(X,Y))|X||Y|\ge \g^3 (n/t-1)^2 \ge \g^3 \frac{n^2}{4t^2}$ and $\theta=\frac{\g}{4(1-d(X,Y))}$ we get
$$
\Pr[|e'(X,Y)-\mathbb{E}[e'(X,Y)]|> \frac{\g}{4}|X||Y|]<2e^{-2\theta^2m}<2e^{-2(\frac{\g}{4(1-d(X,Y))})^2\cdot \frac{\g^3 n^2}{4t^2}}
\le 2e^{-\frac{1}{32}(\frac{\g^5 n^2}{t^2})}\le   \frac{2^{-3n/t}}{4t^2}
$$
Since the statement clearly holds for the case of $d(V_i,V_j)=\eta'_{ij}$ (in this case we do nothing),
we have thus proved that (\ref{inequ2}) holds with probability at least $1/2$ for all $i,j$ and every $X\subseteq V_i,Y\subseteq V_j$ satisfying $|X| \ge 2\g|V_i|,|Y|\ge2\g|V_j|$. We will now prove that this fact implies all the assertions of the lemma.

We first observe that using $X=V_i$ and $Y=V_j$ in (\ref{inequ2}) we see that for every $i<j$ we have
\begin{equation}\label{eqdensities3}
|d'(V_i,V_j)-\mathbb{E}[d'(V_i,V_j)]|=|d'(V_i,V_j)-\eta'_{i,j}| \leq \g/4\;,
\end{equation}
where the first equality is (\ref{eqexpectation}).

We now claim that for every $i<j$ and $X\subseteq V_i,Y\subseteq V_j$, we have
\begin{equation}\label{ineq2}
    |\mathbb{E}[d'(X,Y)]-\eta'_{i,j}|\le |d(X,Y)-d_{i,j}|
\end{equation}
where we use $d_{i,j}=d(V_i,V_j)$.
This clearly holds if $d_{i,j}=\eta'_{i,j}$, so assume first
that $d_{i,j}>\eta'_{i,j}$. Setting $q_{ij}=\eta'_{i,j}/d_{i,j}$, we see that every edge between $V_i$ and $V_j$ is kept in $G'$ with probability $q_{i,j}$, hence
\begin{align*}
|\mathbb{E}[d'(X,Y)]-\eta'_{i,j}|=|q_{ij}\cdot d(X,Y)-\eta'_{i,j}|=q_{ij}|d(X,Y)-d_{i,j}|\leq |d(X,Y)-d_{i,j}|\;.
\end{align*}

Similarly, if $d_{i,j}<\eta'_{i,j}$, then setting $q'_{ij}=1-p'_{ij}=(1-\eta'_{i,j})/(1-d_{i,j})$
we see that every edge missing between
$V_i$ and $V_j$ is added to $G'$ with probability $p'_{i,j}$, hence

\begin{align}
    |\mathbb{E}[d'(X,Y)]-\eta'_{ij}|&=|d(X,Y)+p'_{ij}(1-d(X,Y))-\eta'_{ij}|\nonumber\\
    &=|q'_{ij}\cdot d(X,Y)+p'_{ij}-\eta'_{i,j}|\nonumber\\
    &=|q'_{ij}\cdot d(X,Y)-q'_{ij}+(1-\eta'_{i,j})|\nonumber\\
    &=q'_{ij}|d(X,Y)-d_{i,j}| \le |d(X,Y)-d_{i,j}| \; .\nonumber
\end{align}

Fix now a pair of sets $S,T$ and let $L$ be the set of pairs $i,j$ for which $|S_i| \geq 2\gamma |V_i|$ and $|T_j| \geq 2\gamma |V_j|$. Then
\begin{align}
    \biggl|\sum_{i,j\in[k]^2}(d'(S_i,T_j)-d'_{ij})\cdot |S_i||T_j|\biggr|&\le 4\g n^2+ \biggl|\sum_{i,j\in L}(d'(S_i,T_j)-d'_{ij})\cdot |S_i||T_j|\biggr|\nonumber\\
    &\le 5\g n^2+\biggl|\sum_{i,j\in L}(\mathbb{E}[d'(S_i,T_j)]-\eta'_{i,j})\cdot |S_i||T_j|\biggr|\nonumber\\
    &\le 5\g n^2+ \sum_{i,j\in L}\biggl |(\mathbb{E}[d'(S_i,T_j)]-\eta'_{i,j})\biggr |\cdot |S_i||T_j|\nonumber\\
    &\le 5\g n^2+ \sum_{i,j\in L}\biggl |d(S_i,T_j)-d_{ij})\biggr |\cdot |S_i||T_j|\nonumber\\
    &\le 5\g n^2+ \sum_{i,j\in [k]^2}\biggl |d(S_i,T_j)-d_{ij}\biggr |\cdot |S_i||T_j|\nonumber\\
    &\le 6\g n^2 = \eps n^2\nonumber\;,
\end{align}
where the first inequality holds by the definition of $L$, the second inequality holds due to (\ref{eqdensities3}) and (\ref{inequ2}) (applied to $(S_i,T_j)$), the third inequality is the triangle inequality, the fourth inequality is (\ref{ineq2}) and the sixth inequality is the assumption that $d^{\star A}_{\square}(G)\leq \gamma$.
Since the above holds for every $S,T$ we deduce that $d^{A}_{\square}(G')\leq \eps$ so $A$ is indeed an $\eps$-FK-regular partition of $G'$.

Since $\gamma < \eps$, inequality (\ref{eqdensities3}) implies that $S'$ is an $\eps$-signature of $A$ with respect to $G'$, establishing the third assertion of the lemma.
We also deduce from (\ref{eqdensities3}) that
$$
\frac{1}{k^2}\sum_{i,j\in[k]^{2}}|d^{\prime}(V_{i},V_{j})-\eta'_{i,j}|\leq \gamma \leq \eps/2\;.
$$
By the lemma's assumption we also have
$$
\frac{1}{k^2}\sum_{i,j\in[k]^{2}}|\eta_{i,j}-\eta'_{i,j}|=d(S,S') \leq \delta\;.
$$
By the lemma's assumption, $S$ is a $\gamma$-signature of $A$ with respect to $G$, implying that
$$
\frac{1}{k^2}\sum_{i,j\in[k]^{2}}|d(V_{i},V_{j})-\eta_{i,j}| \leq 2\gamma \leq \eps/2
$$
Finally, since
$$
|d^{\prime}(V_{i},V_{j})-d(V_{i},V_{j})|\le|d^{\prime}(V_{i},V_{j})-\eta'_{i,j}|+|\eta_{i,j}-\eta'_{i,j}|+|\eta_{i,j}-d(V_{i},V_{j})|,
$$
we infer that
$$
\frac{n^2}{k^2}\sum_{i,j\in[k]^{2}} |d^{\prime}(V_{i},V_{j})-d(V_{i},V_{j})| \leq (\delta + \eps)n^2\;.
$$
Since the left hand side above is the precise number of edge modifications we made when changing $G$ to $G'$, we deduce that
$G'$ is $(\delta+\eps)$-close to $G$, establishing the first assertion of the lemma.
\end{proof}

We will also need the following lemmas.

\begin{lemma}(\cite{AFKS} Lemma 3.7)\label{lem:add_index}
For every $\eps,t$ there exists $\g=\g_{\ref{lem:add_index}}(\eps)=\mathrm{poly}(\eps)$ and $N=N_{\ref{lem:add_index}}(t,\eps)=\mathrm{poly}(t/\eps)$ satisfying the following.
Assume $A$ is an equipartition into $s$ sets of a graph $G$ with $n\ge N$ vertices, and that $B$ is a refinement of $A$ into at most $t$ sets. Assume further that $S$ is any $\g$-signature of $A$, and that $T$ is its extension to $B$.
If $B$ satisfies $ind(B)\le ind(A)+\g$, then $T$ is an $\eps$-signature for $B$.
\end{lemma}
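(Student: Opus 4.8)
The plan is to run the classical ``defect version of Cauchy--Schwarz'' that underlies all regularity-lemma arguments, and then convert the resulting mean-square bound into a statement about most pairs via Markov's inequality. Write $A=\{V_1,\dots,V_s\}$ and $B=\{W_1,\dots,W_{s'}\}$ with $s'\le t$, and for each $k\le s$ set $\mathcal I_k=\{i:W_i\subseteq V_k\}$ and $p_k=|\mathcal I_k|$. Since $A$ and $B$ are both \emph{equi}partitions, $|V_k|=(1\pm O(t/n))\,n/s$ and $|W_i|=(1\pm O(t/n))\,n/s'$, so in particular every $p_k$ equals $(1\pm O(t/n))\,s'/s$; consequently every weight $\tfrac{|W_i||W_j|}{|V_k||V_l|}$, every product $p_kp_l$, and every normalisation $\tfrac1{s^2},\tfrac1{s'^2}$ entering $ind(\cdot)$ agrees with its ``idealised'' value up to an additive $\mathrm{poly}(t)/n$ error. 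This is the only place the hypothesis $n\ge N=\mathrm{poly}(t/\eps)$ is used: taking $N$ large enough makes all of these errors negligible compared with $\g$. In the sketch below I suppress them and pretend both partitions are exact, so $p_k=s'/s$ for all $k$. The bookkeeping of these $\mathrm{poly}(t)/n$ slack terms is the tedious part of the argument, but it is entirely routine.

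The first substantive step is the exact identity $e(V_k,V_l)=\sum_{i\in\mathcal I_k,\,j\in\mathcal I_l}e(W_i,W_j)$ for $k\ne l$, which says that $d(V_k,V_l)$ is the average of the $(s'/s)^2$ sub-densities $d(W_i,W_j)$, $i\in\mathcal I_k$, $j\in\mathcal I_l$. Combining this with the elementary variance decomposition $\overline{x^2}=\bar x^{\,2}+\overline{(x-\bar x)^2}$ applied block by block, and splitting the pairs of $B$ into \emph{within-part} pairs ($i,j$ in a common $\mathcal I_k$) and \emph{between-part} pairs, one gets
$$
ind(B)\;=\;ind(A)\;+\;\frac1{s'^2}\sum_{k}\ \sum_{\substack{i<j\\ i,j\in\mathcal I_k}} d^2(W_i,W_j)\;+\;\frac1{s'^2}\sum_{k<l}\ \sum_{i\in\mathcal I_k,\,j\in\mathcal I_l}\bigl(d(W_i,W_j)-d(V_k,V_l)\bigr)^2 .
$$
All three added terms are non-negative, so the hypothesis $ind(B)\le ind(A)+\g$ forces \emph{simultaneously}
$$
\sum_{i,j\ \mathrm{within}} d^2(W_i,W_j)\le \g\,s'^2 \qquad\text{and}\qquad \sum_{k<l}\ \sum_{i\in\mathcal I_k,\,j\in\mathcal I_l}\bigl(d(W_i,W_j)-d(V_k,V_l)\bigr)^2\le \g\,s'^2 .
$$
I expect this to be the main conceptual obstacle: one must notice that the extension $T$ assigns the value $0$ to within-part pairs, and that this is harmless \emph{precisely because} the index inequality separately bounds $\sum_{\mathrm{within}} d^2(W_i,W_j)$ — without this observation the within-part pairs of $B$ would be completely uncontrolled and $T$ would fail to be a signature of $B$.

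The last step is a double use of Markov's inequality together with the signature hypothesis on $S$. From the first bound, all but at most $\sqrt\g\,s'^2$ within-part pairs satisfy $d(W_i,W_j)\le\g^{1/4}$, i.e. $|d(W_i,W_j)-T_{i,j}|\le\g^{1/4}$. From the second bound, all but at most $\sqrt\g\,s'^2$ between-part pairs satisfy $|d(W_i,W_j)-d(V_k,V_l)|\le\g^{1/4}$. And since $S$ is a $\g$-signature of $A$, at most $\g\binom s2$ pairs $k<l$ are bad for $S$, each spoiling the $p_kp_l=(s'/s)^2$ pairs of $B$ lying above it, hence at most $\g\,s'^2$ pairs of $B$ altogether. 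Discarding this union of at most $(2\sqrt\g+\g)\,s'^2$ exceptional pairs (which is $\le\eps\binom{s'}2$ once $\g$ is small, using $s'^2\le 4\binom{s'}2$; the case $s'\le 1$ is trivial), for every remaining pair $i<j$ with $W_i\subseteq V_k$, $W_j\subseteq V_l$ we obtain in the between-part case
$$
|d(W_i,W_j)-T_{i,j}|=|d(W_i,W_j)-\eta_{k,l}|\le |d(W_i,W_j)-d(V_k,V_l)|+|d(V_k,V_l)-\eta_{k,l}|\le\g^{1/4}+\g\le 2\g^{1/4},
$$
and $|d(W_i,W_j)-T_{i,j}|=d(W_i,W_j)\le\g^{1/4}$ in the within-part case. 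Choosing $\g=\g_{\ref{lem:add_index}}(\eps):=(\eps/24)^4=\mathrm{poly}(\eps)$ makes $2\g^{1/4}\le\eps$ and $(2\sqrt\g+\g)\,s'^2\le\eps\binom{s'}2$, which is exactly the assertion that $T$ is an $\eps$-signature of $B$; and $N=N_{\ref{lem:add_index}}(t,\eps)=\mathrm{poly}(t/\eps)$ is chosen large enough to absorb the deferred $\mathrm{poly}(t)/n$ terms. This completes the plan.
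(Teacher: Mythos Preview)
Your proposal is correct and follows exactly the standard ``defect Cauchy--Schwarz / variance decomposition'' argument that underlies this lemma. Note that the paper does not actually supply its own proof of this statement --- it is imported verbatim from \cite{AFKS}, Lemma~3.7 --- and the proof there proceeds along the same lines you outline: decompose $ind(B)-ind(A)$ into the non-negative within-block sum and the non-negative between-block variance, use the hypothesis $ind(B)\le ind(A)+\g$ to bound each piece, and then pass from an $\ell_2$ bound to a ``most pairs'' statement by Markov. Your handling of the within-part pairs (where $T$ assigns the value $0$) and of the propagation of the $\g$-signature error of $S$ through the extension are both exactly what is needed, and your observation that the equipartition discrepancies contribute only $\mathrm{poly}(t)/n$ additive errors absorbed by $N=\mathrm{poly}(t/\eps)$ is the right way to dispose of the rounding issues.
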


\begin{lemma}(\cite{FN} Lemma 6.6)\label{lem:close_ind}
For every $\eps,t$ there exists $N=N_{\ref{lem:close_ind}}(t,\eps)=\mathrm{poly}(t/\eps)$ so that for every equipartition $A$ of $G$ with $n\ge N$ vertices into $s$ sets, and every refinement $B$ of $A$ into at most $t$ sets, $ind(B)\ge ind(A)-\eps$.
\end{lemma}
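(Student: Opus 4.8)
The plan is to run the classical argument showing that refining a partition cannot decrease its index (Definition~\ref{def:index_robust}) --- which is just convexity of $x\mapsto x^2$, equivalently the defect form of the Cauchy--Schwarz inequality --- and then to absorb into the additive term $\eps$ the small error coming from the fact that $A$ and $B$ are equipartitions rather than perfectly balanced partitions. Write $A=\{V_1,\dots,V_s\}$ and $B=\{W_1,\dots,W_m\}$; since $B$ refines $A$ we have $s\le m\le t$, and for each $1\le i\le s$ let $W_{i,1},\dots,W_{i,k_i}$ be the parts of $B$ contained in $V_i$. For every $i\ne j$ the edges of $G$ between $V_i$ and $V_j$ split across the sub-parts, so $d(V_i,V_j)=\sum_{a,b}w^{(i,j)}_{a,b}\,d(W_{i,a},W_{j,b})$ with $w^{(i,j)}_{a,b}=\frac{|W_{i,a}|\,|W_{j,b}|}{|V_i|\,|V_j|}\ge 0$ and $\sum_{a,b}w^{(i,j)}_{a,b}=1$; Jensen's inequality for $x\mapsto x^2$ then gives $d^2(V_i,V_j)\le\sum_{a,b}w^{(i,j)}_{a,b}\,d^2(W_{i,a},W_{j,b})$ for all $i\ne j$.

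Next I would bound the weights uniformly. Since $A$ and $B$ are equipartitions, $|V_i|\ge\lfloor n/s\rfloor$ and $|W_c|\le\lceil n/m\rceil$ for all $i,c$, so for $n$ large enough relative to $t$ (recall $s,m\le t$) one obtains, via the elementary estimate $\frac{\lceil n/m\rceil}{\lfloor n/s\rfloor}\le\frac{s}{m}(1+O(t/n))$, that $w^{(i,j)}_{a,b}\le\frac{s^2}{m^2}\bigl(1+O(t/n)\bigr)$ for all $i,j,a,b$. Summing the Jensen bound over all pairs $i<j$, the coefficient of $d^2(W_c,W_d)$ in the resulting right-hand side equals $w^{(i,j)}_{a,b}$ when $W_c\subseteq V_i$, $W_d\subseteq V_j$ lie in distinct parts of $A$, and is $0$ when $W_c,W_d$ lie in a common part of $A$; hence
\[
\sum_{i<j}d^2(V_i,V_j)\ \le\ \bigl(1+O(t/n)\bigr)\,\frac{s^2}{m^2}\sum_{c<d}d^2(W_c,W_d).
\]
Dividing by $s^2$ yields $ind(A)\le\bigl(1+O(t/n)\bigr)\,ind(B)$, and since $ind(B)=\frac{1}{m^2}\sum_{c<d}d^2(W_c,W_d)\le\frac12$, we get $ind(A)\le ind(B)+O(t/n)$. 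Taking $N=N_{\ref{lem:close_ind}}(t,\eps)=\mathrm{poly}(t/\eps)$ large enough drives the error below $\eps$, which is the desired conclusion.

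The one point requiring care is this uniform weight estimate --- i.e.\ checking that the imbalance of the equipartitions costs only a factor $1+O(t/n)$ rather than a constant factor. It is reassuring that for $n$ large the degenerate situation in which some $V_i$ is not subdivided at all ($k_i=1$, so $W_{i,1}=V_i$) cannot arise: it would force $\lfloor n/s\rfloor\le|V_i|=|W_{i,1}|\le\lceil n/m\rceil$, which for $n$ large implies $m=s$ and hence $B$ is merely a relabelling of $A$ with $ind(B)=ind(A)$; in the remaining case every $V_i$ is split into roughly $m/s$ parts of size roughly $n/m$ and the estimate is routine. No quantitative control of the individual $k_i$ beyond the size bounds $\lfloor n/s\rfloor\le|V_i|$ and $|W_c|\le\lceil n/m\rceil$ is needed, and everything else in the proof is the one-line convexity inequality and the counting observation above.
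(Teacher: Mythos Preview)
The paper does not actually prove this lemma; it is stated with a citation to \cite{FN} (Lemma 6.6) and used as a black box. Your argument is the standard one --- convexity of $x\mapsto x^2$ gives $d^2(V_i,V_j)\le\sum_{a,b}w^{(i,j)}_{a,b}\,d^2(W_{i,a},W_{j,b})$, and the equipartition hypothesis gives the uniform bound $w^{(i,j)}_{a,b}\le(s/m)^2(1+O(t/n))$ via $|W_c|\le\lceil n/m\rceil$ and $|V_i|\ge\lfloor n/s\rfloor$ --- and it is correct. The side remark about the case $k_i=1$ is harmless but also unnecessary: your weight bound uses only the size inequalities and holds regardless of how many sub-parts each $V_i$ receives, so you need not single out that case.
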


The next observation is implicit in the proof of the Frieze--Kannan Regularity Lemma (i.e. Lemma \ref{lem:fk_reg_lem}).
The main step of the proof involves showing that if $A$ is an equipartition of $G$ into $t$ parts and $A$ is not $\eps$-FK-regular$^{\star}$, then $A$ has a refinement $B$ into $k \le 16t/\eps^4$ sets so that $ind(B) \geq ind(A) + \frac{\eps^4}{2}$ (see, e.g., the proof of Theorem 1.1 in \cite{RSsurvey} and the proof of Theorem 6 in \cite{MS}).

%Thus it is enough to ask for $f$ of type $f(k)=\mathrm{poly}(1/\eps)\cdot k$ and $\g=\mathrm{poly}(\eps)$.

\begin{lemma}\label{lem:robust_to_regular}
For every $\eps>0$ there exists $\gamma=\g_{\ref{lem:robust_to_regular}}(\eps)=\mathrm{poly}(\eps)$ and $f=f_{\ref{lem:robust_to_regular}}^{(\epsilon)}:\mathbb{N}\to\mathbb{N}$ satisfying $f(x)=\mathrm{poly}(1/\eps)\cdot x$ and such that every $(f,\g)$-final partition of a graph is also $\eps$-FK-regular$^{\star}$.
\end{lemma}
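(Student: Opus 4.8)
The plan is to obtain the lemma essentially for free from the observation recorded just before its statement, so that the only real content is choosing the constants and rephrasing matters through the contrapositive. First I would set $\g = \g_{\ref{lem:robust_to_regular}}(\eps) := \eps^4/2$ and $f(x) = f_{\ref{lem:robust_to_regular}}^{(\eps)}(x) := \lceil 16x/\eps^4\rceil$; these plainly satisfy $\g = \mathrm{poly}(\eps)$ and $f(x) = \mathrm{poly}(1/\eps)\cdot x$, as required. I would then prove the contrapositive: if an equipartition $A$ of a graph $G$ into $t$ parts is \emph{not} $\eps$-FK-regular$^\star$, then $A$ is not $(f,\g)$-final, which by the definition of a final partition amounts to producing an equipartition $B$ of $V(G)$ with at least $t$ and at most $f(t)$ parts and with $ind(B)\ge ind(A)+\g$.

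The key step is a single application of the observation preceding the lemma. Since $A$ is not $\eps$-FK-regular$^\star$, that observation yields a refinement $B$ of $A$ into $k\le 16t/\eps^4$ parts with $ind(B)\ge ind(A)+\eps^4/2 = ind(A)+\g$. It then only remains to check the bookkeeping: as $B$ refines the $t$-part partition $A$ it has at least $t$ parts, and as $k\le 16t/\eps^4\le\lceil 16t/\eps^4\rceil = f(t)$ it has at most $f(t)$ parts. Hence $B$ is an equipartition with between $t$ and $f(t)$ parts and with index at least $ind(A)+\g$, which is precisely the assertion that $A$ fails to be $(f,\g)$-final. This establishes the contrapositive, and with it the lemma.

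The one point that needs care — and the step I would treat as the main obstacle — is that I want the observation in the form where the refinement $B$ is an \emph{equipartition}. This is indeed how the refinement is produced inside the proofs of the Frieze--Kannan weak regularity lemma (Lemma \ref{lem:fk_reg_lem}): one cuts each $V_i$ along the sets $S,T$ witnessing non-regularity and then equitizes, losing only an amount of index that vanishes as $n\to\infty$ and is comfortably absorbed by the slack between the true index gain and the deliberately generous quantities $16t/\eps^4$ and $\eps^4/2$ appearing in the observation (enlarging the constant in the definition of $f$ if necessary, which keeps $f(x)=\mathrm{poly}(1/\eps)\cdot x$). If one prefers to avoid any hypothesis on $n$, one can note that in the (small) range of $n$ in which such an equitization into at most $16t/\eps^4$ equal parts is not yet possible, one may instead refine all the way down to the partition $D$ into singletons; refining never decreases the index, so $ind(D)\ge ind(A)+\eps^4/2\ge ind(A)+\g$, and in that range $|D|=n\le 16t/\eps^4=f(t)$, so $D$ itself witnesses that $A$ is not $(f,\g)$-final. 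Everything else is routine.
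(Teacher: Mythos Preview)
Your proposal is correct and is exactly the argument the paper has in mind: the paper gives no separate proof of this lemma, treating it as immediate from the preceding observation, and your contrapositive with $\gamma=\eps^4/2$ and $f(x)=\lceil 16x/\eps^4\rceil$ is precisely the intended one-line derivation. Your extra care about $B$ being an equipartition is in fact already absorbed by the paper's convention that a ``refinement'' is by definition an equipartition, so that subtlety does not actually arise here.
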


\begin{lemma}\label{lem:resp_reg}
For every $s$ and $\eps>0$ there are $\g=\g_{\ref{lem:resp_reg}}(\eps)$, $T=T_{\ref{lem:resp_reg}}(s,\eps)$, $f=f_{\ref{lem:resp_reg}}^{(\eps)}$ and $N=N_{\ref{lem:resp_reg}}(\eps,s)$ so that
$$
\g=\mathrm{poly}(\eps),~~T=s \cdot 2^{\mathrm{poly}(1/\eps)},~~ f(x)=x\cdot2^{\mathrm{poly}(1/\eps)},~~N=\mathrm{poly}(s) \cdot 2^{\mathrm{poly}(1/\eps)}
$$
and the following holds.
Suppose $G$ has at least $N$ vertices and $A$ is an $(f,\g)$-final partition of $G$ into at most $s$ sets and that $S$ is a $\g$-signature of $A$.
Then for every $G^{\prime}$ on the same vertex set of $G$, there exists a refinement $A'$ of $A$ into $t\le T$ sets so that
\begin{itemize}
\item[$(i)$] $A'$ is an $\eps$-FK-regular$^{\star}$ partition of $G'$.
\item[$(ii)$] Every refinement $A''$ of $A$ with $t\le T$ sets (and in particular $A'$), is an $\eps$-FK-regular$^{\star}$ partition of $G$.
\item[$(iii)$] For every refinement $A''$ of $A$ with $t\le T$ sets, the extension $S''$ of $S$ (in the sense of Definition \ref{def:extension}) with respect to $A''$ is an $\eps$-signature of $A''$ with respect to $G$ (note that $A'$ is such an $A''$).
\end{itemize}
\end{lemma}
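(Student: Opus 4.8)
The plan is to prove the three items of the lemma essentially separately: item~(i) by applying the Frieze--Kannan weak regularity lemma (Lemma~\ref{lem:fk_reg_lem}) directly to $G'$, and items~(ii) and (iii) by exploiting the $(f,\g)$-finality of $A$ --- for (ii) via the index-increment step underlying the proof of Lemma~\ref{lem:fk_reg_lem} (the observation recorded right before Lemma~\ref{lem:robust_to_regular}) together with Lemma~\ref{lem:close_ind}, and for (iii) via Lemma~\ref{lem:add_index}.

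First I would fix the constants. Write $\ell\le s$ for the number of parts of $A$. Put $\g:=\min\{\g_{\ref{lem:add_index}}(\eps),\ \eps^{4}/8\}=\mathrm{poly}(\eps)$. Choose $T:=\ell\cdot 2^{\mathrm{poly}(1/\eps)}$ with the exponent large enough that $T\ge T_{\ref{lem:fk_reg_lem}}(\ell,\eps)$, and then choose $f=f_{\ref{lem:resp_reg}}^{(\eps)}$ of the form $f(x)=x\cdot 2^{\mathrm{poly}(1/\eps)}$ with an even larger exponent, so that $f(\ell)\ge 16T/\eps^{4}$; there is no circularity here since $16/\eps^{4}\le 2^{\mathrm{poly}(1/\eps)}$, and the resulting $T$ satisfies $T\le s\cdot 2^{\mathrm{poly}(1/\eps)}$ while $f$ has the claimed form. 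Finally take $N=N_{\ref{lem:resp_reg}}(\eps,s)$ large enough --- but still $\mathrm{poly}(s)\cdot 2^{\mathrm{poly}(1/\eps)}$ --- that every application below of Lemmas~\ref{lem:fk_reg_lem}, \ref{lem:close_ind} and \ref{lem:add_index} (each with at most $T$ parts) is valid, using that each of their vertex thresholds is at most $\mathrm{poly}(T)\cdot 2^{\mathrm{poly}(1/\eps)}$.

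Now the three items. For (i), apply Lemma~\ref{lem:fk_reg_lem} to $G'$ with the equipartition $A$ (which has $\ell$ parts) and regularity parameter $\eps$; this produces a refinement $A'$ of $A$ into at most $T_{\ref{lem:fk_reg_lem}}(\ell,\eps)\le T$ parts with $d_{\square}^{\star A'}(G')<\eps$, which is exactly (i). For (ii), let $A''$ be any refinement of $A$ into $m\le T$ parts and suppose, for contradiction, that $A''$ is not $\eps$-FK-regular$^{\star}$ with respect to $G$. By the observation preceding Lemma~\ref{lem:robust_to_regular}, $A''$ has a refinement $B$ into at most $16m/\eps^{4}\le 16T/\eps^{4}\le f(\ell)$ parts with $ind(B)\ge ind(A'')+\eps^{4}/2$, all indices taken in $G$. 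Since $B$ refines $A''$ which refines $A$, the partition $B$ refines $A$ and has at least $\ell$ parts; and by Lemma~\ref{lem:close_ind} we have $ind(A'')\ge ind(A)-\eps^{4}/8$, so $ind(B)\ge ind(A)+3\eps^{4}/8>ind(A)+\g$. Thus $B$ is an equipartition with between $\ell$ and $f(\ell)$ parts whose index exceeds $ind(A)+\g$, contradicting the $(f,\g)$-finality of $A$; hence every such $A''$ is $\eps$-FK-regular$^{\star}$ with respect to $G$, and in particular so is $A'$. For (iii), let $A''$ be any refinement of $A$ into $m\le T\le f(\ell)$ parts; by the $(f,\g)$-finality of $A$ we get $ind(A'')\le ind(A)+\g\le ind(A)+\g_{\ref{lem:add_index}}(\eps)$, so Lemma~\ref{lem:add_index}, applied to the pair $(A,A'')$ with the $\g$-signature $S$ (which is also a $\g_{\ref{lem:add_index}}(\eps)$-signature since $\g\le\g_{\ref{lem:add_index}}(\eps)$) and with $S''$ the extension of $S$ to $A''$, shows that $S''$ is an $\eps$-signature of $A''$ with respect to $G$. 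This is (iii), and $A'$ is one such $A''$.

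The main obstacle, as anticipated in the parameter choice, is the joint calibration of $T$, $f$ and $\g$ so that the finality of $A$ can actually be invoked in (ii) and (iii): one needs the auxiliary refinements produced there (of size up to $16T/\eps^{4}$ in (ii), and $A''$ itself in (iii)) to always have at most $f(\ell)$ parts, which forces $T$ to be a bounded exponential-in-$1/\eps$ blow-up of $\ell=|A|$ (hence $\le s\cdot 2^{\mathrm{poly}(1/\eps)}$) and $f$ to be a larger such blow-up, and these must be introduced in the right order to avoid circularity. Everything else is routine bookkeeping; the only remaining care needed is to take $N$ large enough that the $\mathrm{poly}(\eps)$-sized error slacks in Lemmas~\ref{lem:close_ind}, \ref{lem:add_index} and \ref{lem:fk_reg_lem} all stay comfortably within the budget $\g=\mathrm{poly}(\eps)$.
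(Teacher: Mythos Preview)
Your proposal is correct and follows essentially the same route as the paper's proof; the only difference is that for item~(ii) you invoke the index-increment observation directly, whereas the paper packages that step as Lemma~\ref{lem:robust_to_regular} (first showing each $A''$ is itself $(f_{\ref{lem:robust_to_regular}}^{(\eps)},\g_{\ref{lem:robust_to_regular}}(\eps))$-final and then applying that lemma), which is purely cosmetic. One minor bookkeeping point: since the lemma requires $T=T_{\ref{lem:resp_reg}}(s,\eps)$ to be fixed before $A$ is given, you should define $T$ (and check $f(\cdot)\ge 16T/\eps^4$) in terms of $s$ rather than the partition-dependent $\ell=|A|$; as $\ell\le s$ and all the relevant bounds are monotone in the first argument, this changes nothing in the argument.
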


\begin{proof}
Given $s$ and $\eps$ we define
$$
\g=\min\{\frac{1}{2}\g_{\ref{lem:robust_to_regular}}(\eps),\g_{\ref{lem:add_index}}(\eps)\}=\mathrm{poly}(\eps)\;,~~T=T_{\ref{lem:fk_reg_lem}}(s,\eps)=s \cdot 2^{\mathrm{poly}(1/\eps)}\;,
$$
$$
f(x)=f_{\ref{lem:robust_to_regular}}^{(\eps)}(T_{\ref{lem:fk_reg_lem}}(x,\eps))=2^{\mathrm{poly}(1/\eps)}\cdot x \cdot 2^{\mathrm{poly}(1/\eps)}=x \cdot  2^{\mathrm{poly}(1/\eps)}\;,
$$
and
%$$
%N=\max\{T_{\ref{lem:fk_reg_lem}}(s,\eps),N_{\ref{lem:close_ind}}(T_{\ref{lem:fk_reg_lem}}(s,\eps),\g)),N_{\ref{lem:add_index}}(T,\eps)\}=\mathrm{poly}(s) \cdot 2^{\mathrm{poly}(1/\eps)}\;.
%$$
$$
N=\max\{T,N_{\ref{lem:close_ind}}(T,\g),N_{\ref{lem:add_index}}(T,\eps)\}=\mathrm{poly}(s) \cdot 2^{\mathrm{poly}(1/\eps)}\;.
$$
Given an $(f,\gamma)$-final partition $A$, and assuming that $N\ge N_{\ref{lem:fk_reg_lem}}(s,\eps)$, Lemma \ref{lem:fk_reg_lem} produces a refinement $A'$ of $A$ that partitions $G^{\prime}$ into at most $T_{\ref{lem:fk_reg_lem}}(s,\eps)$ sets and is $\eps$-FK-regular$^{\star}$ with respect to $G^{\prime}$.
% We claim that this partition $A'$ satisfies the assertions of the lemma, noting that it clearly satisfies $(i)$.
It remains to prove Items $(ii)$-$(iii)$.
Lemma \ref{lem:close_ind} asserts that with respect to $G$, every partition $A''$ that refines $A$ with at most $T$ sets, satisfies $ind(A'')\ge ind(A)-\g\ge ind(A)-\frac{1}{2}\g_{\ref{lem:robust_to_regular}}(\eps)$ (since $N>N_{\ref{lem:close_ind}}(T,\g)$).
This implies that in $G$, the partition $A''$ is $(f_{\ref{lem:robust_to_regular}}^{(\eps)},\g_{\ref{lem:robust_to_regular}}(\eps))$-final. Indeed, if there was a partition $C$ with at most $f_{\ref{lem:robust_to_regular}}^{(\eps)}(T_{\ref{lem:fk_reg_lem}}(x,\eps))$ sets for which $ind(C)>ind(A'')+\g_{\ref{lem:robust_to_regular}}(\eps)$ in $G$, then this would imply that $ind(C)>ind(A)+\g_{\ref{lem:robust_to_regular}}(\eps)-\frac{1}{2}\g_{\ref{lem:robust_to_regular}}(\eps)$ contradicting the $(f,\gamma)$-finality of $A$. We may thus infer via Lemma \ref{lem:robust_to_regular}, that $A''$ is also an $\eps$-FK-regular$^{\star}$ partition with respect to $G$, establishing item $(ii)$.
Finally (pun intended), the $(f,\gamma)$-finality of $A$ in $G$ ensures that for every partition $A''$ of $G$ into at most $T$ sets (and in particular for every refinement of $A$ into this many sets)
we have $ind(A'') \leq ind(A)+\g_{\ref{lem:add_index}}(\eps)$. Hence, by Lemma \ref{lem:add_index}, the extension $S''$ of $S$ to such an $A''$ is an $\eps$-signature of $A''$ with respect to $G$, establishing item $(iii)$.
\end{proof}

\subsection{Proof of Lemma \ref{lem:info_from_sig}}
Given $h,\eps$ and $\delta$ we first choose
$$
\g_{0}=\min\{\eps/10,\g_{\ref{lem:sample_with_sig}}(h,\delta/6),\g_{\ref{lem:base_lemma}}(\min\{\eps/2,\g_{\ref{lem:sample_with_sig}}(h,\delta/6)\})\}=\mathrm{poly}(\eps\delta/h)\;,
$$
and then define
$$
\g=\g_{\ref{lem:resp_reg}}(\g_{0})=\mathrm{poly}(\eps\delta/h),~~~ s=\max\{r_{\ref{lem:sample_with_sig}}(h,\delta/6),r_{\ref{prop:dist_sig}}(\eps/10),20h^2/\delta\}=\mathrm{poly}(h/\eps\delta)\;,
$$
$$
f(x)=f_{\ref{lem:resp_reg}}^{(\g_{0})}(x)=x\cdot2^{{\mathrm{poly}(1/\g_{0})}}
=x\cdot2^{\mathrm{poly}(h/\eps\delta)}\;,
$$
to be the constants and function in the statement of Lemma \ref{lem:info_from_sig}, noting that they satisfy the guarantees of that lemma.
Given $t$ as in the statement of Lemma \ref{lem:info_from_sig}, we set
$$
T=T_{\ref{lem:resp_reg}}(t,\g_{0})
$$
and define
\begin{align}
N=\max\{N_{\ref{lem:sample_with_sig}}(h,T,\delta/6),N_{\ref{prop:dist_sig}}(\eps/10,T),N_{\ref{lem:resp_reg}}(\g_0,s),N_{\ref{lem:base_lemma}}(t,\g_0)\}
 = \mathrm{poly}(t) \cdot 2^{\mathrm{poly}(h/\eps\delta)}\nonumber\;,
\end{align}
to be the constant in Lemma \ref{lem:info_from_sig}.

Given a family of graphs ${\cal H}$ on at most $h$ vertices, we define a family of signatures as follows
$$
\mathcal{C}_{\delta,{\cal H},T}=\{C:|C|\leq T ~\mbox{and}~ind(H,C)\leq \delta/2~\mbox{for every}~ H \in {\cal H}\}\;.
$$
In order for $\mathcal{C}_{\delta,{\cal H},T}$ to be finite, we only put in it signatures $C$ with edge weights $\eta_{i,j}$ that are integer multiples of $\beta=\min\{\eps/10,\delta/10h^2\}$.
Intuitively, this is the set of signatures ``certifying'' (hence $C$) that a graph with that signature is close to being induced ${\cal H}$-free.
We also define $\mathcal{S}_{T}$ to be the set of all signatures on up to $T$ parts, that are extensions\footnote{Note that strictly speaking,
an extension per Definition \ref{def:extension} must be relative to a partition $A$ and its refinement $B$, while here we only have the signature $S$. So what we mean here is that if one takes some graph that has a partition $A$ whose $0$-signature is $S$, then
$\mathcal{S}_{T}$ is the family of all signatures that one obtains by taking all refinements of $A$ into at most $T$ sets,
and then taking the extension of $S$ to these refinements. Of course we do not need any graph in order to produce ${\cal S}_T$; we just
break the ``parts'' of $S$ into a total of at most $T$ new ``parts'', and then define the densities $\eta'_{i,j}$ between the new vertices as in Definition \ref{def:extension}.} of $S$. Intuitively, these are the signatures one can obtain by refining
$A$ into at most $T$ sets (recall that the crucial point is that the algorithm only has access to $S$ and not to $G$).

Suppose now that we are given a $\gamma$-signature $S$ of some $(f,\gamma)$-final (with the above defined $f,\gamma$) partition $A$ of a graph $G$, so that $S$ has $t\geq s$ parts and $G$ has at least $N$ vertices.
The algorithm checks if there are $S'\in \mathcal{S}_{T}$ and $C\in {\cal C}_{\delta,{\cal H},T}$ satisfying $d_1(S',C)\le \alpha-\frac{\eps}{2}$. If there is such a pair, the algorithm says that case $(i)$ holds, otherwise it says that case $(ii)$ holds. We now prove the correctness of the algorithm.

\paragraph{Proof of first direction:} Suppose there is a graph $G^{\prime}$ which is $(\alpha-\eps)$-close to $G$, and satisfies
$ind(H,G')=0$ for every $H \in {\cal H}$. We will show that the algorithm will declare that case $(i)$ holds.

Recall that $A$ is an $(f,\g)$-final partition of $G$ into $t\ge s$ sets and that $S$ is a $\g$-signature of $A$. By Lemma \ref{lem:resp_reg}, there exists a refinement $A'$ of $A$ into at most $T$ sets so that $A'$ is $\g_{0}$-FK-regular$^{\star}$ for both $G$ {\bf and} $G^{\prime}$. Moreover, denoting by $S'$ the corresponding extension of $S$ to $A'$, we have that $S'$ is a $\g_{0}$-signature of $A'$ with respect to $G$. Note that $S'\in \mathcal{S}_{T}$.
By the choice of $\g_{0}$, this implies that $A'$ is $\g_{\ref{lem:sample_with_sig}}(h,\delta/6)$-FK-regular$^{\star}$ for both $G$ and $G^{\prime}$, and that $S'$ is a $\frac{1}{10}\eps$-signature of $A'$ with respect to $G$.
Let $C'$ be the $0$-signature of $A'$ over $G^{\prime}$. Lemma \ref{lem:sample_with_sig} (using $A'$ and $G^{\prime}$) implies that $|ind(H,G')-ind(H,C')| \leq \delta/6$ for all $H \in {\cal H}$.
Thus $ind(H,C') \le \delta/6$ for all $H \in {\cal H}$.
Clearly there is a signature $C$ of size $C'$ so that all of $C$'s weights are constant multiples of $\beta$
and $d_1(C',C) \leq \beta$. Since $d_{\square}(C',C) \leq d_1(C',C) \leq \delta/10h^2$
we infer from Lemma \ref{prop:small_var_under_regularity} (applied on $\frac{\delta}{10h^2}$, as $s\ge\frac{20h^2}{\delta}$) that $ind(H,C)\le \delta/6 +\delta/5 < \delta/2$ for all $H \in {\cal H}$, so
$C\in \mathcal{C}_{\delta, {\cal H},T}$.
In addition, by Claim \ref{prop:dist_sig} (since $A'$ has at least $r_{\ref{prop:dist_sig}}(\eps/10)$ parts and assuming that $n$ is large enough), we infer that $d_1(S',C)\leq \alpha-\frac{\eps}{2}$ (since $G$ and $G^{\prime}$ are $(\alpha-\eps)$-close and $d_1(C,C')\leq \eps/10$). Thus, $S'$ and $C$ provide a witness that the algorithm will indeed declare that case $(i)$ holds.

\paragraph{Proof of second direction:} Suppose the algorithm declares that case $(i)$ holds. We show that in this case
there is a graph $G'$, which is $\alpha$-close to $G$, and satisfies $ind(H,G')<\delta$ for all $H\in\mathcal{H}$

%let $A$ be an $(f,\g)$-final partition of $G$ into $t\ge s$ sets and let $S$ be a $\g$-signature of $A$.
Indeed, if the algorithm declared that case $(i)$ holds then there are signatures $S'\in \mathcal{S}_{T}$ and $C \in \mathcal{C}_{\delta,{\cal H},T}$ satisfying $d_1(S',C) \le \alpha-\frac{\eps}{2}$.
As $S'\in\mathcal{S}_{T}$, there is a refinement $A'$ of $A$, so that $S'$ is the extension of $S$ according to $A'$.
Lemma \ref{lem:resp_reg} (regarding $A'$ as a possible refinement of $A$ with respect to $G$) asserts that $S'$ is a $\g_{0}$-signature of $A'$ (with respect to $G$), which by the choice of $\g_{0}$ means that it is a $\g_{\ref{lem:base_lemma}}(\min\{\frac{\eps}{2},\g_{\ref{lem:sample_with_sig}}(h,\delta/6)\})$-signature for $A'$ with respect to $G$.
Now, Lemma \ref{lem:base_lemma} (applied with $A'$ as the $\g_{0}$-FK-regular$^{\star}$ partition of $G$, and with $S'$ as $S$ and $C$ as $S'$) implies that there is a graph $G^{\prime}$ that is $(\alpha-\frac{\eps}{2}+\frac{\eps}{2})$-close to $G$, namely $\alpha$-close to $G$, and for which $C$ is a $\g_{\ref{lem:sample_with_sig}}(h,\delta/6)$-signature of $A'$, which in turn is $\g_{\ref{lem:sample_with_sig}}(h,\delta/6)$-FK-regular over $G^{\prime}$ . Lemma \ref{lem:sample_with_sig} implies that $|ind(H,G')-ind(H,C)|\leq \delta/6$ for all $H \in  {\cal H}$. Thus, $ind(H,G')<\delta/2+\delta/6<\delta$ for all $H\in\mathcal{H}$ as required.
Hence we have found the required $G'$.

\section{Proof of Theorem \ref{thm:main_gen}}\label{sec:conc}
The proof of Theorem \ref{thm:main_gen} is very similar to that of Theorem \ref{thm:main}.
In order to assist the reader who is already familiar with the proof of Theorem \ref{thm:main}, we mention in several places where certain
lemmas are analogous to lemmas we introduced in one of the previous sections.
The idea is the following: by a theorem of Goldreich and Trevisan \cite{GT}, every testable property is testable
by a canonical tester, which samples a set of vertices of size $q=q_{\cal P}(\varepsilon)$ and accepts/rejects based on the
graph induced by these $q$ vertices. Hence the acceptance/rejection of the algorithm only depends on the number of induced
copies in $G$ of graphs on $q$ vertices. Hence, turning a graph into a graph satisfying ${\cal P}$ is equivalent
to turning it into a graph with a certain number of copies of certain graphs on $q$ vertices. As evident, this is very
similar to the case of Theorem \ref{thm:main} where we wanted to have a very small number of copies of graphs not in ${\cal P}$.
The reason why there is an additional exponential factor is that we need to control the number of induced copies of {\em all} graphs
on $q$ vertices.

We now state the key lemmas, which are variants of lemmas we used in the proof of Theorem \ref{thm:main}.

\begin{definition}\label{def:var_dist}
Given two distributions $\mu$ and $\nu$ over a finite family $\mathcal{H}$ of combinatorial structures, their variation distance is defined as:
$|\mu-\nu|=\frac{1}{2}\sum_{H\in\mathcal{H}}|\Pr_{\mu}(H)-\Pr_{\nu}(H)|$
\end{definition}

\begin{lemma}\label{lem:sub_var_dist}
If two distributions $\mu$ and $\nu$ over a finite family  $\mathcal{H}$ of combinatorial structures satisfy $|\mu-\nu|\le\delta$ , then for any set $A\subset\mathcal{H}$ we have
$|\Pr_{\mu}(A)-\Pr_{\nu}(A)|\le\delta$
\end{lemma}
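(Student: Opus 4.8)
The plan is to reduce the statement to the definition of variation distance together with a standard triangle-inequality argument. Write $p = \Pr_\mu(A) = \sum_{H \in A} \Pr_\mu(H)$ and $q = \Pr_\nu(A) = \sum_{H \in A} \Pr_\nu(H)$, and similarly for the complement $A^c = \mathcal{H} \setminus A$. Then $|p - q| \le \sum_{H \in A} |\Pr_\mu(H) - \Pr_\nu(H)|$ by the triangle inequality.

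The key observation is that one can symmetrically bound $|p-q|$ using $A^c$ as well: since $p - q = (1 - \Pr_\mu(A^c)) - (1 - \Pr_\nu(A^c)) = \Pr_\nu(A^c) - \Pr_\mu(A^c)$, we also get $|p-q| \le \sum_{H \in A^c} |\Pr_\mu(H) - \Pr_\nu(H)|$. Averaging the two bounds,
$$
|p - q| \le \tfrac12 \sum_{H \in A} |\Pr_\mu(H) - \Pr_\nu(H)| + \tfrac12 \sum_{H \in A^c} |\Pr_\mu(H) - \Pr_\nu(H)| = \tfrac12 \sum_{H \in \mathcal{H}} |\Pr_\mu(H) - \Pr_\nu(H)| = |\mu - \nu| \le \delta\;,
$$
which is exactly the claim. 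This is entirely routine; there is no real obstacle, only the minor point of noticing the symmetric bound via the complement, which is what lets the factor $\tfrac12$ in Definition 5.3 survive the estimate rather than producing a weaker bound of $2\delta$.

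If one prefers to avoid the averaging trick, an alternative is to split each of the two sums by sign: let $A^+ = \{H \in \mathcal{H} : \Pr_\mu(H) \ge \Pr_\nu(H)\}$ and $A^- = \mathcal{H} \setminus A^+$, and note that $\sum_{H \in A^+}(\Pr_\mu(H) - \Pr_\nu(H)) = \sum_{H \in A^-}(\Pr_\nu(H) - \Pr_\mu(H)) = |\mu - \nu| \le \delta$ since both tails of a signed measure summing to zero have equal mass. Then $p - q = \sum_{H \in A \cap A^+}(\Pr_\mu(H) - \Pr_\nu(H)) - \sum_{H \in A \cap A^-}(\Pr_\nu(H) - \Pr_\mu(H))$, and each of the two nonnegative sums is at most $\delta$ minus... — actually each is at most $|\mu-\nu| \le \delta$ on its own, and since they have opposite contributions, $|p-q| \le \max$ of the two $\le \delta$. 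Either route finishes the lemma in a couple of lines; I would present the first (averaging) version as the cleanest.
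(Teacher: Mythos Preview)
Your proof is correct; the averaging argument via the complement is the standard way to extract the factor $\tfrac12$ from Definition~\ref{def:var_dist}. The paper itself does not supply a proof of this lemma at all --- it is stated without proof as a well-known fact about total variation distance --- so there is nothing to compare against beyond noting that your argument is the routine one the authors presumably had in mind.
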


\begin{lemma}\label{lem:graph_var_dist} Suppose that $\mu$ and $\nu$ are two probability distributions over graphs with set of vertices $\{v_{1},\dots,v_{q}\}$, where each edge $v_{i}v_{j}$ is independently chosen to be an edge with probability $\mu_{i,j}$ and $\nu_{i,j}$ respectively. If $|\mu_{i,j}-\nu_{i,j}|\le\epsilon/\binom{q}{2}$ for every $1\le i<j\le q$, then the variation distance between  $\mu$ and $\nu$ is bounded by $\epsilon$.
\end{lemma}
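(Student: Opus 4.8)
The plan is to exploit the fact that both $\mu$ and $\nu$ are \emph{product} measures over the $m := \binom{q}{2}$ independent edge slots, together with the subadditivity of total variation distance for product measures. Concretely, enumerate the pairs $\{i,j\}$ with $1 \le i < j \le q$ as $e_1,\dots,e_m$, and for $0 \le \ell \le m$ let $\rho_\ell$ be the distribution on graphs on $\{v_1,\dots,v_q\}$ in which the slots $e_1,\dots,e_\ell$ are sampled according to $\nu$ (slot $e_r$ is an edge with probability $\nu_{e_r}$) and the slots $e_{\ell+1},\dots,e_m$ are sampled according to $\mu$, all independently. Then $\rho_0 = \mu$ and $\rho_m = \nu$, so by the triangle inequality for variation distance,
\[
|\mu - \nu| \;\le\; \sum_{\ell=1}^{m} |\rho_{\ell-1} - \rho_\ell|.
\]

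Next I would bound each term $|\rho_{\ell-1}-\rho_\ell|$. The distributions $\rho_{\ell-1}$ and $\rho_\ell$ are product distributions that agree on all coordinates $e_1,\dots,e_{\ell-1},e_{\ell+1},\dots,e_m$ and differ only in coordinate $e_\ell$, where $\rho_{\ell-1}$ places a $\mathrm{Bern}(\mu_{e_\ell})$ and $\rho_\ell$ places a $\mathrm{Bern}(\nu_{e_\ell})$. Factoring the common product over the other coordinates out of the sum defining the variation distance, and using that those probabilities sum to $1$, shows that for two product measures differing in a single coordinate the variation distance equals the variation distance of that coordinate's marginal; hence
\[
|\rho_{\ell-1}-\rho_\ell| \;=\; \tfrac12\big(|\mu_{e_\ell}-\nu_{e_\ell}| + |(1-\mu_{e_\ell})-(1-\nu_{e_\ell})|\big) \;=\; |\mu_{e_\ell}-\nu_{e_\ell}|.
\]

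Combining the two displays with the hypothesis $|\mu_{i,j}-\nu_{i,j}| \le \epsilon/\binom{q}{2}$ for every $i<j$ yields
\[
|\mu-\nu| \;\le\; \sum_{\ell=1}^{m} |\mu_{e_\ell}-\nu_{e_\ell}| \;\le\; m \cdot \frac{\epsilon}{\binom{q}{2}} \;=\; \epsilon,
\]
as claimed. The only mildly delicate point — the step I would write out in full — is the single‑coordinate computation identifying $|\rho_{\ell-1}-\rho_\ell|$ with the marginal variation distance $|\mu_{e_\ell}-\nu_{e_\ell}|$; everything else is the triangle inequality and arithmetic. Alternatively, one can phrase the whole argument via couplings: couple $\rho_{\ell-1}$ and $\rho_\ell$ so that they agree on every slot except $e_\ell$ and disagree on $e_\ell$ with probability exactly $|\mu_{e_\ell}-\nu_{e_\ell}|$, then chain these couplings to couple $\mu$ and $\nu$ so that they disagree on some slot with probability at most $\sum_\ell |\mu_{e_\ell}-\nu_{e_\ell}| \le \epsilon$, whence $|\mu-\nu|\le\epsilon$.
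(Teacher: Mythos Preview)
Your proof is correct. The paper itself does not supply a proof of this lemma; it is stated as a standard fact and then used in the proof of Lemma~\ref{lem:sample_with_sig_gen}. Your hybrid argument (equivalently, the coupling version you sketch at the end) is the standard way to establish subadditivity of total variation distance for product measures, and every step is valid as written. The single-coordinate computation you flag as ``mildly delicate'' is indeed routine: factoring out the common product over the remaining coordinates and summing gives exactly the marginal variation distance $|\mu_{e_\ell}-\nu_{e_\ell}|$, so there is nothing further to justify.
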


\begin{definition}[$q$-statistic]\label{def:q-statistic} The q-statistic of a graph $G$ is the probability distribution over all (labeled) graphs with q vertices that result from picking at random q distinct vertices of G and considering the induced subgraph.
For a given graph $H$ we denote the probability for obtaining $H$ when drawing a graph according to the q-statistic by $\Pr_{G}(H)$.
\end{definition}

\begin{definition}\label{def:signature}
For an equipartition $A = \{V_{1},\dots, V_{t}\}$ of $G$, and a signature $S = (\eta_{i,j} )_{1\le i<j\le t}$ of $A$, the perceived q-statistic according to $S$ is the following distribution $\Pr_S$ over labelled graphs with $q$ vertices $v_1,\dots,v_q$.
Start by choosing a uniformly random sequence without repetitions of indices $i_{1},\dots, i_{q}$ from $1,\dots,t.$
Then, independently, take every $v_{k}v_{l}$ for $k<l$ to be an edge with probability $\eta_{i_{k},i_{l}}$if $i_{k}<i_{l}$ and with probability  $\eta_{i_{l},i_{k}}$if $i_{l}<i_{k}$. Then $\Pr_S(H)$ is defined as the probability that the resulting labelled graph equals $H$.
% Denote the probability for a given graph H to appear as $\Pr_{S}(H)$.
\end{definition}

The following lemma will replace Lemma \ref{lem:induced} in the proof of Theorem \ref{thm:main_gen}.

\begin{lemma}[see \cite{GT}]\label{lem:cannon_tets}
If there is an $\eps$-test for a graph property $\mathcal{P}$ that makes $Q = Q(\eps)$ edge queries, then there exists an appropriate family $\mathcal{H}$ of labeled graphs on $q=2Q$ vertices such that any graph $G$ which satisfies $\mathcal{P}$, satisfies also $\Pr_{G}(\mathcal{H})\ge\frac{2}{3}$, and any graph $G$ that is $\eps$-far from satisfying $\mathcal{P}$, satisfies also $\Pr_{G}(\mathcal{H})<\frac{1}{3}$.
\end{lemma}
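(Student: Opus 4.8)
The plan is to carry out the Goldreich--Trevisan reduction \cite{GT}, in three stages: first normalise the given tester so that it ``discovers'' the vertices it touches at random; then replace it by a canonical tester that samples $q=2Q$ vertices and reads off the induced subgraph; and finally derandomise the canonical tester's decision rule to obtain the family $\mathcal{H}$. For the normalisation, let $T$ be the assumed $\eps$-tester, making at most $Q$ edge queries with two-sided error at most $1/3$. Running a constant number of independent copies of $T$ and taking a majority vote lowers the error probability below $1/6$ while increasing the number of queries only by a constant factor; as all bounds in this paper are stated up to constants, we keep denoting the query complexity by $Q$. Next, since $\mathcal{P}$ and $\mathrm{dist}_{\mathcal{P}}$ are invariant under relabelling the vertices of $G$, we may precompose $T$ with a uniformly random permutation of $V(G)$ without changing its query complexity or its acceptance probability on any graph $G$. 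The resulting tester $T'$ has the form: it maintains the set $U$ of vertices it has already touched, and whenever a query involves a vertex not in $U$, that vertex is a uniformly random element of $V(G)\setminus U$, which is then added to $U$. As $T'$ makes at most $Q$ queries, throughout its run $|U|\le 2Q=q$.

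For the canonical tester, consider the following randomised algorithm $C$, applied to graphs $G$ on at least $q$ vertices. First sample an ordered tuple $(v_1,\dots,v_q)$ of distinct vertices of $G$ uniformly at random and query all $\binom{q}{2}$ pairs among them, obtaining the induced labelled graph $H$ on the vertex set $[q]$. Then simulate $T'$: each time $T'$ asks for a fresh vertex, hand it the next unused $v_i$, and answer every pair query of $T'$ using $H$. Because a uniformly random ordered $q$-subset, revealed one vertex at a time, has exactly the distribution of $q$ successive uniform draws without replacement, this simulation is faithful, so $C$ accepts $G$ with the same probability as $T'$, and hence as $T$: more than $5/6$ if $G\in\mathcal{P}$, and less than $1/6$ if $G$ is $\eps$-far from $\mathcal{P}$. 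Moreover the output of $C$ depends on $G$ only through the distribution of the sampled graph $H$ --- which is precisely the $q$-statistic of $G$ --- and through the internal coins $\rho$ of $T'$. Writing $a(H)=\Pr_{\rho}[\,C\text{ accepts}\mid\text{the induced subgraph equals }H\,]$, we obtain $\Pr[\,C\text{ accepts }G\,]=\sum_{H}\Pr_{G}(H)\,a(H)$.

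To derandomise, set $\mathcal{H}=\{H:a(H)\ge 1/2\}$, a family of labelled graphs on $q$ vertices. If $G\in\mathcal{P}$, then, bounding $a(H)$ by $1$ on $\mathcal{H}$ and by $1/2$ off $\mathcal{H}$,
$$
5/6 \;<\; \sum_{H}\Pr_{G}(H)\,a(H) \;\le\; \Pr_{G}(\mathcal{H}) + \tfrac{1}{2}\bigl(1-\Pr_{G}(\mathcal{H})\bigr),
$$
so that $\Pr_{G}(\mathcal{H})>2/3$. If instead $G$ is $\eps$-far from $\mathcal{P}$, then bounding $a(H)$ from below by $1/2$ on $\mathcal{H}$ and by $0$ elsewhere yields $1/6 > \sum_{H}\Pr_{G}(H)a(H) \ge \tfrac{1}{2}\Pr_{G}(\mathcal{H})$, so $\Pr_{G}(\mathcal{H})<1/3$. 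This is exactly the assertion of the lemma.

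The counting in the last two stages is routine; the points to watch are in the first two. One must check that the adaptivity of $T$ causes no difficulty once all $\binom{q}{2}$ pair answers among the sampled vertices are available --- it does not, since every answer $T'$ might request is recorded in $H$ --- and that the ``random discovery'' normalisation is legitimate, which is exactly where the isomorphism-invariance of $\mathcal{P}$ enters. The single quantitative subtlety is that the thresholding argument needs the tester's error probability to be below $1/6$, which is why the preliminary amplification is carried out.
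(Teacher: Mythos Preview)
The paper does not supply its own proof of this lemma; it simply cites Goldreich--Trevisan \cite{GT}. Your proposal is a correct and complete reconstruction of that argument: normalise the tester to discover vertices uniformly at random (using isomorphism-invariance), simulate it on a uniformly sampled $q$-tuple to obtain a canonical tester whose acceptance probability is $\sum_H \Pr_G(H)\,a(H)$, and threshold $a(H)$ at $1/2$ to extract the family $\mathcal{H}$. The arithmetic in the derandomisation step is right, and you correctly identify that an amplification to error below $1/6$ is what makes the $2/3$ and $1/3$ thresholds come out.

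One small tension worth flagging: the lemma as stated fixes $q=2Q$ exactly, while your amplification step inflates $Q$ by a constant factor before the $q=2Q$ bound is applied. You address this by redefining $Q$ post-amplification, which is fine in spirit but does not literally match the statement. In the paper this is harmless, since the only use of the lemma (in the proof of Theorem~\ref{thm:main_gen}) treats $q$ as $\mathrm{poly}(Q(\eps/2))$ anyway; but if one wanted the constant $2$ on the nose, one would need the original tester to already have error below $1/6$, or else accept the weaker separation $\Pr_G(\mathcal{H})\ge 1/3$ versus $\Pr_G(\mathcal{H})<2/3$ that the unamplified argument yields.
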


We now introduce a variant of Lemma \ref{lem:sample_with_sig} that is suited for the proof of Theorem \ref{thm:main_gen}.

\begin{lemma}\label{lem:sample_with_sig_gen}
For every $q$, $\eps$ there are $\g=\g_{\ref{lem:sample_with_sig_gen}}(q,\eps),\ r=r_{\ref{lem:sample_with_sig_gen}}(q,\eps)$ so that
$$
\g=\mathrm{poly}(\eps\cdot2^{-q^2}),~~ r=\mathrm{poly}(1/\eps\cdot2^{q^2})
$$
and for every $\g$-signature $S$ of a $\g$-FK-regular equipartition $A$ into $t\ge r$ sets, of a graph $G$ on $n\ge N_{\ref{lem:sample_with_sig_gen}}(q,\eps,t)=\mathrm{poly}(t/\eps)2^{\mathrm{poly}(q)}$ vertices,
we have $|\Pr_{S}-\Pr_{G}|\le\eps$, where $\Pr_{G}$ is the $q$-statistic and $\Pr_{S}$ is the perceived $q$-statistic according to $S$.
\end{lemma}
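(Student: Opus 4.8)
The plan is to reduce the statement to Lemma \ref{lem:sample_with_sig}, using the fact that both the $q$-statistic $\Pr_G$ and the perceived $q$-statistic $\Pr_S$ are distributions over labelled graphs on $q$ vertices that are determined (up to a small error coming from sampling with versus without repetitions) by the $ind(H,\cdot)$ values of graphs $H$ on $q$ vertices. More precisely, I would first set up notation: fix $\eps$ and $q$, let $h = q$ and pick a target error $\delta$ of order $\eps / 2^{q^2}$ for feeding into Lemma \ref{lem:sample_with_sig}; concretely take $\delta := \eps \cdot 2^{-q^2}/4$ (any fixed power works). Then define
$$
\g := \g_{\ref{lem:sample_with_sig}}(q,\delta) = \mathrm{poly}(\delta/q) = \mathrm{poly}(\eps \cdot 2^{-q^2})\;, \qquad
r := \max\{\, r_{\ref{lem:sample_with_sig}}(q,\delta),\ 2/\g \,\} = \mathrm{poly}(1/\eps \cdot 2^{q^2})\;,
$$
and for given $t$ set $N_{\ref{lem:sample_with_sig_gen}}(q,\eps,t) := \max\{\, N_{\ref{lem:sample_with_sig}}(q,t,\delta),\ \mathrm{poly}(t/\eps) \cdot 2^{\mathrm{poly}(q)} \,\}$, where the second term is chosen large enough to absorb the "repetition" error terms below (it is $\mathrm{poly}(t/\eps)2^{\mathrm{poly}(q)}$ as claimed).

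The core of the argument is a dictionary between the two statistics and the $ind$ functional applied to $G$ (viewed as a weighted $0/1$ graph on $n$ vertices) and to $S$ (viewed as a weighted graph on $t$ vertices, extended to a weighted graph on $n$ vertices by blowing up each part, exactly as in $G_A$ versus $S_A$ in Lemma \ref{lem:blow_up_graph}). For a labelled graph $H$ on $q$ vertices, $\Pr_G(H)$ is the probability that $q$ \emph{distinct} uniformly random vertices of $G$ induce $H$; the "with repetitions" version of this probability equals exactly $ind(H,G)$ by Definition \ref{def:ind} (with $F = H$, $k = n$, $R = G$). The difference between sampling $q$ vertices with and without repetitions is at most $\binom{q}{2}/n \le q^2/n$ in total variation, which is $\le \delta$ once $n \ge q^2/\delta$ (guaranteed by our choice of $N$). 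Similarly, $\Pr_S(H)$ is defined by sampling $q$ distinct indices $i_1,\dots,i_q$ from $[t]$ without repetition and then drawing edges according to the $\eta$'s; the with-repetitions version equals $ind(H, S)$ by the same definition (now $R = S$, $k = t$), and the sampling-with-versus-without error is at most $q^2/t \le \delta$ since $t \ge r \ge 2/\g$ and $\g$ is tiny, so certainly $t \ge q^2/\delta$. (Here I am implicitly using that in Definition \ref{def:signature} the "with repetitions" draw corresponds precisely to summing $ind_\varphi(H,S)$ over \emph{all} $\varphi$, not just injective ones, which is how $ind(H,S)$ is defined in \eqref{eqindR}.) Hence for every labelled $H$ on $q$ vertices,
$$
\bigl|\Pr_G(H) - ind(H,G)\bigr| \le \delta \qquad\text{and}\qquad \bigl|\Pr_S(H) - ind(H,S)\bigr| \le \delta\;.
$$
Now Lemma \ref{lem:sample_with_sig}, applied with parameters $h = q$ and error $\delta$ (its hypotheses are met: $G$ has at least $N_{\ref{lem:sample_with_sig}}(q,t,\delta)$ vertices, $A$ is $\g$-FK-regular with between $r \ge r_{\ref{lem:sample_with_sig}}(q,\delta)$ and $t$ parts, and $S$ is a $\g$-signature of $A$), gives $|ind(H,G) - ind(H,S)| \le \delta$ for every $H$ on $q$ vertices. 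Combining the three displayed bounds via the triangle inequality yields $|\Pr_G(H) - \Pr_S(H)| \le 3\delta$ for each of the at most $2^{\binom{q}{2}} \le 2^{q^2}$ labelled graphs $H$ on $q$ vertices. Therefore, by Definition \ref{def:var_dist},
$$
|\Pr_S - \Pr_G| = \frac12 \sum_{H} \bigl|\Pr_S(H) - \Pr_G(H)\bigr| \le \frac12 \cdot 2^{q^2} \cdot 3\delta = \frac{3}{2}\cdot 2^{q^2}\cdot \frac{\eps \cdot 2^{-q^2}}{4} \le \eps\;,
$$
which is the claimed bound.

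The main obstacle — and the only place where care is genuinely needed — is the bookkeeping in the dictionary step: one must verify that the "with repetitions" versions of $\Pr_G$ and $\Pr_S$ are \emph{exactly} $ind(H,G)$ and $ind(H,S)$ under Definition \ref{def:ind}, paying attention to labelled-versus-unlabelled conventions and to the convention $ind_\varphi = 0$ for non-injective $\varphi$, and that the resulting sampling-correction errors are controlled by the contributions of non-injective maps, which are $O(q^2/n)$ and $O(q^2/t)$ respectively. Everything else is a direct invocation of Lemma \ref{lem:sample_with_sig} together with a union bound over the (at most $2^{q^2}$) labelled graphs on $q$ vertices, which is exactly where the $2^{q^2}$ blow-up in the parameters $\g, r, N$ comes from. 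Note that, in the spirit of Lemma \ref{lem:graph_var_dist}, one could alternatively phrase the final comparison edge-by-edge, but the $ind$-based route above is cleaner since it lets us quote Lemma \ref{lem:sample_with_sig} verbatim.
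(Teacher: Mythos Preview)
Your proposal is correct and follows essentially the same approach as the paper: relate both $\Pr_G$ and $\Pr_S$ to the $ind(H,\cdot)$ functional (up to an $O(q^2/n)$ or $O(q^2/t)$ sampling-with/without-repetition error), compare the $ind$ values using FK-regularity, and then sum over the $2^{\binom{q}{2}}$ labelled graphs on $q$ vertices. The only packaging difference is that you invoke Lemma~\ref{lem:sample_with_sig} as a black box (which already handles both the $G$ vs.\ $S_A$ and the $S_A$ vs.\ $S$ steps), whereas the paper unfolds these steps by hand, using Lemmas~\ref{prop:small_var_under_regularity} and~\ref{lem:blow_up_graph} for the former and a direct coupling via Lemma~\ref{lem:graph_var_dist} for the latter; your route is slightly cleaner but not materially different.
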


\begin{proof}
We set
$$
\gamma=\frac{\eps}{56q^{2}2^{\binom{q}{2}}}=\mathrm{poly}(\eps\cdot2^{-q^2}),~~r\ge\frac{2}{\g}=\mathrm{poly}(1/\eps\cdot2^{q^2}),~~N = \frac{2t}{\g}\;.
$$
Let $\mathcal{H}$ be any family of $q$-vertex graphs.
We need to show that $|\Pr_{G}(\mathcal{H})-\Pr_{S}(\mathcal{H})| \leq \eps$.
Note that for every graph $H$ on $q$ vertices, it follows that $\Pr_{G}(H) = \frac{n^q\cdot(n-q)!}{n!}\cdot ind(H,G)$.
Let $S_A$ be the $0$-signature of $A$. It follows that $\Pr_{S_A}(H)=\frac{n^q\cdot(n-q)!}{n!}\cdot ind(H,S_{A})$, for every graph $H\in \mathcal{H}$.
By Lemma \ref{lem:blow_up_graph}, we have that
$$|ind(H,G_{A})-ind(H,S_{A})|\le \frac{2q^2}{t}+\frac{2tq}{n}\le 2q^2\g $$
for every $H\in\mathcal{H}$.
Also, by Lemma \ref{prop:small_var_under_regularity}, we have
$$|ind(H,G_{A})-ind(H,G)|\le 2q^2\g $$ for every $H\in\mathcal{H}$.
Thus, by triangle inequality, we conclude that $$|\text{Pr}_G(H)-\text{Pr}_{S_{A}}(H)|\le 4q^2\g \cdot \frac{n^q\cdot(n-q)!}{n!} \le 8q^2\g \le \frac{\varepsilon}{7\cdot 2^{\binom{q}{2}}}$$ for every $H\in\mathcal{H}$.
By counting over all $2^{\binom{q}{2}}$ options of $H$ on $q$ vertices, we get $|\Pr_{G}(\mathcal{H})-\Pr_{S_A}(\mathcal{H})|<\frac{\eps}{7}$.
Denote $S = (\eta_{i,j})_{1 \leq i < j \leq t}$.
We have $|\eta_{i,j}-d(V_{i},V_{j})|<\gamma\le\frac{\eps}{7q^{2}}$ for all pairs but $\gamma\binom{t}{2}$, because $S$ is a $\gamma$-signature of $A$.
Hence, with probability at least $1-\frac{4\eps}{7}$ we will sample distinct indices $i_{1},\dots i_{q}$ from $1,\dots,t$ such that every pair $i,j \in \{i_1,\dots,i_q\}$ satisfies $|\eta_{i,j}-d(V_{i},V_{j})|<\frac{\eps}{7q^{2}}$.
Thus by Lemmas \ref{lem:sub_var_dist} and \ref{lem:graph_var_dist}, we get $|\Pr_{S}(\mathcal{H})-\Pr_{S_A}(\mathcal{H})|<\frac{5\eps}{7}$.
All together, $|\Pr_{G}(\mathcal{H})-\Pr_{S}(\mathcal{H})|<\frac{6\eps}{7}<\eps$ as requested.
\end{proof}

% Lemma \ref{lem:final_part_sig} and its proof remain the same, concerning general $k,\g,f$.
% Thus, we  use Lemma \ref{lem:final_part_sig} with general $k,\g,f$, and with no respect to parameter $\zeta$.
% In the terms of the proof, the dependencies of the parameters $N=N_{\ref{lem:final_part_sig}}(k,\g,f), T=T_{\ref{lem:final_part_sig}}(k,\g,f) , q=q_{\ref{lem:final_part_sig}}(k,\g,f)$ are:
% $$
% N = T = T_{2/\g},~~ q = \text{poly}(f(T),1/\g)
% $$

We now introduce a variant of Lemma \ref{lem:info_from_sig} that is suited for the proof of Theorem \ref{thm:main_gen}.

\begin{lemma}\label{lem:info_from_sig_gen}
For every $q$ and $\eps$ there exist $\g=\g_{\ref{lem:info_from_sig_gen}}(q,\eps)$, $s=s_{\ref{lem:info_from_sig_gen}}(q,\eps)$ and $f_{\ref{lem:info_from_sig_gen}}^{(q,\eps)}:\mathbb{N}\to\mathbb{N}$, such that
$$
\g=\mathrm{poly}(\eps\cdot2^{-q^2}),~~
s=\mathrm{poly}\Big(\frac{2^{q^2}}{\eps}\Big),~~f_{\ref{lem:info_from_sig_gen}}^{(q,\eps)}(x)=x\cdot2^{\mathrm{poly}\big(\frac{2^{q^2}}{\eps}\big)}
$$
with the following property. For every family $\mathcal{H}$ of graphs with $q$ vertices, there exists a deterministic algorithm, that receives as an input a $\g$-signature $S$ of an $(f,\g)$-final partition $A$ into $t\ge s$ sets of a graph $G$ with $n\ge N_{\ref{lem:info_from_sig_gen}}(q,\eps,t)=t\cdot 2^{\mathrm{poly}(1/\eps)\cdot2^{\mathrm{poly}(q)}}$ vertices and distinguishes given any $\alpha$ between the following two cases:

\begin{enumerate}
	\item[(i)] $G$ is $(\alpha-\eps)$-close to some graph $G^{\prime}$ for which $\Pr_{G^{\prime}}(\mathcal{H})\ge\frac{2}{3}$.
	\item[(ii)] $G$ is $\alpha$-far from every $G^{\prime}$ for which $\Pr_{G^{\prime}}(\mathcal{H})\geq\frac{1}{3}$.
\end{enumerate}
\end{lemma}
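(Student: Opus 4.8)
The plan is to mirror the proof of Lemma~\ref{lem:info_from_sig} essentially line by line, replacing Lemma~\ref{lem:sample_with_sig} by its ``$q$-statistic'' analogue Lemma~\ref{lem:sample_with_sig_gen}, and replacing the dichotomy ``$ind(H,\cdot)=0$'' versus ``$ind(H,\cdot)<\delta$'' by ``$\Pr_{\cdot}(\mathcal H)\ge 2/3$'' versus ``$\Pr_{\cdot}(\mathcal H)\ge 1/3$'', so that the fixed gap $1/3$ plays the role that the parameter $\delta$ played there. Concretely I would first set
$$
\g_0=\min\bigl\{\eps/10,\ \g_{\ref{lem:sample_with_sig_gen}}(q,1/12),\ \g_{\ref{lem:base_lemma}}(\min\{\eps/2,\ \g_{\ref{lem:sample_with_sig_gen}}(q,1/12)\})\bigr\},
$$
and then $\g=\g_{\ref{lem:resp_reg}}(\g_0)$, $f=f_{\ref{lem:resp_reg}}^{(\g_0)}$, $s=\max\{r_{\ref{lem:sample_with_sig_gen}}(q,1/12),\,r_{\ref{prop:dist_sig}}(\eps/10),\,q\}$, and, given $t$ as in the statement, $T=T_{\ref{lem:resp_reg}}(t,\g_0)$ and $N$ the maximum of $N_{\ref{lem:sample_with_sig_gen}}(q,1/12,T)$, $N_{\ref{prop:dist_sig}}(\eps/10,T)$, $N_{\ref{lem:resp_reg}}(\g_0,t)$ and $N_{\ref{lem:base_lemma}}(T,\min\{\eps/2,\g_{\ref{lem:sample_with_sig_gen}}(q,1/12)\})$. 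Since $\g_{\ref{lem:sample_with_sig_gen}}(q,1/12)=\mathrm{poly}(\eps\cdot2^{-q^2})$, this gives $\g_0=\mathrm{poly}(\eps\cdot2^{-q^2})$, hence $1/\g_0=\mathrm{poly}(2^{q^2}/\eps)=\mathrm{poly}(1/\eps)\cdot2^{\mathrm{poly}(q)}$, and the claimed shapes of $\g,s,f,N$ follow just as in the bookkeeping for Lemma~\ref{lem:info_from_sig}. The deterministic algorithm is the analogue of the one there: fix $\beta=\mathrm{poly}(\eps/q)$ small enough that $\binom q2\beta\le 1/12$ and $\beta\le\eps/20$, let $\mathcal C_{\mathcal H,T}$ be the finite set of signatures $C$ on at most $T$ parts all of whose weights are integer multiples of $\beta$ and with $\Pr_C(\mathcal H)\ge 1/2$ (where $\Pr_C$ is the perceived $q$-statistic of $C$, computable from $C$ and $\mathcal H$), let $\mathcal S_T$ be the finite set of extensions of the input signature $S$ to refinements of $A$ into at most $T$ parts, and declare that case $(i)$ holds iff there are $S'\in\mathcal S_T$ and $C\in\mathcal C_{\mathcal H,T}$ with $d_1(S',C)\le\alpha-\eps/2$ (and case $(ii)$ otherwise).

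For the first direction, suppose $G$ is $(\alpha-\eps)$-close to some $G'$ with $\Pr_{G'}(\mathcal H)\ge 2/3$. Apply Lemma~\ref{lem:resp_reg} to the $(f,\g)$-final partition $A$ (with its parameter set to $t$) and to $G'$: it yields a refinement $A'$ of $A$ into at most $T$ parts that is $\g_0$-FK-regular$^\star$ for \emph{both} $G$ and $G'$, whose associated extension $S'$ of $S$ is a $\g_0$-signature of $A'$ with respect to $G$ (so $S'\in\mathcal S_T$). Let $C'$ be the $0$-signature of $A'$ over $G'$. By the choice of $\g_0$, Lemma~\ref{lem:sample_with_sig_gen} (with $A'$, $G'$ and error $1/12$) together with Lemma~\ref{lem:sub_var_dist} give $\Pr_{C'}(\mathcal H)\ge 2/3-1/12$; rounding each weight of $C'$ to the nearest multiple of $\beta$ yields $C$ with $d_1(C',C)\le\beta$ and, by Lemmas~\ref{lem:sub_var_dist} and \ref{lem:graph_var_dist}, $\Pr_C(\mathcal H)\ge 2/3-1/12-1/12=1/2$, so $C\in\mathcal C_{\mathcal H,T}$. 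Finally, Claim~\ref{prop:dist_sig} (valid since $A'$ has at least $r_{\ref{prop:dist_sig}}(\eps/10)$ parts and $n\ge N_{\ref{prop:dist_sig}}(\eps/10,T)$), applied to $G,G'$, the partition $A'$, and the signatures $S'$ (a $\g_0$-signature of $A'$ over $G$) and $C$ (a $\beta$-signature of $A'$ over $G'$), gives $d_1(S',C)\le(\alpha-\eps)+\eps/10+2(\g_0+\beta)\le\alpha-\eps/2$. Hence $S',C$ witness that the algorithm declares case $(i)$.

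For the converse, suppose the algorithm declares case $(i)$, witnessed by $S'\in\mathcal S_T$ and $C\in\mathcal C_{\mathcal H,T}$ with $d_1(S',C)\le\alpha-\eps/2$, and pick a refinement $A'$ of $A$ into at most $T$ parts whose extension of $S$ is $S'$. By items $(ii)$ and $(iii)$ of Lemma~\ref{lem:resp_reg}, $A'$ is $\g_0$-FK-regular$^\star$ with respect to $G$ and $S'$ is a $\g_0$-signature of $A'$ with respect to $G$. Since $\g_0\le\g_{\ref{lem:base_lemma}}(\min\{\eps/2,\g_{\ref{lem:sample_with_sig_gen}}(q,1/12)\})$, Lemma~\ref{lem:base_lemma} (with $A'$ as its FK-regular$^\star$ partition, $S'$ in the role of ``$S$'', $C$ in the role of ``$S'$'', parameter ``$\eps$''$=\min\{\eps/2,\g_{\ref{lem:sample_with_sig_gen}}(q,1/12)\}$ and ``$\delta$''$=d_1(S',C)$) produces a graph $G'$ that is $\bigl(d_1(S',C)+\min\{\eps/2,\g_{\ref{lem:sample_with_sig_gen}}(q,1/12)\}\bigr)\le\alpha$-close to $G$, for which $A'$ is $\g_{\ref{lem:sample_with_sig_gen}}(q,1/12)$-FK-regular and $C$ is a $\g_{\ref{lem:sample_with_sig_gen}}(q,1/12)$-signature of $A'$. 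Then Lemma~\ref{lem:sample_with_sig_gen} and Lemma~\ref{lem:sub_var_dist} give $|\Pr_C(\mathcal H)-\Pr_{G'}(\mathcal H)|\le 1/12$, so $\Pr_{G'}(\mathcal H)\ge 1/2-1/12>1/3$; thus $G$ is \emph{not} $\alpha$-far from every $G'$ with $\Pr_{G'}(\mathcal H)\ge 1/3$, i.e.\ case $(ii)$ fails. Together with the first direction this shows the algorithm distinguishes $(i)$ from $(ii)$.

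I do not expect a genuinely new obstacle, since the substantive work is already contained in Lemmas~\ref{lem:base_lemma}, \ref{lem:resp_reg} and \ref{lem:sample_with_sig_gen}. The one real point of care is quantitative: the additive error of Lemma~\ref{lem:sample_with_sig_gen} together with the rounding error of the discretization must be smaller than the $2/3$–$1/3$ gap, which is why I take each of them to be $1/12$ (summing to $1/6<1/3$); this is exactly what lets a single $C\in\mathcal C_{\mathcal H,T}$ serve in both directions. The remaining work is the routine verification that $\g_{\ref{lem:resp_reg}}(\g_0)$, $f_{\ref{lem:resp_reg}}^{(\g_0)}$, $T_{\ref{lem:resp_reg}}(t,\g_0)$ and the four lower bounds comprising $N$ collapse to the stated magnitudes, using $\mathrm{poly}(2^{q^2}/\eps)=\mathrm{poly}(1/\eps)\cdot2^{\mathrm{poly}(q)}$.
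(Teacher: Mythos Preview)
Your proposal is correct and follows essentially the same approach as the paper: the same choice of $\g_0$, $\g$, $f$, $T$, the same discretised certificate family $\mathcal{C}_{\mathcal H,T}$ with threshold $\Pr_C(\mathcal H)\ge 1/2$, the same use of Lemma~\ref{lem:resp_reg} together with Lemma~\ref{lem:sample_with_sig_gen} in the first direction and Lemma~\ref{lem:base_lemma} in the second, and the same $1/12$ accuracy to fit inside the $2/3$--$1/3$ gap. The only differences are cosmetic bookkeeping (e.g.\ you include an extra $q$ in the max defining $s$, and your $N$ uses $T$ in a couple of places where the paper uses $t$), none of which affects correctness or the stated magnitudes.
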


\begin{proof}
We define the following parameters, as we did in the proof of Lemma \ref{lem:info_from_sig}.
$$
\gamma_0=\min\left\{\eps/10,\gamma_{\ref{lem:sample_with_sig_gen}}(q,\frac{1}{12}),\gamma_{\ref{lem:base_lemma}}(\min\{\eps/2,\gamma_{\ref{lem:sample_with_sig_gen}}(q,\frac{1}{12})\})\right\}=
\mathrm{poly}(\eps\cdot2^{-q^2}),
$$
and further define,
$$\g=\g_{\ref{lem:resp_reg}}(\g_{0})=\mathrm{poly}(\g_{0})= \mathrm{poly}(\eps\cdot2^{-q^2})$$
$$s=\max\{r_{\ref{lem:sample_with_sig_gen}}(q,1/12),r_{\ref{prop:dist_sig}}(\eps/10)\}=\mathrm{poly}\Big(\frac{2^{q^2}}{\eps}\Big)$$
$$T=T_{\ref{lem:resp_reg}}(t,\g_0)=t\cdot2^{\text{poly}(1/\eps)\cdot2^{\mathrm{poly}(q)}}$$
$$f=f_{\ref{lem:resp_reg}}^{(\g_{0})}(x)=x\cdot2^{\mathrm{poly}(1/\g_{0})}=x\cdot2^{\mathrm{poly}\big(\frac{2^{q^2}}{\eps}\big)}$$
$$
N=\max\{N_{\ref{lem:sample_with_sig_gen}}(q,1/12,t)),N_{\ref{prop:dist_sig}}(\eps/10,T_{\ref{lem:resp_reg}}(t,\g_{0})),N_{\ref{lem:resp_reg}}(s,\g_0),N_{\ref{lem:base_lemma}}(\g_0,t)\} = t\cdot 2^{\mathrm{poly}(1/\eps)\cdot2^{\mathrm{poly}(q)}}\;
$$
Given a family of graphs ${\cal H}$ on at most $q$ vertices, we defined a family of signatures as follows
$$
\mathcal{C}'_{{\cal H},T}=\{C:|C|\leq T ~\mbox{and}~\Pr_{C}(\mathcal{H})\ge 1/2\}\;.
$$
In order for $\mathcal{C}'_{{\cal H},T}$ to be finite, we only put in it signatures $C$ with edge weights $\eta_{i,j}$ that are integer multiples of $\beta=\frac{\eps}{12q^2}$.

Suppose now that we are given a $\gamma$-signature $S$ of some $(f,\gamma)$-final (with the above define $f,\gamma$) partition $A$ of a graph $G$, so that $S$ has $t\geq s$ parts and $G$ has at least $N$ vertices. Define $\mathcal{S}_{T}$ to be the set of all signatures on up to $T$ parts that are extensions of $S$.
The algorithm checks if there are $S'\in \mathcal{S}_{T}$ and $C\in {\cal C}'_{{\cal H},T}$ satisfying $d_1(S',C)\le \alpha-\frac{\eps}{2}$. If there is such a pair, the algorithm says that case $(i)$ holds, otherwise it says that case $(ii)$ holds. We now prove the correctness of the algorithm.

\paragraph{Proof of first direction:} Suppose that $G'$ is some graph $(\alpha-\eps)$-close to $G$, and for which $\Pr_{G'}(\mathcal{H})\ge2/3$. We will show that the algorithm will declare that case $(i)$ holds. Recall that $A$ is an $(f,\g)$-final partition of $G$ into $t\ge s$ sets, and that $S$ is a $\g$-signature of $A$. By Lemma \ref{lem:resp_reg}, there exists a refinement $A'$ of $A$ into at most $T$ sets so that $A'$ is $\g_{0}$-FK-regular$^{\star}$ for both $G$ {\bf and} $G^{\prime}$. Moreover, denoting by $S'$ the corresponding extension of $S$ to $A'$, we have that $S'$ is a $\g_{0}$-signature of $A'$ with respect to $G$. Note that $S'\in\mathcal{S}_{T}$.
By the choice of $\g_{0}$ this implies that $A'$ is $\g_{\ref{lem:sample_with_sig_gen}}(q,1/12)$-FK-regular$^{\star}$ for both $G$ and $G^{\prime}$, and that $S'$ is a $\frac{1}{10}\eps$-signature of $A'$ with respect to $G$.
Let $C'$ be the $0$-signature of $A'$ over $G^{\prime}$. Lemma \ref{lem:sample_with_sig_gen} (using $A'$ and $G^{\prime}$) implies that $|\Pr_{G'}(\mathcal{H})-\Pr_{C'}(\mathcal{H})|<\frac{1}{12}$. Thus $\Pr_{C'}(\mathcal{H})\ge2/3-1/12\ge7/12$. Clearly, there is a signature $C$ so that all $C$'s weights are constant multiples of $\beta$, and $|c_{ij}-c'_{ij}|\le \beta$. Thus, by Lemma \ref{lem:graph_var_dist}, we infer that $\Pr_{C}(\mathcal{H})\ge7/12-1/12\ge1/2$
 so, $C\in{\cal C}'_{{\cal H},T}$. In addition, by Claim \ref{prop:dist_sig} (since $A'$ has at least $r_{\ref{prop:dist_sig}}(\frac{\eps}{10})$ sets and assuming that $n$ is large enough), $d_1(S',C)\le(\alpha-\frac{\eps}{2})$  on account of $G$ and $G^{\prime}$ being $(\alpha-\eps)$-close graphs. Thus, $S'$ and $C$ provide a witness that the procedure above accepts $G$.

\paragraph{Proof of second direction:}
Suppose the algorithm declares that case $(i)$ holds. We show that in this case, there is a graph $G'$, which is $\alpha$-close to $G$, and satisfies, $\Pr_{G'}(\mathcal{H})>1/3$.

Indeed, if the algorithm declared that case $(i)$ holds then there are signatures $S'\in\mathcal{S}_{T}$ and $C\in{\cal C}'_{{\cal H},T}$ satisfying $d_1(S',C)\le \alpha-\frac{\eps}{2}$. As $S'\in\mathcal{S}_{T}$, there is a refinement $A'$ of $A$, so that $S'$ is the extension of $S$ according to $A'$. Lemma \ref{lem:resp_reg} (regarding $A'$ as a possible refinement of $A$ with respect to $G$) asserts that $S'$ is a $\g_{0}$-signature of $A'$ (with respect to $G$), which by the choice of $\g_{0}$ means that it is a $\g_{\ref{lem:base_lemma}}(\min\{\frac{\eps}{2},\g_{\ref{lem:sample_with_sig_gen}}(q,\frac{1}{12})\})$-signature for $A'$ with respect to $G$.
Now, Lemma \ref{lem:base_lemma} (applied with $A'$ as the $\g_{0}$-FK-regular$^{\star}$ partition of $G$, and with $S'$ as $S$ and $C$ as $S'$) implies that there is a graph $G^{\prime}$ that is $(\alpha-\frac{\eps}{2}+\frac{\eps}{2})$-close to $G$, namely $\alpha$-close to $G$, and for which $C$ is a $\g_{\ref{lem:sample_with_sig_gen}}(q,\frac{1}{12})$-signature of $A'$, which in turn is $\g_{\ref{lem:sample_with_sig_gen}}(q,\frac{1}{12})$-FK-regular over $G^{\prime}$ . Lemma  \ref{lem:sample_with_sig_gen} implies that $|\Pr_{G'}(\mathcal{H})-\Pr_{C}(\mathcal{H})|<\frac{1}{12}$. Thus $\Pr_{G'}(\mathcal{H})\ge1/2-1/12>1/3$ as required. Hence we have found the required $G'$.
\end{proof}

We are ready to derive Theorem \ref{thm:main_gen} from Lemmas \ref{lem:final_part_sig} and \ref{lem:info_from_sig_gen}. The proof is similar
to the way we derived Theorem \ref{thm:main} from Lemmas \ref{lem:final_part_sig} and \ref{lem:info_from_sig}.

\begin{proof}[Proof (of Theorem \ref{thm:main_gen}):]
Suppose $\mathcal{P}$ is a testable graph property, and let $\alpha, \eps>0$ the constants for which we would like to $(\alpha,\eps)$-estimate $\mathcal{P}$. As $\mathcal{P}$ is $\eps/2$-testable, there is a testing algorithm that given $\eps/2$ and a graph $G$ makes $Q(\eps/2)$ queries. Lemma \ref{lem:cannon_tets} asserts us that there exists a constant $q=q_{\ref{lem:cannon_tets}}(\eps/2)=\text{poly}(Q(\eps/2))$ and a family of graphs $\mathcal{H}$ on $q$ vertices such that for every $G\in\mathcal{P},\Pr_G(\mathcal{H})\ge 2/3$, and for every graph $G$ that is $\eps/2$-far from satisfying $\mathcal{P}$, $\Pr_G(\mathcal{H})\le1/3$.
We thus set,
$$
\g=\gamma_{\ref{lem:info_from_sig_gen}}(q,\eps/2)=\mathrm{poly}(\eps\cdot2^{-q^2}),
$$
$$
f=f_{\ref{lem:info_from_sig_gen}}^{(q,\eps/2)}(x)=x\cdot2^{\mathrm{poly}\big(\frac{2^{q^2}}{\eps}\big)},
$$
$$
k=s_{\ref{lem:info_from_sig_gen}}(q,\eps/2)=\mathrm{poly}\Big(\frac{2^{q^2}}{\eps}\Big)
$$
Now, define $\zeta =  \eps\cdot 2^{-q^2}$ and apply the algorithm provided by Lemma \ref{lem:final_part_sig} with parameters $k,\zeta,\g,f$ on the input graph $G$. This algorithm makes up to $q_{\ref{lem:final_part_sig}}(k,\zeta)$ queries, and from the assumptions of the lemma, we have that
$$
q_{\ref{lem:final_part_sig}}(k,\zeta)=\text{poly}(k)2^{\text{poly}(1/\zeta)}=2^{\text{poly}(1/\eps)\cdot2^{\text{poly}(q)}}\;.
$$
With probability at least $2/3$ the algorithm returns a $\g$-signature of an $(f,\g)$-final equipartition of $G$ with at least $s$ and at most $T_{\ref{lem:final_part_sig}}(k,\g,f)$ sets.
We now apply the algorithm provided by Lemma \ref{lem:info_from_sig_gen} with parameters $q,\eps/2,\alpha-\eps/2$, to the signature $S$. Due to the choice of parameters, it is guaranteed by Lemma  \ref{lem:info_from_sig_gen} that we can distinguish between the case that there is a graph $G'$ that is $(\alpha-\eps)$-close to $G$ and for which $\Pr_{G'}(\mathcal{H})\ge 2/3$, and that $G$ is $(\alpha-\eps/2)$- far from any $G'$ for which $\Pr_{G'}(\mathcal{H})> 1/3$. In the first case $G$ is accepted, and in the second case it is rejected.
For the above to work we require $N\ge\max\{N_{\ref{lem:final_part_sig}}(k,\g,f),N_{\ref{lem:info_from_sig_gen}}(q,\frac{\eps}{2},T_{\ref{lem:final_part_sig}}(k,\g,f))\}$.
For a smaller $n$ we can just read the entire input and compute its distance from the property to be
estimated, with query complexity $N^2\le 2^{\text{poly}(1/\eps)\cdot2^{\text{poly}(q)}}$.
We now claim that the algorithm above is indeed an $(\alpha,\eps)$-estimation algorithm for $\mathcal{P}$ for every $n\ge N$.

If $G$ is $(\alpha-\eps)$-close to $\mathcal{P}$, then it is also $(\alpha-\eps)$-close to some graph $G'$ for which $\Pr_{G'}(\mathcal{H})\ge 2/3$. And so the first case above will hold as long as $S$ is in fact a $\g$-signature of an $(f, \g)$-final partition of $G$, which happens with probability at least $2/3$. Thus $G$ is accepted with probability at least $2/3$.

On the other hand, if $G$ is $\alpha$-far from $\mathcal{P}$, then by the triangle inequality it is $(\alpha-\eps/2)$-far from any $G'$ for which $\Pr_{G'}(\mathcal{H})> 1/3$. And so the second case above will hold as long as $S$ is in fact a $\g$-signature of an $(f, \g)$-final partition of $G$, which happens with probability at least $2/3$. Thus $G$ is rejected with probability at least $2/3$.
\end{proof}

\begin{proof}[Proof (of Proposition \ref{easygeneral}):]
Recall that we use $\text{dist}_\mathcal{P}(G)$ to denote the minimal number of edge additions/deletions one needs to perform in order to turn $G$ into a graph satisfying $\mathcal{P}$, normalized by $|V(G)|^2$.
If $\mathcal{P}$ is natural, then there is a positive sequence $\{\eps_t\}_{t=1}^{\infty}$ tending to zero,
so that for every $\eps_t$ there is a sequence of integers $\{n_k\}_{k=1}^{\infty}$
so that for every $k$ there is a graph $G_k$ on $n_k$ vertices satisfying $\eps_t \le \text{dist}_{\mathcal{P}}(G_k) \le 1$.
We will now show that for every $t$ and every $k$, every $\eps_t$-tester for ${\cal P}$
must make at least $\frac{1}{40\eps_t}$ edge queries when operating on $n_k$-vertex graphs.

Fix a $t$ as above. The key observation is that the graphs $G_k$ defined above can be further assumed to satisfy $\text{dist}_\mathcal{P}(G_k)\le 2\eps_t$.
Indeed, since for large enough $n_k$, we have that $\eps_t\cdot n_k^2\ge 1$, modifying one edge of $G_k$, changes $\text{dist}_\mathcal{P}(G)$ (additively) by at most $\eps_t$.
Hence, we can start with the graph $G_k$ which satisfies $\eps_t \le \text{dist}_\mathcal{P}(G) \le 1$, and then modify its edges one by one until we obtain a new graph $G_k^\prime$ satisfying $\eps_t \le \text{dist}_\mathcal{P}(G_k^{\prime})\le 2\eps_t$.
For each such $G_k$ on $n_k$ vertices, since $\text{dist}_\mathcal{P}(G_k)\le 2\eps_t$, there is a graph $H_k$ on $n_k$ vertices which satisfies ${\cal P}$ and such that $G_k\triangle H_k$ (the symmetric difference of the graphs) has at most $2\eps_t \cdot n^2_k$ edges. The key observation now is that an $\eps_t$-tester for ${\cal P}$ which makes less than
$\frac{1}{40\eps_t}$ edge queries has probability at most $1/10$ of querying one of the edges of $G_k\triangle H_k$.
Hence, the probability that it distinguishes between $G_k$ and $H_k$ is at most $1/10$, and so it cannot be an $\eps_t$-tester for ${\cal P}$.
\end{proof}

\bibliographystyle{plain}

\end{document}